\title[]{Semistable Higgs bundles and representations of algebraic fundamental groups: Positive characteristic case}
\author[Guitang Lan]{Guitang Lan}
\email{lan@uni-mainz.de}
\address{Institut f\"{u}r  Mathematik, Universit\"{a}t
Mainz, Mainz, 55099, Germany}
\author[Mao Sheng]{Mao Sheng}
\email{msheng@ustc.edu.cn}
\address{School of Mathematical Sciences,
University of Science and Technology of China, Hefei, 230026, China}
\author[Kang Zuo]{Kang Zuo}
\email{zuok@uni-mainz.de}
\address{Institut f\"{u}r  Mathematik, Universit\"{a}t
Mainz, Mainz, 55099, Germany}
\begin{document}
\theoremstyle{plain}
\newtheorem{thm}{Theorem}[section]
\newtheorem{theorem}[thm]{Theorem}
\newtheorem{lemma}[thm]{Lemma}
\newtheorem{corollary}[thm]{Corollary}
\newtheorem{proposition}[thm]{Proposition}
\newtheorem{addendum}[thm]{Addendum}
\newtheorem{variant}[thm]{Variant}
\theoremstyle{definition}
\newtheorem{lemma and definition}[thm]{Lemma and Definition}
\newtheorem{construction}[thm]{Construction}
\newtheorem{notations}[thm]{Notations}
\newtheorem{question}[thm]{Question}
\newtheorem{problem}[thm]{Problem}
\newtheorem{remark}[thm]{Remark}
\newtheorem{remarks}[thm]{Remarks}
\newtheorem{definition}[thm]{Definition}
\newtheorem{claim}[thm]{Claim}
\newtheorem{assumption}[thm]{Assumption}
\newtheorem{assumptions}[thm]{Assumptions}
\newtheorem{properties}[thm]{Properties}
\newtheorem{example}[thm]{Example}
\newtheorem{conjecture}[thm]{Conjecture}
\newtheorem{proposition and definition}[thm]{Proposition and Definition}
\numberwithin{equation}{thm}

\newcommand{\pP}{{\mathfrak p}}
\newcommand{\sA}{{\mathcal A}}
\newcommand{\sB}{{\mathcal B}}
\newcommand{\sC}{{\mathcal C}}
\newcommand{\sD}{{\mathcal D}}
\newcommand{\sE}{{\mathcal E}}
\newcommand{\sF}{{\mathcal F}}
\newcommand{\sG}{{\mathcal G}}
\newcommand{\sH}{{\mathcal H}}
\newcommand{\sI}{{\mathcal I}}
\newcommand{\sJ}{{\mathcal J}}
\newcommand{\sK}{{\mathcal K}}
\newcommand{\sL}{{\mathcal L}}
\newcommand{\sM}{{\mathcal M}}
\newcommand{\sN}{{\mathcal N}}
\newcommand{\sO}{{\mathcal O}}
\newcommand{\sP}{{\mathcal P}}
\newcommand{\sQ}{{\mathcal Q}}
\newcommand{\sR}{{\mathcal R}}
\newcommand{\sS}{{\mathcal S}}
\newcommand{\sT}{{\mathcal T}}
\newcommand{\sU}{{\mathcal U}}
\newcommand{\sV}{{\mathcal V}}
\newcommand{\sW}{{\mathcal W}}
\newcommand{\sX}{{\mathcal X}}
\newcommand{\sY}{{\mathcal Y}}
\newcommand{\sZ}{{\mathcal Z}}
\newcommand{\A}{{\mathbb A}}
\newcommand{\B}{{\mathbb B}}
\newcommand{\C}{{\mathbb C}}
\newcommand{\D}{{\mathbb D}}
\newcommand{\E}{{\mathbb E}}
\newcommand{\F}{{\mathbb F}}
\newcommand{\G}{{\mathbb G}}
\newcommand{\HH}{{\mathbb H}}
\newcommand{\I}{{\mathbb I}}
\newcommand{\J}{{\mathbb J}}
\renewcommand{\L}{{\mathbb L}}
\newcommand{\M}{{\mathbb M}}
\newcommand{\N}{{\mathbb N}}
\renewcommand{\P}{{\mathbb P}}
\newcommand{\Q}{{\mathbb Q}}
\newcommand{\R}{{\mathbb R}}
\newcommand{\SSS}{{\mathbb S}}
\newcommand{\T}{{\mathbb T}}
\newcommand{\U}{{\mathbb U}}
\newcommand{\V}{{\mathbb V}}
\newcommand{\W}{{\mathbb W}}
\newcommand{\X}{{\mathbb X}}
\newcommand{\Y}{{\mathbb Y}}
\newcommand{\Z}{{\mathbb Z}}
\newcommand{\id}{{\rm id}}
\newcommand{\rank}{{\rm rank}}
\newcommand{\END}{{\mathbb E}{\rm nd}}
\newcommand{\End}{{\rm End}}
\newcommand{\Hom}{{\rm Hom}}
\newcommand{\Hg}{{\rm Hg}}
\newcommand{\tr}{{\rm tr}}
\newcommand{\Cor}{{\rm Cor}}
\newcommand{\GL}{\mathrm{GL}}
\newcommand{\SL}{\mathrm{SL}}
\newcommand{\Aut}{\mathrm{Aut}}
\newcommand{\Sym}{\mathrm{Sym}}
\newcommand{\DD}{\mathbf{D}}
\newcommand{\EE}{\mathbf{E}}
\newcommand{\Gal}{\mathrm{Gal}}
\newcommand{\GSp}{\mathrm{GSp}}
\newcommand{\Spf}{\mathrm{Spf}}
\newcommand{\Spec}{\mathrm{Spec}}
\newcommand{\SU}{\mathrm{SU}}
\newcommand{\Res}{\mathrm{Res}}
\newcommand{\Rep}{\mathrm{Rep}}
\newcommand{\Span}{\mathrm{Span}}
\newcommand{\SOV}{\mathrm{SOV}}
\newcommand{\ord}{\mathrm{ord}}
\newcommand{\wt}{\mathrm{weight}}
\thanks{This work is supported by the SFB/TR 45 `Periods, Moduli
Spaces and Arithmetic of Algebraic Varieties' of the DFG, and
partially supported by the University of Science and Technology of
China.}

\maketitle

\begin{abstract}
Let $k$ be an algebraic closure of finite fields with odd
characteristic $p$ and a smooth projective scheme ${\bf X}/W(k)$.
Let ${\bf X}^0$ be its generic fiber and $X$ the closed fiber. For
${\bf X}^0$ a curve Faltings conjectured that semistable Higgs
bundles of slope zero over ${\bf X}^0_{\C_p}$ correspond to genuine
representations of the algebraic fundamental group of ${\bf
X}^0_{\C_p}$ in his $p$-adic Simpson correspondence \cite{Fa3}. This
paper intends to study the conjecture in the characteristic $p$
setting. Among other results, we show that isomorphism classes of
rank two semistable Higgs bundles with trivial chern classes over
$X$ are associated to isomorphism classes of two dimensional genuine
representations of $\pi_1({\bf X}^0)$ and the image of the
association contains all irreducible crystalline representations. We
introduce intermediate notions \emph{strongly semistable Higgs
bundles} and \emph{quasi-periodic Higgs bundles} between semistable
Higgs bundles and representations of algebraic fundamental groups.
We show that quasi-periodic Higgs bundles give rise to genuine
representations and strongly Higgs semistable are equivalent to
quasi-periodic. We conjecture that a Higgs semistable bundle is
indeed strongly Higgs semistable.
\end{abstract}

\section{Introduction}
N. Hitchin \cite{Hitchin} introduced rank two stable Higgs bundles
over a compact Riemann surface $X$ and showed that they correspond
naturally to irreducible representations of the fundamental group
$\pi_1(X)$ by solving a Yang-Mills equation, which generalizes the
earlier works by Donaldson, Uhlenbeck-Yau for polystable vector
bundles. Later C. Simpson obtained the full correspondence for any
polystable Higgs bundles over arbitrary dimensional complex
projective manifolds. In \cite{Fa3} G. Faltings established the
correspondence between Higgs bundles and generalized representations
of $\pi_1(X)$ over $p$-adic fields. He conjectured that semistable
Higgs bundles under his functor shall correspond to usual $p$-adic
representations of $\pi_1(X)$. In this paper we intend to study
Faltings's conjecture in the characteristic $p$ setting.\\
Let $k$ be the algebraic closure of finite fields of odd
characteristic $p$. Let ${\bf X}/W(k)$ be a smooth projective
$W:=W(k)$-scheme and $X/k$ its closed fiber. In this paper, if not
specified, a Higgs bundle over $X$ means a system of Hodge bundles
$$(E=\oplus_{i+j=n}E^{i,j},\theta=\oplus_{i+j=n}\theta^{i,j}),$$
where $E$ is a vector bundle over $X$, $\theta$ is a morphism of
$\sO_X$-modules satisfying $$\theta^{i,j}: E^{i,j}\to
E^{i-1,j+1}\otimes \Omega_{X}, \quad \quad \theta\wedge \theta=0.$$
For simplicity, we assume throughout that $n\leq p-2$. Fix an ample
divisor ${\bf H}\subset {\bf X}$ over $W$. The Higgs semistability
of $(E,\theta)$ is referred to the $\mu$-semistability with respect
to $H\subset X$, the reduction of ${\bf H}$.
\begin{theorem}[Corollary \ref{correspondence from crystalline represenations and HB_(0,f)} and Corollary \ref{quasi-periodic corresponds to representation}]
There is a functor from the category of quasi-periodic Higgs-de Rham
sequences of type $(e,f)$ to the category of crystalline
representations of $\pi_1({\bf X'}^0)$ into $\GL(\F_{p^f})$, where
${\bf X'}^0$ is the generic fiber of ${\bf X'}:={\bf
X}\times_{W}\sO_K$ for a totally ramified extension
$\mathrm{Frac}(W)\subset K$ with ramification index $e$. There is
also a functor in the opposite direction. These two functors are
equivalence of categories in the case $e=0$ and quasi-inverse to
each other.
\end{theorem}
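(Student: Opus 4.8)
The plan is to realize both functors through the Higgs--de Rham flow and then to identify the forward functor with the relative Fontaine--Laffaille(--Faltings) correspondence, whose essential image in our range of Hodge--Tate weights (recall the standing hypothesis $n\le p-2$) is exactly the crystalline representations. First I would treat the direction from quasi-periodic Higgs--de Rham sequences to representations. By definition such a sequence alternates the inverse Cartier transform $C^{-1}$, which sends a nilpotent graded Higgs bundle to a flat bundle with nilpotent $p$-curvature, with the grading functor $\mathrm{Gr}$ taken with respect to the Hodge filtration; after $f$ steps it returns to the initial object up to the twist recorded by $e$. Lifting this flow to the truncations $W_n(k)$ via the mod-$p^n$ inverse Cartier transform and passing to the limit, the $f$-fold iterate equips the associated filtered de Rham module with a Frobenius-semilinear automorphism, i.e. the structure of a Fontaine--Laffaille module. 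Its image under the Fontaine--Laffaille functor is a crystalline local system, and the $f$-step periodicity supplies a descent datum along the $f$-fold Frobenius, forcing the coefficients down to $\F_{p^f}$ and yielding the representation of $\pi_1({\bf X'}^0)$ into $\GL(\F_{p^f})$. The ramified base change to $\sO_K$ of ramification index $e$ is precisely what absorbs the twist in the quasi-periodicity, converting an ``up to twist'' period into an honest one after base change.

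For the opposite direction I would reverse each arrow. Beginning with a crystalline $\F_{p^f}$-representation of $\pi_1({\bf X'}^0)$, the quasi-inverse Fontaine--Laffaille functor attaches a filtered Frobenius module; applying $C^{-1}$ and $\mathrm{Gr}$ to its underlying graded object produces a nilpotent Higgs bundle, while the Frobenius structure unwinds into a length-$f$ Higgs--de Rham sequence whose quasi-periodicity is encoded by the same pair $(e,f)$. Here one must verify that the output is Higgs semistable of slope zero and lands in the correct category; this follows from the compatibility of $C^{-1}$ with the Hodge filtration together with the boundedness of the Hodge--Tate weights forced by $n\le p-2$.

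Finally, to obtain the equivalence when $e=0$, I would check that the two functors are mutually quasi-inverse. When $e=0$ there is no twist, so periodicity is honest, no ramified base change is needed, and both functors remain over $\mathrm{Frac}(W)$. That the two composites are naturally isomorphic to the respective identities reduces to two facts: the Fontaine--Laffaille functor is fully faithful onto the stated essential image, and $\mathrm{Gr}$ and $C^{-1}$ are mutually inverse on the relevant graded and filtered categories once the canonical Hodge filtration is used. The main obstacle I expect is not this formal bookkeeping but the integral input behind it: proving that the representation produced by an arbitrary quasi-periodic flow is genuinely crystalline rather than merely continuous $p$-adic, and conversely that every Fontaine--Laffaille output is automatically semistable as a Higgs bundle. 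Reconciling the ``up to twist'' periodicity on the Higgs side with honest crystalline periodicity --- controlling the parameter $e$ through the ramified base change --- is the delicate point, and it is exactly what confines the clean equivalence of categories to the case $e=0$.
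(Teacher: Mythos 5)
Your proposal diverges from what the statement actually asserts, and two of its central steps have genuine gaps. First, the statement concerns representations into $\GL(\F_{p^f})$, i.e.\ torsion coefficients, and the paper works entirely inside the \emph{strict $p$-torsion} Fontaine--Faltings category $\mathcal{MF}^{\nabla}_{[0,n]}$ (objects killed by $p$). Your plan to ``lift the flow to the truncations $W_n(k)$ via the mod-$p^n$ inverse Cartier transform and pass to the limit'' is both unnecessary and unjustified here: lifting a Higgs--de Rham flow to $W_n(k)$ requires lifting the Higgs terms and all the filtrations compatibly, which is a substantial problem in its own right (it is not carried out in this paper at all), and the limit would produce $\Z_p$-representations rather than the $\F_{p^f}$-representations in the statement. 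Second, even mod $p$, your mechanism for producing the Frobenius structure fails as stated: the $f$-fold iterate of $Gr\circ C_0^{-1}$ equips $H$ with a $\sigma^f$-semilinear map, not a $\sigma$-semilinear one, so it does \emph{not} give an object of $\mathcal{MF}^{\nabla}_{[0,n]}({\bf X}/W)$, and there is no ``descent datum along the $f$-fold Frobenius'' available in Faltings's framework to repair this. The paper's essential device at exactly this point is to pass to the direct sum $(G,\eta)=\bigoplus_{i=0}^{f-1}(E_i,\theta_i)$ with the cyclic isomorphism $\tilde\phi$ and the endomorphism $\iota(\xi)=m_\xi\oplus m_{\xi^p}\oplus\cdots$, turning an $f$-periodic sequence into a \emph{one-step} periodic object with $\F_{p^f}$-endomorphism structure (Proposition \ref{correspondence from HB_f and HB_(0,f)} and Lemma \ref{lemma from 0,f to f}); the coefficient field $\F_{p^f}$ then arises from this endomorphism structure under ${\bf D}$, not from any descent.

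Third, you misread the role of $e$: quasi-periodicity of type $(e,f)$ means the sequence becomes honestly periodic after $e$ initial steps (a pre-period), not that it returns to itself ``up to a twist,'' and the ramified base change does not ``absorb a twist.'' The actual mechanism in Section 4 is Faltings's category $\mathcal{MF}^{\nabla}_{[0,n]}({\bf X}_V/R_V)$ over the PD-hull $R_V$ of a totally ramified $V$ with index $e$: the pre-periodic filtrations $Fil_0,\dots,Fil_{e-1}$ are encoded through the filtration $Fil_{\sO_{\sX}}\otimes\lambda^*Fil_e$ on $\sX={\bf X}\times_W R_V/p$ via the maps $\mu,\lambda$, and the converse functor uses $\mu^*$ together with a lengthening that embeds $\mathcal{HB}_{(0,f)}$ into $\mathcal{HB}_{(e,f)}$. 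Finally, your reduction of the $e=0$ equivalence to ``the Fontaine--Laffaille functor is fully faithful and $Gr$, $C^{-1}$ are mutually inverse'' presupposes the hardest part of the paper's argument: given $(E,\theta,Fil,\phi)$, one must \emph{construct} the relative Frobenius $\Phi$ from $\phi$, which is done locally through the charts $\alpha_{({\bf U}_i,F_{{\bf U}_i})}$ of the exponential inverse Cartier transform, and one must verify strong $p$-divisibility, horizontality with respect to $\nabla$ (Lemma \ref{horizontal property}), and Taylor-formula compatibility on overlaps; none of this is formal bookkeeping, and your proposal contains no substitute for it. (Your worry about verifying Higgs semistability of the output is, by contrast, a red herring: the categories $\mathcal{HB}_{(e,f)}$ impose no semistability condition, which instead comes out a posteriori via Theorem \ref{quasiperiodic equivalent to strongly semistable}.)
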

Consequently, we obtain the following
\begin{corollary}[Corollary \ref{stable corresponds to irreducible}]
Under the above functors, there is one to one correspondence between
the isomorphism classes of irreducible crystalline
$\F_{p^f}$-representations of $\pi_1({\bf X}^0)$ and the isomorphism
classes of periodic Higgs stable bundles of period $f$.
\end{corollary}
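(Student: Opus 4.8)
The plan is to deduce the corollary from the $e=0$ case of the preceding theorem together with a compatibility between the two notions of ``subobject''. Since the functors there form an equivalence of categories between periodic Higgs bundles of period $f$ with trivial Chern classes and crystalline $\F_{p^f}$-representations of $\pi_1(\mathbf{X}^0)$ (the case in which $\mathbf{X'}=\mathbf{X}$, so that $\pi_1(\mathbf{X'}^0)=\pi_1(\mathbf{X}^0)$), they induce a bijection on isomorphism classes automatically. Thus the entire content of the corollary is the assertion that, under this bijection, the stable Higgs bundles match exactly the irreducible representations. First I would record the two inputs needed from the earlier parts of the paper: that a periodic Higgs bundle is $\mu$-semistable of slope $0$, and that the Higgs--de Rham flow---the inverse Cartier transform $C^{-1}$ followed by passage to the associated graded $\mathrm{Gr}$---is functorial and preserves both slope and $\mu$-semistability. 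The equivalence being realized by this flow, a monomorphism in the category of periodic Higgs bundles is precisely a proper nonzero \emph{periodic} sub-Higgs-bundle, and under the equivalence these correspond to subrepresentations.

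The easy direction is stable $\Rightarrow$ irreducible. If the associated representation $\rho$ were reducible, a proper nonzero subrepresentation would correspond under the equivalence to a proper nonzero periodic sub-Higgs-bundle $(F,\theta|_F)\hookrightarrow(E,\theta)$; being periodic it has trivial Chern classes, hence slope $\mu(F)=0=\mu(E)$, contradicting the stability of $(E,\theta)$, which forces every proper nonzero Higgs subsheaf to have slope strictly less than $0$. Therefore a stable periodic Higgs bundle has irreducible associated representation.

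The heart of the proof is the converse: irreducible $\Rightarrow$ stable. Suppose $(E,\theta)$ is periodic but not stable. Since it is semistable of slope $0$, there is a proper nonzero Higgs subsheaf $F\subset E$ with $\mu(F)=0$, and I must upgrade $F$ to a \emph{periodic} sub-Higgs-bundle in order to contradict irreducibility. The idea is to replace $F$ by a canonical slope-$0$ subobject and exploit functoriality of the flow. Concretely, consider the socle of $(E,\theta)$ in the abelian category of slope-$0$ $\mu$-semistable Higgs sheaves, i.e.\ its maximal polystable slope-$0$ sub-Higgs-sheaf; this is canonically attached to $(E,\theta)$. Because $C^{-1}$ and $\mathrm{Gr}$ preserve slope and semistability and are functorial, each step of the flow carries the socle of a bundle isomorphically onto the socle of its image; running the flow through a full period and using the periodicity isomorphism therefore shows that the socle is itself a periodic sub-Higgs-bundle. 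If the socle is proper we are done: it is a proper nonzero periodic sub-Higgs-bundle, contradicting irreducibility.

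The remaining obstacle---and the step I expect to require the most care---is the case in which the socle is all of $(E,\theta)$, that is, when $(E,\theta)$ is polystable of slope $0$ but not stable. Here one decomposes $(E,\theta)=\bigoplus_i (W_i,\theta_i)$ into stable slope-$0$ summands and analyses how the Frobenius-twist $\phi$ implicit in the period-$f$ structure permutes the isotypic pieces of this decomposition. A single orbit of this permutation, glued across the $f$ steps of the flow, assembles into a proper nonzero periodic sub-Higgs-bundle, again contradicting irreducibility; tracking the twist $\phi$ and verifying that such an orbit is strictly smaller than the whole bundle---which uses that $(E,\theta)$ is not stable, so there are at least two summands---is the genuinely delicate point. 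Combining the two directions shows that stability and irreducibility correspond under the bijection, completing the proof.
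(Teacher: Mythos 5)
There is a genuine gap, and it is precisely the point the paper's own proof is about. The equivalence of Corollary \ref{correspondence from crystalline represenations and HB_(0,f)} is between crystalline $\F_{p^f}$-representations of $\pi_1({\bf X}^0)$ and the category $\mathcal{HB}_{(0,f)}$ of periodic Higgs-de Rham \emph{sequences}, i.e.\ tuples $(E,\theta,Fil_0,\dots,Fil_{f-1},\phi)$ — not the category of bare periodic Higgs bundles. So your opening claim that the functors ``induce a bijection on isomorphism classes automatically'' does not yield the corollary as stated: a priori a single stable periodic Higgs bundle $(E,\theta)$ could carry several non-isomorphic periodic structures (different Griffiths-transverse filtrations $Fil_i$, different period isomorphisms $\phi$, even different periods $f$), each producing a non-isomorphic irreducible representation, and the proposed correspondence indexed by isomorphism classes of \emph{bundles} would not be well defined, let alone injective. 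The paper's proof consists almost entirely of closing this gap: the lemma immediately preceding the corollary shows that if $(E_0,\theta_0)$ is Higgs stable then its periodic Higgs-de Rham sequence is unique up to isomorphism — using Corollary 4.4 of \cite{SZ} to see that every term $(E_i,\theta_i)$ is again stable, the uniqueness of a Griffiths-transverse filtration whose grading is Higgs stable to pin down each $Fil_i$ inductively, and stability once more to see that $\phi$ is unique up to a scalar $\lambda\in k^*$, with $(E,\theta,Fil_0,\dots,Fil_{f-1},\phi)\cong(E,\theta,Fil_0,\dots,Fil_{f-1},\lambda\phi)$ in $\mathcal{HB}_{(0,f)}$. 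This also makes the period well defined. None of this appears in your proposal, and without it the statement you are proving is a different (and weaker) one.

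Separately, the key step of your ``irreducible $\Rightarrow$ stable'' direction is unjustified. You assert that each step of the flow ``carries the socle of a bundle isomorphically onto the socle of its image'' because $C_0^{-1}$ and $Gr$ ``preserve slope and semistability and are functorial.'' But $Gr_{Fil_i}\circ C_0^{-1}$ is not a functor on Higgs bundles alone — it depends on the chosen filtration — and while a slope-$0$ sub-Higgs-sheaf $F\subset E_i$ does induce, via the induced filtration on $C_0^{-1}F$ and Lemma \ref{degree formula}, a slope-$0$ subsheaf of $E_{i+1}$ of the same rank, the grading of the induced filtration on $C_0^{-1}$ of a \emph{stable} summand is merely a degree-$0$ subsheaf of a semistable degree-$0$ bundle: semistable, yes, but not necessarily stable. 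Hence the image of a polystable subsheaf need not be polystable, need not lie in the socle of the next term, and the socle need not be flow-invariant; your polystable-orbit case rests on the same unproved step. (Your easy direction — stable $\Rightarrow$ irreducible, via the observation that a subrepresentation corresponds to a periodic subobject, which has trivial Chern classes and hence slope $0$, contradicting stability — is sound, and is the part the paper leaves implicit; but the paper's route to the corollary is uniqueness of the periodic structure plus the categorical equivalence, not any socle analysis.)
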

The leading term of a quasi-periodic Higgs-de Rham sequence is a
quasi-periodic Higgs bundle. We show that
\begin{theorem}[Theorem \ref{quasiperiodic equivalent to strongly semistable}]
A quasi-periodic Higgs bundle is strongly Higgs semistable with
trivial chern classes. Conversely, A strongly Higgs semistable
bundle with trivial chern classes is quasi-periodic.
\end{theorem}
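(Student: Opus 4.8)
The plan is to prove the two implications separately: the forward one is essentially formal once the definitions are unwound, while the converse rests on a boundedness-plus-pigeonhole argument. Throughout write $\phi := \mathrm{Gr}\circ C^{-1}$ for one step of the Higgs-de Rham flow, namely the inverse Cartier transform $C^{-1}$ (available since the level satisfies $n\le p-2$) followed by passage to the graded bundle of the resulting Hodge-filtered de Rham bundle. Thus a quasi-periodic Higgs bundle is the leading term $(E,\theta)$ of a Higgs-de Rham sequence $(E_i,\theta_i)=\phi^i(E,\theta)$ whose terms are $\mu$-semistable and which is eventually periodic up to the prescribed Frobenius twist and ramified base change encoded in the type $(e,f)$, and \emph{strongly Higgs semistable} means that every iterate $(E_i,\theta_i)$ is $\mu$-semistable with respect to $H$.

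First I would dispatch the forward implication. Since by definition every term of a (quasi-periodic) Higgs-de Rham sequence is a $\mu$-semistable graded Higgs bundle, and the iterates $\phi^i(E,\theta)$ are exactly these terms, the leading term $(E,\theta)$ is strongly Higgs semistable. For the Chern classes I would use that $C^{-1}$ is built from a Frobenius pullback, hence multiplies $\mathrm{ch}_i$ by $p^i$, whereas passing to the graded bundle of a filtration leaves the Chern character unchanged; so one step of the flow scales $\mathrm{ch}_i$ by $p^i$. Periodicity then forces $\mathrm{ch}_i(E)=p^{if}\,\mathrm{ch}_i(E)$ for some $f\ge 1$, which for $i\ge 1$ is possible only if $\mathrm{ch}_i(E)=0$; thus $(E,\theta)$ has trivial Chern classes.

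The substance is the converse. Given $(E,\theta)$ strongly Higgs semistable with trivial Chern classes, strong semistability is precisely the statement that the flow may be iterated indefinitely within the semistable locus, producing an infinite sequence of $\mu$-semistable Higgs bundles $(E_i,\theta_i)=\phi^i(E,\theta)$. I would first note that triviality of the Chern classes propagates along the flow, since Frobenius pullback sends trivial Chern classes to trivial ones and grading preserves the Chern character, so every $(E_i,\theta_i)$ again has trivial Chern classes. The key external input is then Langer's boundedness theorem: semistable Higgs bundles of fixed rank with trivial Chern classes on a fixed polarised variety form a bounded family, i.e.\ are parametrised by a $k$-scheme of finite type. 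Because $X$ is defined over a finite field, all of its Frobenius twists $X^{(p^m)}$, on which the successive terms a priori live, are isomorphic to $X$, so the entire sequence may be viewed inside one such moduli space over $X$.

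Finally I would run the pigeonhole argument: infinitely many isomorphism classes $(E_i,\theta_i)$ cannot fit into a family of finite type over the field $k=\overline{\F}_p$, so $(E_i,\theta_i)\cong(E_j,\theta_j)$ for some $i<j$ after accounting for the appropriate twist; since $\phi$ is a functor, applying it repeatedly shows that the two tails of the sequence coincide, which is exactly eventual periodicity, i.e.\ quasi-periodicity, with $f=j-i$ the period and $e$ the ramification needed to realise the twist and any pre-period. The hard part will be the bookkeeping in this last step: identifying the Frobenius twists with $X$ so that all terms lie in one moduli space, tracking the filtered de Rham data (not merely the graded Higgs bundles) so that the repetition genuinely propagates under $\phi$, and matching the twist and base-change data thrown up by the repetition against the type $(e,f)$ in the definition of quasi-periodicity. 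The boundedness itself is Langer's, but its use here depends on the flow never leaving the locus of semistable bundles with vanishing Chern classes, which is guaranteed exactly by the hypothesis of strong Higgs semistability.
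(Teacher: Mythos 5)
Your forward implication begs the question. In the paper, a Higgs--de Rham sequence carries \emph{no} semistability requirement on its terms, and ``quasi-periodic'' only asks that the sequence eventually repeats; you have instead built $\mu$-semistability of every term into your definition of quasi-periodicity, which makes the forward direction a tautology precisely where it needs an argument. What must actually be shown is that the terms of a (quasi-)periodic sequence \emph{are} semistable, and the paper does this via the degree formula $\deg C_0^{-1}(F,\theta|_F)=p\deg F$ for an arbitrary Higgs subsheaf: a Higgs subbundle of some $(E_i,\theta_i)$ of positive degree $\lambda$ propagates through the flow to subbundles of degrees $p\lambda, p^2\lambda,\dots$, which is incompatible with the terms repeating up to isomorphism. (Your Chern-class scaling argument, $c_l(E_{i+1})=p^l c_l(E_i)$ plus periodicity, does match the paper.) A related structural misreading: the flow is not a self-map $\phi=\mathrm{Gr}\circ C^{-1}$, because the filtration $Fil_i$ at each step is \emph{chosen} data subject only to Griffiths transversality, so $(E_{i+1},\theta_{i+1})$ is not determined by $(E_i,\theta_i)$; this matters below.

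In the converse your pigeonhole fails as stated: a finite-type moduli space over $k=\overline{\F}_p$ has infinitely many $k$-points (think of $\mathrm{Pic}^0$ of a curve), so Langer's boundedness alone does not force a repetition. The paper's fix is that $C_0^{-1}$ and $Gr_{Fil_i}$ do not enlarge the field of definition, so if the leading term and filtrations are defined over a finite field $k''$ then the whole sequence lies in the finite set of $k''$-rational points of $M_{r,ss}(X)$. Two further steps you omit are essential. First, a coincidence of points in $M_{r,ss}(X)$ gives only $S$-equivalence, i.e.\ isomorphic Jordan--H\"older gradings, not an isomorphism of Higgs bundles; the paper upgrades this by transporting JH filtrations along the flow and using that extensions between stable pieces over a finite field fall into finitely many isomorphism classes, producing a genuine isomorphism $(E_e,\theta_e)\cong(E_{e+f},\theta_{e+f})$ after possibly enlarging $e$, $f$ and the field. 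Second, because the filtrations are chosen, this isomorphism does not make the \emph{given} sequence periodic: one must transport $Fil_e$ through $C_0^{-1}$ of the isomorphism to a new filtration $Fil'_{e+f}$ on $H_{e+f}$ and redefine the tail of the sequence accordingly, obtaining a new, genuinely quasi-periodic sequence with the same leading term. You flag this filtered bookkeeping as ``the hard part,'' but your deterministic-$\phi$ framework makes the problem invisible rather than solving it. (Also, no Frobenius-twist identification $X^{(p^m)}\cong X$ is needed: the paper's flow already lives on the fixed $X/k$.)
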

Strongly semistable vector bundles are strongly semistable Higgs
bundles with trivial Higgs fields. As a semistable bundle need not
be strongly semistable, the notion of strongly semistability should
be replaced by the strongly Higgs semistability. The next result
supports our viewpoint.
\begin{theorem}[Theorem \ref{rank two semistable implies strongly semistable}]
A rank two semistable Higgs bundle is strongly Higgs semistable.
\end{theorem}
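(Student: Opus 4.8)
The plan is to prove the statement one step of the Higgs--de Rham flow at a time and then iterate. A Higgs bundle $(E,\theta)$ is strongly Higgs semistable precisely when every term of every associated Higgs--de Rham sequence is Higgs semistable; since the flow is built by alternately applying the inverse Cartier transform $C^{-1}$ and taking the associated graded $\mathrm{gr}$ of a Griffiths-transverse (Hodge) filtration, it suffices, by induction on the number of steps, to show that for a rank two Higgs semistable $(E,\theta)$ the one-step output $(E_1,\theta_1):=\mathrm{gr}\circ C^{-1}(E,\theta)$ is again Higgs semistable. Rank, total degree and slope are preserved by one step (grading preserves the total degree, and $C^{-1}$ is slope compatible), so $(E_1,\theta_1)$ is again rank two of the same slope $\mu:=\mu(E)$.

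First I would reduce Higgs semistability of $(E,\theta)$ to a statement purely about the flat bundle in the middle of the step. By the earlier construction $C^{-1}$ is an exact equivalence from graded Higgs bundles to filtered flat bundles (nilpotent of level $\le p-2$), and it matches sub-objects: Higgs subsheaves of $(E,\theta)$ correspond bijectively to $\nabla$-invariant subsheaves of $(H,\nabla):=C^{-1}(E,\theta)$, the degrees being compared through Cartier descent, where a Frobenius pullback multiplies degree by $p$. Hence Higgs semistability of $(E,\theta)$ is equivalent to flat semistability of $(H,\nabla)$, namely the absence of a $\nabla$-invariant subsheaf of slope $>\mu$. Thus the problem becomes: given that $(H,\nabla)$ carries no $\nabla$-invariant destabilizing subsheaf, show that its associated graded $(E_1,\theta_1)$ carries no $\theta_1$-invariant destabilizing subsheaf.

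The heart of the argument is this grading step, and here the rank two hypothesis is essential. Suppose $(E_1,\theta_1)$ were not semistable; being rank two it would then possess a $\theta_1$-invariant line subbundle $M\subset E_1=\mathrm{gr}_{\mathrm{Fil}}H$ with $\deg M>\mu$. Writing the two graded pieces as $\mathrm{gr}^0=H/\mathrm{Fil}^1$ and $\mathrm{gr}^1=\mathrm{Fil}^1$, I would run the short case analysis dictated by the position of $M$ and by whether $\theta_1=\mathrm{gr}(\nabla)$ vanishes. In each case the goal is to manufacture from $M$ a genuine $\nabla$-invariant subsheaf $N\subset H$ with $\deg N\ge \deg M>\mu$, which contradicts the flat semistability established above. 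The point that makes this feasible in rank two is that $M$ is a line bundle and the Hodge filtration has a single jump, so the failure of $\nabla$-invariance of the preimage of $M$ is governed by a single rank one second fundamental form; Griffiths transversality together with $\theta_1$-invariance of $M$ pins this form down, and saturating produces the desired $N$, the relevant degrees being controlled by a single inequality involving $\deg\Omega_X$ and $p$.

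The main obstacle is exactly this lifting across $\mathrm{gr}$. Unlike $C^{-1}$, the grading functor is not an equivalence, so a $\theta_1$-invariant subbundle of $E_1$ need not descend from a $\nabla$-invariant subbundle of $H$; a priori the grading could fabricate a Higgs destabilizer with no flat counterpart. This is precisely the phenomenon behind the failure of strong semistability for ordinary vector bundles under Frobenius pullback, where the instability of $F^{*}E$ has no $\nabla$-invariant shadow. The resolution I anticipate is that the connection $\nabla$ records exactly the correction needed, and in rank two the bookkeeping closes: the estimate must be arranged so that flat semistability of $(H,\nabla)$ forces $\deg M\le \mu$, contradicting $\deg M>\mu$. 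Once one step is secured, induction along the flow yields that every term stays semistable, that is, $(E,\theta)$ is strongly Higgs semistable.
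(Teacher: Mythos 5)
Your proposal has the right skeleton---one step of the flow plus iteration, and the observation that $C_0^{-1}$ matches Higgs subsheaves with $\nabla$-invariant subsheaves with degrees multiplied by $p$, so that Higgs semistability of $(E,\theta)$ is equivalent to the absence of a $\nabla$-invariant destabilizer in $(H,\nabla)=C_0^{-1}(E,\theta)$---but the central step is missing, and in the form you state it it cannot be repaired. First, you never specify the filtration, yet strong Higgs semistability only asks for \emph{some} Griffiths-transverse filtration whose graded terms are semistable; for an arbitrary choice your one-step claim is false. For instance, take as $Fil^1$ any line subbundle $L\subset H$ of very negative degree (Griffiths transversality is automatic for a two-step filtration): in $Gr(H,\nabla)=L\oplus H/L$ the induced Higgs field kills the piece $H/L$, so $H/L$ is a $\theta_1$-invariant subsheaf of degree $\deg H-\deg L\gg \mu(H)$, a Higgs destabilizer with no flat counterpart whatsoever. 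So the ``lifting across $Gr$'' that you flag as the main obstacle is not merely hard, it is unavailable for general filtrations, and no inequality in $\deg\Omega_X$ and $p$ enters the actual proof. (Also, a small inaccuracy: one step multiplies the slope by $p$; it is preserved only when $\deg E=0$, though this is harmless for semistability.)

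The paper resolves both points by making the one choice that works and then arguing directly on the graded side, never lifting anything. It takes $Fil$ to be the Harder--Narasimhan filtration of $H$ (trivial if $H$ is semistable, in which case the graded is $(H,0)$ and one is done). If $H$ is unstable with HN sequence $0\to L_1\to H\to L_2\to 0$, the key Claim is that $L_1$ is \emph{not} $\nabla$-invariant, proved by a $p$-curvature argument absent from your sketch: if $\theta\neq 0$, the $p$-curvature of $\nabla$ is nilpotent and nonzero; any rank-one $\nabla$-invariant subsheaf has zero $p$-curvature; $L_1':=C_0^{-1}(E^{0,1},0)$ is a $\nabla$-invariant rank-one subsheaf of zero $p$-curvature which has negative degree on a generic curve, hence differs from $L_1$; were $L_1$ also invariant, then generically $H=L_1\oplus L_1'$ would have zero $p$-curvature, a contradiction (and if $\theta=0$, invariance of $L_1$ would by Cartier descent produce a destabilizing $G_1\subset E$). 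Consequently the second fundamental form $\theta'=Gr_{HN}\nabla\colon L_1\to L_2\otimes\Omega_X$ is nonzero, and then any Higgs sub-line bundle $L\subset L_1\oplus L_2$ satisfies $\theta'|_L=0$, maps to zero in $L_1$, injects into $L_2$, and has $\deg L\le\deg L_2<\mu$; so the graded is even Higgs stable. Iterating with the HN filtration at each step gives the strongly semistable sequence. The two ideas your proposal lacks are precisely (i) the HN choice of filtration and (ii) the $p$-curvature mechanism ruling out $\nabla$-invariance of $L_1$; without them your plan remains an unproved (and, as stated, false) lifting principle.
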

We would like to make the following
\begin{conjecture}
A semistable Higgs bundle is strongly Higgs semistable.
\end{conjecture}
As an application of the above results, we obtain the following
\begin{corollary}[Theorem \ref{rank two semistable bundle corresponds to
rep}] Any isomorphism class of rank two semistable Higgs bundles
with trivial chern classes over $X$ is associated to an isomorphism
class of crystalline representations of $\pi_1({\bf X}^0)$ into
$\GL_2(k)$. The image of the association contains all irreducible
crystalline representations of $\pi_1({\bf X}^0)$ into $\GL_2(k)$.
\end{corollary}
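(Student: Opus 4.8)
The plan is to chain the four results quoted above. Starting from a rank two semistable Higgs bundle $(E,\theta)$ with trivial Chern classes over $X$, Theorem \ref{rank two semistable implies strongly semistable} shows it is strongly Higgs semistable, and then the second half of Theorem \ref{quasiperiodic equivalent to strongly semistable} shows it is quasi-periodic; that is, $(E,\theta)$ is the leading term of a quasi-periodic Higgs-de Rham sequence of some type $(e,f)$. Applying the functor of Corollary \ref{quasi-periodic corresponds to representation} (the functor also appearing in Corollary \ref{correspondence from crystalline represenations and HB_(0,f)}) to this sequence produces a crystalline representation of $\pi_1({\bf X'}^0)$ into $\GL_2(\F_{p^f})$. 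Composing with the inclusion $\GL_2(\F_{p^f})\hookrightarrow\GL_2(k)$ induced by $\F_{p^f}\subset k$ then gives a $\GL_2(k)$-valued representation, and this yields the first assertion once the fundamental group is normalized as discussed below.

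For the converse assertion, let $\rho\colon\pi_1({\bf X}^0)\to\GL_2(k)$ be an irreducible crystalline representation. Since crystalline representations in the present sense take values in a finite group $\GL_2(\F_{p^{f'}})$, the image of $\rho$ is finite; a finite subgroup of $\GL_2(k)=\GL_2(\overline{\F}_p)$ is conjugate into $\GL_2(\F_{p^f})$ for a suitable $f$. Thus $\rho$ is, up to isomorphism, an irreducible crystalline $\F_{p^f}$-representation of $\pi_1({\bf X}^0)$, so Corollary \ref{stable corresponds to irreducible} attaches to it a periodic Higgs stable bundle of period $f$. Such a bundle is in particular rank two semistable with trivial Chern classes (the vanishing of the Chern classes is part of Theorem \ref{quasiperiodic equivalent to strongly semistable}), and by the one-to-one correspondence of Corollary \ref{stable corresponds to irreducible} the association of the first part sends it back to $\rho$. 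Hence $\rho$ lies in the image, and here no base change intervenes because periodicity is exactly the case $e=0$.

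The main obstacle is the discrepancy between $\pi_1({\bf X'}^0)$ and $\pi_1({\bf X}^0)$ in the genuinely quasi-periodic case $e>0$: the functor of Corollary \ref{quasi-periodic corresponds to representation} naturally produces a representation of the fundamental group of the base-changed generic fiber ${\bf X'}^0={\bf X}^0\times_{\mathrm{Frac}(W)}K$, with $K/\mathrm{Frac}(W)$ totally ramified of ramification index $e$, whereas the statement demands a representation of $\pi_1({\bf X}^0)$ itself. Since the equivalence of Corollary \ref{correspondence from crystalline represenations and HB_(0,f)} is asserted only for $e=0$, the delicate point is to descend the representation along the finite-index open inclusion $\pi_1({\bf X'}^0)\hookrightarrow\pi_1({\bf X}^0)$, exploiting that $K$ is totally ramified so that the residue field, and hence the geometric monodromy, is unchanged; equivalently one must show that a rank two strongly Higgs semistable bundle can be realized after a controlled modification as a genuinely periodic one over ${\bf X}$. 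By contrast, the Chern class vanishing, the coefficient extension $\F_{p^f}\hookrightarrow k$, and the passage between stability and semistability are immediate, so it is precisely this descent that requires the real work.
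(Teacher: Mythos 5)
Your reduction to quasi-periodicity (Theorem \ref{rank two semistable implies strongly semistable} followed by Theorem \ref{quasiperiodic equivalent to strongly semistable}) matches the paper, and your treatment of the second statement via Corollary \ref{stable corresponds to irreducible} is essentially the paper's. But for the first statement there is a genuine gap: you route the quasi-periodic sequence through Corollary \ref{quasi-periodic corresponds to representation}, which only yields a representation of $\pi_1({\bf X'}^0)$ for a ramified base change, and you then correctly observe that one must get back to $\pi_1({\bf X}^0)$ --- yet the ``descent along the finite-index open inclusion $\pi_1({\bf X'}^0)\hookrightarrow\pi_1({\bf X}^0)$'' that you flag as the real work is left entirely undone, and it is not a routine step: restricting to an open subgroup loses information and extending a representation from it is obstructed in general, so ``$K$ totally ramified'' alone does not produce the descent. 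The paper never faces this problem, because its association is defined differently: it uses that the quasi-periodic sequence built in the proof of Theorem \ref{rank two semistable implies strongly semistable} has \emph{canonical} filtrations (the Harder--Narasimhan ones), discards the pre-period, and attaches to $(E,\theta)$ the object $\bigl((Gr_{HN}\circ C_0^{-1})^e(E,\theta),Fil_0=HN,\cdots,Fil_{f-1}=HN,\phi\bigr)$ of $\mathcal{HB}_{(0,f)}$ --- i.e.\ a genuinely \emph{periodic} sequence over ${\bf X}$ itself --- to which the $e=0$ equivalence of Corollary \ref{correspondence from crystalline represenations and HB_(0,f)} applies directly, giving a representation of $\pi_1({\bf X}^0)$ with no base change. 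Since the theorem only claims an \emph{association} of isomorphism classes, it is legitimate to send $(E,\theta)$ to the representation of its periodic tail; your own parenthetical remark (``realize after a controlled modification a genuinely periodic one'') gestures at this, but you treat it as an open problem rather than carrying it out.

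You also omit the second substantive ingredient: well-definedness on isomorphism classes. The periodic structure involves a choice of isomorphism $\phi:(E_{e+f},\theta_{e+f})\cong(E_e,\theta_e)$, and different choices could a priori produce non-isomorphic representations; the semistable (non-stable) case has no uniqueness lemma available. The paper resolves this with Assumption \ref{assumption on filtration} (which the $HN$ filtrations satisfy, being preserved by every automorphism of the flat bundles), Lemma \ref{finiteness implies periodic}, and Proposition \ref{phi plays no role}, concluding that the class of $\rho\otimes k$ is independent of $\phi$ --- after a lengthening, which by Proposition \ref{operations on Higgs-de Rham sequences}(ii) corresponds to extension of coefficients. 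This is precisely why the target is $\GL_2(k)$ rather than $\GL_2(\F_{p^f})$; in your proposal the extension $\F_{p^f}\hookrightarrow k$ is dismissed as ``immediate,'' whereas in the paper it is forced by, and inseparable from, the independence-of-$\phi$ argument.
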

The plan of our paper is arranged as follows: in Section 2 we
introduce the notions \emph{strongly Higgs semistable bundles} which
generalizes the notion of strongly semistable vector bundles in the
paper \cite{LS} of Lange-Stuhler and \emph{quasi-periodic Higgs
bundles} which generalizes the notion of periodic Higgs subbundles
introduced in \cite{SZ}. We show that a strongly Higgs semistable
with trivial chern classes is equivalent to a quasi-periodic Higgs
bundle, and a rank two semistable Higgs bundle is strongly Higgs
semistable. We conjecture that semistable Higgs bundles of arbitrary
rank are strongly Higgs semistable. In Section 3 we show in Theorem
\ref{correspondence in the type (0,f) case} that there is a one to
one correspondence between the strict $p$-torsion category
$\mathcal{MF}^{\nabla}_{[0,n],f}({\bf X}/W)$ of Faltings with
endomorphism $\F_{p^f}$ and the category of periodic Higgs-de Rham
sequences of type $(0,f)$. In Section 4, we extend the construction
for periodic Higgs bundles to quasi-periodic Higgs bundles. In
Section 5, we give some complements and applications of the above
theory.\\
{\bf Acknowledgements:} Arthur Ogus has recently pointed to us that
the inverse Cartier transform in the paper \cite{OV} for the
nilpotent Higgs bundles coincides with the construction in
\cite{LSZ}. Christopher Deninger has drawn our attention to the work
\cite{Langer}, and Adrian Langer has helped us understanding
\cite{Langer}. We thank them heartily.

\section{Strongly semistable Higgs bundles}
In this paper, a vector bundle over $X$ means a torsion free
coherent sheaf of $\sO_X$-module.
A Higgs-de Rham sequence over $X$ is a sequence of form
$$
\xymatrix{
                &  (H_0,\nabla_0)\ar[dr]^{Gr_{Fil_0}}       &&  (H_1,\nabla_1)\ar[dr]^{Gr_{Fil_1}}    \\
 (E_0,\theta_0) \ar[ur]^{C_0^{-1}}  & &     (E_1,\theta_1) \ar[ur]^{C_0^{-1}}&&\ldots       }
$$
In the sequence, $C_0^{-1}$ is the inverse Cartier transform
constructed in \cite{OV} (see also \cite{LSZ}). A. Ogus remarked
that the exponential twisting of \cite{LSZ} is equivalent to the
more general construction in \cite{OV} and the equivalence is
implicitly implied by Remark 2.10 loc. cit.. $Fil_i$ is a decreasing
filtration on $H_i$ with the property $Fil_i^0=H_i$ and
$Fil_i^{n+1}=0$ and such that $\nabla_i$ obeys the Griffiths
transversality with respect to it.
\begin{definition}
A Higgs bundle $(E,\theta)$ is called strongly Higgs semistable if
it appears in the leading term of a Higgs-de Rham sequence whose
Higgs terms $(E_i,\theta_i)$s are all Higgs semistable.
\end{definition}
Recall that \cite{LS} a vector bundle $E$ is said to be strongly
semistable if $F_X^{*n}E$ is semistable for all $n\in \N$. Clearly,
a strongly semistable vector bundle $E$ is strongly Higgs
semistable: one takes simply the Higgs-de Rham sequence as
$$
\xymatrix{
                &  (F_X^*E,\nabla_{can})\ar[dr]^{Gr_{Fil_{tr}}}       &&  (F_X^{*2}E,\nabla_{can})\ar[dr]^{Gr_{Fil_{tr}}}    \\
 (E_0,0) \ar[ur]^{C_0^{-1}}  & &     (F_X^*E,0) \ar[ur]^{C_0^{-1}}&&\ldots       }
$$
where $\nabla_{can}$ is the canonical connection in the theorem of
Cartier descent and $Fil_{tr}$ is the trivial filtration. 
\begin{definition}
A Higgs bundle $(E,\theta)$ is called periodic if it appears in the
leading term of a periodic Higgs-de Rham sequence, that is, there
exists a natural number $f$ such that there is an isomorphism of
Higgs bundles
$$
(E_{f},\theta_f)\cong (E_0,\theta_0),
$$
which via $C_0^{-1}$ induces inductively a filtered isomorphism of
de Rham bundles
$$(H_{f+i},\nabla_{f+i},Fil_{f+i})\cong
(H_{i},\nabla_{i},Fil_{i}),$$ and hence also an isomorphism of Higgs
bundles for all $i\in \N$,
$$
(E_{f+i},\theta_{f+i})\cong (E_i,\theta_i).
$$
\end{definition}
The minimal number $f\geq 1$ is called the period of the sequence.
One understands a periodic Higgs-de Rham sequence of period $f$
through the following diagram:
$$
\xymatrix{
                &  (H_0,\nabla_0)\ar[dr]^{Gr_{Fil_0}}       &&  (H_{f-1},\nabla_{f-1})\ar[dr]^{Gr_{Fil_{f-1}}}    \\
 (E_0,\theta_0) \ar[ur]^{C_0^{-1}}  & &    \cdots   \ar[ur]^{C_0^{-1}}&&  (E_f,\theta_f)\ar@/^2pc/[llll]^{\cong}
 }
$$
In general, we make the following
\begin{definition}
A Higgs bundle $(E,\theta)$ is called quasi-periodic if it appears
in the leading term of a quasi-periodic Higgs-de Rham sequence,
i.e., it becomes periodic after a nonnegative integer $e\geq 0$.
\end{definition}
We add a simple lemma which follows directly from the construction
of $C_0^{-1}$ via the exponential function \cite{LSZ}.
\begin{lemma}\label{degree formula}
Let $(E,\theta)$ be a nilpotent Higgs bundle (not necessary a system
of Hodge bundles) with exponent $\leq p-1$. It holds that $\det
C_0^{-1}(E,\theta)=F_{X}^*\det E$. Consequently, $$\deg
C_0^{-1}(E,\theta)=p\deg E.$$
\end{lemma}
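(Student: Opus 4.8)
The plan is to read the determinant off directly from the local shape of the inverse Cartier transform, exploiting that the exponential of a nilpotent endomorphism is unipotent and hence has determinant one. First I would recall the exponential-twisting construction of $C_0^{-1}$ from \cite{LSZ}. Fixing an affine open cover $\{U_\alpha\}$ of the $W_2(k)$-lift of $X$ together with local Frobenius liftings $\tilde F_\alpha$, the underlying vector bundle of $C_0^{-1}(E,\theta)$ is obtained by gluing the local pieces $F_X^*E|_{U_\alpha}$ along the overlaps $U_\alpha\cap U_\beta$ by transition automorphisms of the form $\exp(\eta_{\alpha\beta})$, where $\eta_{\alpha\beta}$ is a nilpotent local endomorphism of $F_X^*E$ assembled from the pulled-back Higgs field and the comparison datum $\tfrac{1}{p}(d\tilde F_\alpha-d\tilde F_\beta)$ between the two Frobenius liftings. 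The hypothesis that $\theta$ be nilpotent of exponent $\leq p-1$ enters twice: it forces the defining series to be the finite sum $\sum_{k=0}^{p-1}\eta_{\alpha\beta}^{k}/k!$, in which each denominator $k!$ with $k\leq p-1$ is invertible, so that $\exp(\eta_{\alpha\beta})$ is a genuine automorphism; and it guarantees that each $\eta_{\alpha\beta}$ is itself nilpotent.

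Second I would take determinants of the resulting gluing cocycle. For a nilpotent endomorphism $N$ one has $\det\exp(N)=\exp(\tr N)=1$, since $\tr N=0$. Applying this with $N=\eta_{\alpha\beta}$ shows that the induced cocycle $\{\det\exp(\eta_{\alpha\beta})\}$ on determinant line bundles is trivial, so the transition data of $\det C_0^{-1}(E,\theta)$ agree with those of $\det F_X^*E=F_X^*\det E$. Hence $\det C_0^{-1}(E,\theta)\cong F_X^*\det E$. Alternatively, if one grants that the inverse Cartier transform is compatible with exterior powers, then $\det C_0^{-1}(E,\theta)=C_0^{-1}(\det E,\tr\theta)$, and $\tr\theta=0$ because $\theta$ is nilpotent, so the right-hand side is simply the Frobenius-descent bundle $F_X^*\det E$; this gives the same conclusion without unwinding the cocycle.

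Finally, the degree formula is immediate. For the Frobenius $F_X$ on $X/k$ one has $F_X^*L\cong L^{\otimes p}$ for any line bundle $L$ (the transition functions are raised to the $p$-th power), so $c_1(F_X^*\det E)=p\,c_1(\det E)$ and therefore, computing degrees with respect to the fixed polarization $H$, $\deg C_0^{-1}(E,\theta)=\deg(F_X^*\det E)=p\deg\det E=p\deg E$.

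I expect the only genuinely non-formal step to be the first one, namely pinning down the precise local form of the gluing automorphisms from the definition of $C_0^{-1}$; once the transitions are known to differ from those of $F_X^*E$ by exponentials of nilpotent endomorphisms, the determinant and degree statements follow formally from $\det\exp(N)=\exp(\tr N)$ and the behaviour of degree under Frobenius. The point to watch is that the exponent bound $\leq p-1$ is exactly what makes the exponential simultaneously well-defined and unipotent, which is where the whole argument lives.
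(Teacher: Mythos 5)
Your proof is correct and is essentially the paper's own argument spelled out in full: the paper's one-line proof says that in the determinant the exponential twisting appearing in the construction of $C_0^{-1}(E,\theta)$ is simply the identity, which is exactly your observation that the gluing automorphisms $\exp(\eta_{\alpha\beta})$ are unipotent (determinant $1$, which in characteristic $p$ is best seen directly from unipotence rather than via $\det\exp(N)=\exp(\tr N)$), so that $\det C_0^{-1}(E,\theta)$ carries the transition data of $F_X^*\det E$. The degree formula then follows, as you say, from $F_X^*L\cong L^{\otimes p}$ for line bundles.
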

\begin{proof}
It follows from the fact that in the determinant, the exponential
twisting appeared in the construction of $C_0^{-1}(E,\theta)$ is
simply the identity.
\end{proof}
\begin{theorem}\label{quasiperiodic equivalent to strongly
semistable} A quasi-periodic Higgs bundle is strongly Higgs
semistable with trivial chern classes. Conversely, a strongly Higgs
semistable bundle with trivial chern classes is quasi-periodic.
\end{theorem}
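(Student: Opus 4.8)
The plan is to prove the two implications separately, using Lemma \ref{degree formula} as the numerical engine throughout.

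\emph{Quasi-periodic $\Rightarrow$ strongly Higgs semistable with trivial Chern classes.} Suppose $(E_0,\theta_0)$ leads a quasi-periodic Higgs-de Rham sequence that is periodic after $e$ steps with period $f$. First I would pin down degrees: since $Gr_{Fil_i}$ preserves degree and $\deg H_i=\deg C_0^{-1}(E_i,\theta_i)=p\deg E_i$ by Lemma \ref{degree formula}, one gets $\deg E_{i+1}=p\deg E_i$, hence $\deg E_i=p^i\deg E_0$; periodicity gives $\deg E_{e+f}=\deg E_e$, forcing $\deg E_0=0$. The key step is that every Higgs term is semistable. If some $(E_i,\theta_i)$ were not, let $V\subset E_i$ be its maximal destabilizing Higgs subsheaf, so $\mu(V)=\mu_{\max}(E_i)>0$. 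Because $C_0^{-1}$ is an exact functor carrying the Higgs subsheaf $V$ to a $\nabla$-stable subsheaf $C_0^{-1}(V)\subset H_i$ and multiplying its degree by $p$ (Lemma \ref{degree formula}), while grading the Griffiths-transverse subsheaf $C_0^{-1}(V)$ yields a $\theta_{i+1}$-invariant subsheaf of $E_{i+1}$ of the same degree and rank, one obtains $\mu_{\max}(E_{i+1})\ge p\,\mu_{\max}(E_i)$. Iterating gives $\mu_{\max}(E_{i+k})\ge p^k\mu(V)\to\infty$, contradicting the finiteness of $\{\mu_{\max}(E_j)\}_j$ forced by periodicity. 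Hence $\mu_{\max}(E_i)\le 0=\mu(E_i)$ for all $i$, i.e. every term is semistable, which is exactly strong Higgs semistability. The same growth argument applied to the Bogomolov discriminant finishes the Chern classes: semistability and $c_1=0$ give $\Delta(E_i)\cdot H^{d-2}\ge 0$, the discriminant is multiplied by a positive power of $p$ at each step, and periodicity forces it to vanish, so all Chern classes are trivial.

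\emph{Strongly Higgs semistable with trivial Chern classes $\Rightarrow$ quasi-periodic.} Fix a Higgs-de Rham sequence with leading term $(E_0,\theta_0)$, all Higgs terms semistable, and trivial Chern classes. Propagating triviality forward is now easy, since the operations multiply Chern classes by powers of $p$ and $0\mapsto 0$: by Lemma \ref{degree formula} $c_1(E_{i+1})=c_1(H_i)=0$, and likewise the higher classes vanish, so all $(E_i,\theta_i)$ are semistable of fixed rank $r$ with trivial Chern classes. The crucial input is then Langer's boundedness theorem, which makes these objects the $k$-points of a finite-type moduli scheme over the finite field $\F_q$ over which $X$ and the inverse Cartier transform are defined. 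Granting the finiteness statement that only finitely many isomorphism classes occur among the $(E_i,\theta_i)$, the pigeonhole principle yields $a<b$ with $(E_a,\theta_a)\cong(E_b,\theta_b)$; transporting the filtrations $Fil_a,\dots,Fil_{b-1}$ through this isomorphism and repeating them produces a sequence that is periodic from step $a$ with period $b-a$ and keeps $(E_0,\theta_0)$ as its leading term, exhibiting it as quasi-periodic.

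The main obstacle is precisely this finiteness, because boundedness over the algebraically closed field $k$ only gives a finite-type parameter scheme, not finitely many points. The way through is to descend to a fixed finite field. Since $C_0^{-1}$ is an $\F_q$-algebraic construction relative to the lift ${\bf X}/W$, the de Rham term $H_i=C_0^{-1}(E_i,\theta_i)$ is defined over the same finite field as $(E_i,\theta_i)$, so the field of definition can grow only through the choice of filtration $Fil_i$. I would control this using the Frobenius structure of $C_0^{-1}$, which makes the transition $[E_i]\mapsto[E_{i+1}]$ factor through the relative $q$-power Frobenius of the moduli space, together with a Lang-Weil estimate applied uniformly over the bounded family, so that all the $(E_i,\theta_i)$ may be taken over a single finite extension $\F_Q$; a finite-type $\F_Q$-scheme has only finitely many $\F_Q$-points, which forces the required repetition. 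Making the interplay between the filtration choices and the finite field of definition precise, while keeping every graded term semistable, is the delicate heart of the argument.
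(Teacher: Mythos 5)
Your first direction essentially matches the paper's argument for semistability: Lemma \ref{degree formula} multiplies the degree of any Higgs subsheaf by $p$ under $Gr\circ C_0^{-1}$, and (quasi-)periodicity then forbids a Higgs subsheaf of positive degree, so every Higgs term is semistable. But your treatment of the Chern classes deviates and is weaker than needed: the Bogomolov inequality $\Delta(E_i)\cdot H^{d-2}\geq 0$ for merely semistable (as opposed to strongly semistable) Higgs sheaves in characteristic $p$ is not an available black box, and even granting it, vanishing of $\Delta$ together with $c_1=0$ only kills the $c_2$-level intersection numbers, not all Chern classes. The paper avoids this entirely with the direct scaling identity $c_l(E_{i+1})=p^l\,c_l(E_i)$ for every $l$ (since $H_i$ is locally a Frobenius pull-back twist of $E_i$ and $Gr_{Fil_i}$ preserves Chern classes), so periodicity forces $c_l(E_e)=p^{lf}c_l(E_e)$, hence $c_l=0$ for all $l$. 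Since you invoke exactly this scaling in your second direction, the fix is immediate, but as written this step is a defect.

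The genuine gap is in the converse direction, and you half-see it yourself: points of Langer's moduli space $M_{r,ss}(X)$ parametrize \emph{$S$-equivalence classes}, so every pigeonhole argument on rational points of that scheme --- including your proposed Lang--Weil refinement over a fixed $\F_Q$, which again only counts moduli points --- delivers $[(E_a,\theta_a)]=[(E_b,\theta_b)]$ as $S$-classes, not an isomorphism of Higgs bundles. Your phrase ``granting the finiteness statement that only finitely many isomorphism classes occur'' is precisely the unproven crux in the strictly semistable case, and your proposed mechanism does not produce it. The paper closes this gap differently and more concretely: first, the operators $C_0^{-1}$ and $Gr_{Fil_i}$ do not change the field of definition, so all terms are rational over a fixed finite field $k''$ and no point-counting estimates are needed; second, after a finite extension $k''\subset k'''$ the Jordan--H\"older filtration of $(E_e,\theta_e)$ exists over $k'''$ and is transported by $Gr_{Fil}\circ C_0^{-1}$ to JH filtrations of all later terms, and within a fixed $S$-class over a finite field there are only finitely many isomorphism classes because extensions of the stable constituents are classified by projective spaces over a finite field. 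Pigeonholing isomorphism classes (not moduli points) then yields a genuine $k'''$-isomorphism $(E_e,\theta_e)\cong(E_{e+f},\theta_{e+f})$ after possibly changing $e,f$. Your final step --- transporting the filtrations through the isomorphism to redefine $Fil'_{e+f}$ and continuing --- does agree with the paper's conclusion, but without the $S$-equivalence-to-isomorphism bridge the argument does not go through for non-stable semistable terms.
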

\begin{proof}
One observes that, in a Higgs-de Rham sequence,
$c_l(E_{i+1})=p^lc_l(E_{i}), i\geq 0$. This forces the chern classes
of a quasi-periodic Higgs bundle to be trivial. By Lemma \ref{degree
formula}, a degree $\lambda$ Higgs subbundle (not necessarily
subsystem of Hodge bundles) in $(E_i,\theta_i)$ gives rise to a
degree $p\lambda$ Higgs subbundle in $(E_{i+1},\theta_{i+1})$. This
implies that, in a Higgs-de Rham sequence of a quasi-periodic Higgs
bundle, each Higgs term $(E_i,\theta_i)$ contains no Higgs
subbundle of positive degree. So $(E_i,\theta_i)$ is Higgs semistable. Thus we have shown the first statement.\\
Assume $X$ has a model over a finite field $k'\subset k$. Let
$M_{r,ss}(X)$ be the moduli space of $S$-equivalence classes of rank
$r$ semistable Higgs bundles with trivial chern classes over $X$.
After A. Langer \cite{Langer} and C. Simpson \cite{Si}, it is a
projective variety over $k'$. For a strongly Higgs semistable bundle
$(E,\theta)$ over $X$ with trivial chern classes, we consider the
set of $S$-isomorphism classes $\{[(E_i,\theta_i)], i\in \N_0\}$,
where $(E_i,\theta_i)$s are all Higgs terms in a Higgs-de Rham
sequence for $(E,\theta)$. Note that the operators $C_0^{-1}$ and
$Gr_{Fil_i}$ do not change the definition field of objects. Thus, if
the leading term $(E_0,\theta_0)=(E,\theta)$ is defined over a
finite field $k''\supset k'$, all terms in a Higgs-de Rham sequence
are defined over $k''$. This implies that the above sequence is a
sequence of $k''$-rational points in $M_{r,ss}(X)$ and hence finite.
So we find two integers $e$ and $f$ such that
$[(E_{e},\theta_e)]=[(E_{e+f},\theta_{e+f})]$. If $(E_e,\theta_e)$
is Higgs stable, then there is a $k''$-isomorphism of Higgs bundles
$(E_{e},\theta_e)\cong (E_{e+f},\theta_{e+f})$. If it is only Higgs
semistable, we obtain only a $k''$-isomorphism between their
gradings. But we do find a $k'''$-isomorphism of Higgs bundles after
a certain finite field extension $k''\subset k'''$: there exits a
finite field extension $k'''$ of $k''$ such that $(E_e,\theta_e)$
admits a Jordan-H\"{o}lder (abbreviated as JH) filtration defined
over $k'''$. The operator $Gr_{Fil_e}\circ C_0^{-1}$ transports this
JH filtration into a JH filtration on $(E_{e+1},\theta_{e+1})$
defined over the same field $k'''$. Then this holds for any Higgs
term $(E_i,\theta_i), i\geq e$. Without loss of generality, we
assume that there are only two stable components in the gradings.
Then the isomorphism classes of extensions over two stable Higgs
bundles are described by a projective space over a finite field.
Since there are finitely many $S$-equivalence classes in
$\{(E_i,\theta_i),i\geq e\}$ and over each $S$-equivalence class
there are only finite many $k'''$-isomorphism classes, there exists
a $k'''$-isomorphism $(E_{e},\theta_{e})\cong
(E_{e+f},\theta_{e+f})$ after possibly choosing another $e,f$. It
determines via $C_0^{-1}$ an isomorphism of flat bundles between
$(H_{e},\nabla_e)$ and $(H_{e+f},\nabla_{e+f})$. This isomorphism
defines a filtration $Fil'_{e+f}$ on $H_{e+f}$ from the filtration
$Fil_e$ on $H_e$, which may differs from the original one. Put
$$(E'_{e+f+1},\theta'_{e+f+1})=Gr_{Fil'_{e+f}}(H_{e+f},\nabla_{e+f}).$$
One has then a tautological isomorphism between
$(E_{e+1},\theta_{e+1})$ and $(E'_{e+f+1},\theta'_{e+f+1})$.
Continuing the construction, we show that a strongly semistable
Higgs bundle with trivial chern classes can be putted into the
leading term of a quasi-periodic Higgs-de Rham sequence, hence
quasi-periodic. This shows the converse statement.
\end{proof}
\begin{theorem}\label{rank two semistable implies strongly
semistable} A rank two semistable Higgs bundle is strongly Higgs
semistable.
\end{theorem}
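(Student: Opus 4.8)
The plan is to prove the statement by induction, reducing it to a single step of the Higgs--de Rham flow. It suffices to show: \emph{given a rank two semistable Higgs bundle $(E_i,\theta_i)$, one can choose the filtration $Fil_i$ on $H_i:=C_0^{-1}(E_i,\theta_i)$ so that the graded Higgs bundle $(E_{i+1},\theta_{i+1})=Gr_{Fil_i}(H_i,\nabla_i)$ is again semistable of rank two.} Starting from $(E_0,\theta_0)=(E,\theta)$ and iterating, this produces an infinite Higgs--de Rham sequence all of whose Higgs terms are semistable, which is exactly the definition of strongly Higgs semistable. Since the rank is two, every proper nonzero Higgs subsheaf is a line subbundle, and the only filtrations to consider on $H_i$ are the trivial one and the one-step filtrations $H_i\supset L\supset 0$ given by a line subbundle $L$; for such one-step filtrations Griffiths transversality $\nabla_i(L)\subset H_i\otimes\Omega_X$ is automatic, so no compatibility constraint is lost.

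The key idea is to split into cases according to whether $H_i$ is semistable as a \emph{vector} bundle. If it is, I would take the trivial filtration, so that $(E_{i+1},\theta_{i+1})=(H_i,0)$; a vector bundle with zero Higgs field is Higgs semistable precisely when it is semistable, so this term is semistable. If $H_i$ is unstable, let $L\subset H_i$ be the maximal destabilizing line subbundle, of degree $\delta_i:=\deg L>\tfrac12\deg H_i$, and take the one-step filtration $Fil_i^1=L$. The graded is then $E_{i+1}=L\oplus(H_i/L)$ with Higgs field $\theta_{i+1}\colon L\to(H_i/L)\otimes\Omega_X$ the second fundamental form of $\nabla_i$.

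The crux is to show that this $\theta_{i+1}$ is nonzero, equivalently that $L$ is not $\nabla_i$-horizontal. I would argue by contradiction using the Cartier transform $C_0$ of \cite{OV}, which is quasi-inverse to $C_0^{-1}$ and compatible with sub-objects. If $L$ were $\nabla_i$-horizontal it would be a flat rank-one subbundle of $(H_i,\nabla_i)$; its $p$-curvature is the restriction of a nilpotent one and hence vanishes, so $C_0(L)$ is a genuine Higgs subsheaf of $C_0(H_i,\nabla_i)=(E_i,\theta_i)$. By Lemma \ref{degree formula} applied to $C_0^{-1}(C_0(L))=L$ one gets $\deg C_0(L)=\delta_i/p>\tfrac{1}{2}\deg E_i=\mu(E_i)$, contradicting the semistability of $(E_i,\theta_i)$. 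Hence $\theta_{i+1}\neq 0$, and since a nonzero $\mathcal{O}_X$-linear map of line bundles is injective, the unique $\theta_{i+1}$-invariant line subbundle of $E_{i+1}$ is the quotient $H_i/L$, of degree $\deg H_i-\delta_i<\tfrac12\deg H_i=\mu(E_{i+1})$. Therefore no sub-Higgs-sheaf destabilizes $(E_{i+1},\theta_{i+1})$, completing the inductive step.

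The main obstacle I anticipate is the horizontality dichotomy of the third paragraph: one must check carefully that a horizontal rank-one subbundle of $H_i$ genuinely lies in the domain of $C_0$ (nilpotent, exponent $\leq p-1$, so that the equivalence of \cite{OV} and \cite{LSZ} applies) and that $C_0$ takes it to a Higgs subsheaf with degree divided by $p$; this is what converts ``$L$ horizontal'' into a numerical contradiction with the semistability of $(E_i,\theta_i)$. Once this compatibility of $C_0$ with subobjects and with the degree formula is in hand, the remaining verifications---that the destabilizing candidate is unique and that its slope is too small---are elementary rank-two computations, and the induction runs uniformly whether or not $\theta_i$ vanishes.
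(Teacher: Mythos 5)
Your proposal is correct and shares the paper's skeleton: iterate one step of the Higgs--de Rham flow, take the trivial filtration when $H_i$ is semistable and the Harder--Narasimhan filtration $0\subset L\subset H_i$ otherwise, show the induced Higgs field $L\to (H_i/L)\otimes\Omega_X$ is nonzero, and finish with the elementary rank-two slope computation (the paper does the same, even concluding stability of the graded term). Where you genuinely diverge is the crux, the non-horizontality of $L$. The paper splits into two cases: for $\theta_i=0$ it argues exactly as you do, via Cartier descent ($L\cong F_X^*G_1$ with $G_1\subset E_i$ destabilizing); for $\theta_i\neq 0$ it avoids the quasi-inverse transform altogether and uses the explicit local construction of $C_0^{-1}$: the $p$-curvature of $\nabla_i$ is nilpotent and nonzero, the flat subsheaf $L'_1=C_0^{-1}(E_i^{0,1},0)$ has zero $p$-curvature, a degree estimate on a generic curve shows $L\neq L'_1$, and so $H_i$ would split generically into two $p$-curvature-zero flat line bundles, contradicting the nonvanishing of the $p$-curvature. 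Your uniform argument --- a horizontal $L$ has zero $p$-curvature by rank-one nilpotency, hence $C_0(L)$ is a $\theta_i$-invariant rank-one subsheaf of $(E_i,\theta_i)$ of slope $\deg L/p>\mu(E_i)$ by Lemma \ref{degree formula}, contradicting Higgs semistability --- is cleaner, dispenses with both the case split and the generic-curve restriction, and proves the slightly stronger statement that the destabilizing line is never horizontal whether or not $\theta_i$ vanishes; the price is that it invokes the full Ogus--Vologodsky package for the quasi-inverse $C_0$ (that it is defined on the nilpotent level $\leq p-1$ category, is exact so that monomorphisms of flat bundles go to Higgs subsheaves, and agrees with classical Cartier descent on $p$-curvature-zero objects), exactly the points you flagged, whereas the paper only ever uses the functor $C_0^{-1}$ itself together with classical Cartier descent. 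Both arguments consume the semistability of $(E_i,\theta_i)$ at the same spot --- yours directly in the slope contradiction, the paper's through the negativity of $\deg E_i^{0,1}$ restricted to the generic curve --- and your version has the minor added merit of keeping the slope inequalities correct without the implicit degree-zero normalization in the paper's final estimates.
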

\begin{proof}
Let $(E,\theta)$ be a rank two semistable Higgs bundle over $X/k$.
Note first that, for the reason of rank, $\theta^2=0$. Hence the
operator $C_0^{-1}$ applies. Denote $(H,\nabla)$ for
$C_0^{-1}(E,\theta)$, and $HN$ the Harder-Narasimhan filtration on
$H$. We need to show that the graded Higgs bundle
$Gr_{HN}(H,\nabla)$ is semistable. If $H$ is semistable, there is
nothing to prove: in this case, the $HN$ is trivial and hence the
induced Higgs field is zero, and $Gr_{HN^\cdot}(H,\nabla)=(H,0)$ is
Higgs semistable. Otherwise, the HN filtration is of form
$$
0\to L_1\to H\to L_2\to 0.
$$
\begin{claim}
$L_1\subset H$ is not $\nabla$-invariant.
\end{claim}
\begin{proof}
We can assume that $\theta\neq 0$. Otherwise, by the Cartier
descent, it follows that $L_1\cong F_X^*G_1$ for a rank one sheaf
$G_1\subset E$ whose degree is positive, which contradicts with the
semistability of $E$. Write $E=E^{1,0}\oplus E^{0,1}$ and $\theta:
E^{1,0}\to E^{0,1}\otimes \Omega_X$ is nonzero. By the local
construction of $C_0^{-1}$, the $p$-curvature of $\nabla$ is
nilpotent and nonzero. As $L_1$ is of rank one, it follows that the
$p$-curvature of $\nabla|_{L_1}$ is zero. Again by the construction
of $C_0^{-1}$, $\nabla$ preserves the rank one subsheaf
$L'_1:=C_0^{-1}(E^{0,1},0)$ and the restriction $\nabla|_{L'_1}$ has
also the $p$-curvature zero property. Let $C\subset X$ be a generic
curve. Then the nonzeroness of $\theta$ implies that $E^{0,1}|_C$
has negative degree. So is $L'_1|_{C}$. As $L_1$ has positive
degree, they are not the same rank one subsheaf of $H$. Therefore,
over a nonempty open subset $U\subset C$, one has $H=L_1\oplus
L'_1$. It contradicts the nonzeroness of the $p$-curvature of
$\nabla$.
\end{proof}
Then it follows that
$$
\theta'=Gr_{HN}\nabla: L_1\to L_2\otimes \Omega_X
$$
is nonzero. Let $L\subset Gr_{HN}H=L_1\oplus L_2$ be a Higgs sub
line bundle. As $\theta'|_{L}=0$, the composite
$$
L\hookrightarrow L_1\oplus L_2\twoheadrightarrow L_1
$$
is zero. Hence the natural map $L\to L_2$ is nonzero and it follows
that $$\deg L\leq \deg L_2<0.$$ In this case, $Gr_{HN}(H,\nabla)$ is
Higgs stable.
\end{proof}
We would like to make the following
\begin{conjecture}
A semistable Higgs bundle is strongly Higgs semistable.
\end{conjecture}

\section{A Higgs correspondence}
In this section we aim to establish a Higgs correspondence between
the category of Higgs-de Rham sequences of periodic Higgs bundles
over $X/k$ and the (modified) strict $p$-torsion category
$\mathcal{MF}^{\nabla}_{[0,n]}({\bf X}/W), n\leq p-2$ (abbreviated
as $\mathcal{MF}$) introduced by Faltings
\cite{Fa1}. Here strict means that each object in the category is annihilated by $p$.\\
We introduce first the category
$\mathcal{MF}^{\nabla}_{[0,n],f}({\bf X}/W)$, a modification of the
Faltings category $\mathcal{MF}^{\nabla}_{[0,n]}({\bf X}/W)$. For
each $f\in \N$, let $\F_{p^f}$ be the unique extension of $\F_p$ in
$k$ of degree $f$. An object in
$\mathcal{MF}^{\nabla}_{[0,n],f}({\bf X}/W)$ (abbreviated as
$\mathcal{MF}_{f}$) is a five tuple $(H,\nabla,Fil,\Phi,\iota)$,
where $(H,\nabla,Fil,\Phi)$ is object in
$\mathcal{MF}^{\nabla}_{[0,n]}({\bf X}/W)$ and $$\iota:
\F_{p^f}\hookrightarrow \End_{\mathcal{MF}}(H,\nabla,Fil,\Phi)$$ is
an embedding of $\F_p$-algebras. A morphism is a morphism in
$\mathcal{MF}^{\nabla}_{[0,n]}({\bf X}/W)$ respecting the
endomorphism structure. Clearly, the category
$\mathcal{MF}^{\nabla}_{[0,n],f}({\bf X}/W)$ for $f=1$ is just the
original $\mathcal{MF}^{\nabla}_{[0,n]}({\bf X}/W)$. On the Higgs
side, we define the category $\mathcal{HB}_{n,(0,f)}(X/k)$
(abbreviated as $\mathcal{HB}_{(0,f)}$) of the periodic Higgs-de
Rham sequences of type $(0,f)$ as follows: an object is a tuple
$(E,\theta,Fil_0,\cdots,Fil_{f-1},\phi)$ where $(E,\theta)$ is a
Higgs bundle on $X/k$, $Fil_i, 0\leq i\leq f-1$ is a decreasing
filtration on $C_0^{-1}(E_i,\theta_i)$ satisfying
$Fil_i^0=C_0^{-1}(E_i,\theta_i), Fil_i^{n+1}=0$ and the Griffiths
transversality such that $Gr_{Fil_{i}}(H_{i},\nabla_{i})$ is torsion
free with $(E_0,\theta_0)=(E,\theta)$ and
$(E_i,\theta_i):=Gr_{Fil_{i-1}}(H_{i-1},\nabla_{i-1})$ inductively
defined, and $\phi$ is an isomorphism of Higgs bundles
$$
Gr_{Fil_{f-1}}\circ C_0^{-1}(E_{r-1},\theta_{r-1})\cong (E,\theta).
$$
The information of such a tuple is encoded in the following diagram:
$$
\xymatrix{
                &  (H_0,\nabla_0)\ar[dr]^{Gr_{Fil_0}}       &&  (H_{f-1},\nabla_{f-1})\ar[dr]^{Gr_{Fil_{f-1}}}    \\
 (E_0,\theta_0) \ar[ur]^{C_0^{-1}}  & &    \cdots \ar[ur]^{C_0^{-1}}&&  (E_f,\theta_f)\ar@/^2pc/[llll]^{\stackrel{\phi}{\cong} } }
$$
Note that $(E,\theta)$ of a tuple in the category is indeed
periodic. A morphism between two objects is a morphism of Higgs
bundles respecting the additional structures. As an illustration, we
explain a morphism in the category $\mathcal{HB}_{(0,1)}$ in detail:
let $(E_i,\theta_i,Fil_i,\phi_i), i=1,2$ be two objects and
$$f: (E_1,\theta_1,Fil_1,\phi_1)\to (E_2,\theta_2,Fil_2,\phi_2)$$ a morphism.
By the functoriality of $C_0^{-1}$, the morphism $f$ of Higgs
bundles induces a morphism of flat bundles:
$$C_0^{-1}(f): C_0^{-1}(E_1,\theta_1)\rightarrow C_0^{-1}(E_2,\theta_2).$$ It
is required to be compatible with the filtrations, and the induced
morphism of Higgs bundles is required to be compatible with $\phi$s,
that is, there is a commutative diagram
\begin{equation*}\label{eq1}
 \begin{CD}
 Gr_{Fil_1}C_0^{-1}(E_1,\theta_1)@>\phi_1>>(E_1,\theta_1)\\
 @VGrC_0^{-1}(f)VV@  VVfV\\
  Gr_{Fil_2}C_0^{-1}(E_2,\theta_2)@>\phi_2>>(E_2,\theta_2).
 \end{CD}
\end{equation*}

\begin{theorem}\label{correspondence in the type (0,f) case}
There is a one to one correspondence between the category $
\mathcal{MF}^{\nabla}_{[0,n],f}({\bf X}/W)$ and the category
$\mathcal{HB}_{n,(0,f)}(X/k)$.
\end{theorem}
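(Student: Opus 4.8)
The plan is to construct explicit functors in both directions and exhibit them as mutually quasi-inverse, the conceptual core being the identification of Faltings' Frobenius structure $\Phi$ with the inverse Cartier transform $C_0^{-1}$ in the strict $p$-torsion, nilpotent setting. I would first treat the case $f=1$, i.e. the original category $\mathcal{MF}^{\nabla}_{[0,n]}({\bf X}/W)$ against $\mathcal{HB}_{n,(0,1)}(X/k)$. Given an object $(H,\nabla,Fil,\Phi)$, set $(E,\theta):=Gr_{Fil}(H,\nabla)$, the associated graded system of Hodge bundles with its Kodaira--Spencer Higgs field. Since each object is killed by $p$, the Faltings relation $\Phi^i|_{Fil^{i+1}}=p\,\Phi^{i+1}$ degenerates to $\Phi^i|_{Fil^{i+1}}=0$, so the collection $(\Phi^i)$ factors through the graded pieces and assembles into a single Frobenius-linear isomorphism $\bigoplus_i Fil^i/Fil^{i+1}\xrightarrow{\sim} H$. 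The heart of the matter is that this, equivalently an $\sO_X$-linear isomorphism from the Frobenius pullback, is exactly the datum of an isomorphism $C_0^{-1}(E,\theta)\cong (H,\nabla)$ of flat bundles; transporting $Fil_0:=Fil$ to $C_0^{-1}(E,\theta)$ and taking $\phi:=\mathrm{id}$ the tautological identification $Gr_{Fil_0}C_0^{-1}(E,\theta)\cong(E,\theta)$ produces an object of $\mathcal{HB}_{n,(0,1)}(X/k)$. The inverse functor reverses these steps, reconstructing $\Phi$ out of $\phi$ and $C_0^{-1}$.

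Next I would incorporate the endomorphism structure $\iota$ to pass from $f=1$ to general $f$. Because $k\supset \F_{p^f}$, the embedding $\iota$ splits $H$ into eigencomponents $H=\bigoplus_{j\in\Z/f}H_j$, where $\iota(a)$ acts on $H_j$ by the scalar $a^{p^j}$; since $\nabla$ and $Fil$ commute with $\iota$, they respect this decomposition, giving $\nabla_j,Fil_j$ on each $H_j$. The key semilinearity computation is that $\Phi$, being $\sigma$-semilinear with $\sigma$ inducing $x\mapsto x^p$ on $\F_{p^f}$, satisfies $\iota(a)\Phi(x)=\Phi(\iota(a)x)=\sigma(a^{p^j})\Phi(x)=a^{p^{j+1}}\Phi(x)$ for $x\in H_j$, hence $\Phi(H_j)\subset H_{j+1}$. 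Thus $\Phi$ cyclically permutes the components, and setting $(E_j,\theta_j):=Gr_{Fil_j}(H_j,\nabla_j)$, the $f=1$ identification applied componentwise yields isomorphisms $C_0^{-1}(E_j,\theta_j)\cong (H_{j+1},\nabla_{j+1})$ together with the filtrations $Fil_{j+1}$; the closing isomorphism $C_0^{-1}(E_{f-1},\theta_{f-1})\cong(H_0,\nabla_0)$ and the grading supply the periodicity datum $\phi$. This is precisely an object of $\mathcal{HB}_{n,(0,f)}(X/k)$, and conversely a period-$f$ sequence reassembles into $(H=\bigoplus_j H_j,\nabla,Fil,\Phi,\iota)$ by declaring $\F_{p^f}$ to act by $a\mapsto a^{p^j}$ on $H_j$ and gluing the inverse Cartier isomorphisms and $\phi$ into $\Phi$.

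Finally I would verify functoriality and that the two constructions are mutually inverse: a morphism in $\mathcal{MF}_{f}$ respects $\nabla,Fil,\Phi,\iota$, so it preserves the eigendecomposition and descends to a morphism of Higgs-de Rham sequences compatible with $\phi$, and conversely; indeed the commuting square in the definition of an $\mathcal{HB}_{(0,1)}$-morphism corresponds exactly to $\Phi$-equivariance. I expect the main obstacle to be the identification underlying the $f=1$ case, namely that Faltings' Frobenius $\Phi$ coincides with $C_0^{-1}$. This requires matching Faltings' local recipe --- built from a lift of Frobenius and the filtration via divided powers --- with the exponential twisting of the transition data of local Frobenius lifts used in the construction of $C_0^{-1}$ in \cite{OV} and \cite{LSZ}, checking that the resulting gluing cocycles and the induced connections agree within the nilpotent range of exponent $\leq p-1$. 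This is the coincidence pointed out by Ogus and implicit in Remark 2.10 of \cite{OV}, whose careful verification carries the technical weight of the theorem.
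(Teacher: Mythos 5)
Your proposal is correct and follows essentially the same route as the paper: the $f=1$ case rests on the identification of Faltings' strictly $p$-divisible $\Phi$ with the inverse Cartier transform (the global isomorphism $\tilde\Phi\colon C_0^{-1}(Gr_{Fil}(H,\nabla))\cong (H,\nabla)$ of Proposition 5 in \cite{LSZ}, with $\Phi$ reconstructed locally from $\phi$ via the charts and checked horizontal), and the general case on the eigendecomposition under $\iota(\xi)$ into components with eigenvalues $\xi^{p^j}$ cyclically permuted by Frobenius semilinearity. The only difference is organizational: you perform the eigendecomposition directly on the $\mathcal{MF}_f$ side, while the paper first transports the $\F_{p^f}$-structure across the $f=1$ equivalence to an intermediate category $\mathcal{HB}_f$ of type-$(0,1)$ sequences with endomorphisms and decomposes there (Propositions \ref{correspondence in the type (0,1) case} and \ref{correspondence from HB_f and HB_(0,f)}) --- the same computation on the other side of the equivalence.
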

To show the theorem, we choose and fix a small affine covering
$\{{\bf U}_i\}$ of ${\bf X}$, together with an absolute Frobenius
lifting $F_{{\bf U}_i}$ on each ${\bf U}_i$. By modulo $p$, the
covering induces an affine covering $\{U_i\}$ for $X$. We show first
a special case of the theorem.
\begin{proposition}\label{correspondence in the type (0,1) case}
There is a one to one correspondence between the Faltings category
$\mathcal{MF}^{\nabla}_{[0,n]}({\bf X}/W)$ and the category
$\mathcal{HB}_{n,(0,1)}(X/k)$.
\end{proposition}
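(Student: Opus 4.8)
The plan is to construct two functors going in opposite directions and check that they are mutually inverse. Starting from an object $(H,\nabla,Fil,\Phi)$ of the Faltings category $\mathcal{MF}^{\nabla}_{[0,n]}({\bf X}/W)$, I would first form the associated graded Higgs bundle $(E,\theta):=Gr_{Fil}(H,\nabla)$, where $\theta$ is the $\sO_X$-linear map induced by $\nabla$ via Griffiths transversality. This defines the underlying Higgs bundle; the filtration $Fil_0$ of the target tuple is simply $Fil$ transported onto $C_0^{-1}(E,\theta)$, and the isomorphism $\phi$ should be extracted from the strong divisibility datum $\Phi$, which is precisely the piece of structure relating the Frobenius-twisted object back to $(H,\nabla)$. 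The key technical input is that the inverse Cartier transform $C_0^{-1}$ of Ogus--Vu (used via the local exponential twisting of \cite{LSZ}) is, on nilpotent Higgs bundles of exponent $\leq p-1$, an essentially inverse operation to taking $Gr_{Fil}$, so that $C_0^{-1}(Gr_{Fil}(H,\nabla))$ recovers $(H,\nabla)$ together with the Hodge filtration data needed to reconstruct $\Phi$.

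In the opposite direction, given an object $(E,\theta,Fil_0,\phi)$ of $\mathcal{HB}_{n,(0,1)}(X/k)$, I would set $(H,\nabla):=C_0^{-1}(E,\theta)$, equip it with the filtration $Fil_0$ and the connection coming from the transform, and then build $\Phi$ out of the periodicity isomorphism $\phi\colon Gr_{Fil_0}\circ C_0^{-1}(E,\theta)\xrightarrow{\cong}(E,\theta)$. The condition $n\leq p-2$ guarantees the exponent bound is met throughout and that $C_0^{-1}$ is defined and well-behaved, while the torsion-freeness hypothesis on $Gr_{Fil_0}(H,\nabla)$ ensures that the graded object is again a genuine Higgs bundle rather than merely a graded sheaf. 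I would verify that the connection obtained this way satisfies the Griffiths transversality and that the filtration has the required jumps in degrees $[0,n]$, so that the output genuinely lands in the Faltings category.

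The heart of the argument, and the main obstacle, is the compatibility of the two constructions on morphisms and the check that the two composites are naturally isomorphic to the identity functors. Concretely, I must show that $\Phi$ (Faltings's strong divisibility / Frobenius-linear comparison isomorphism) and $\phi$ (the Higgs-side periodicity isomorphism) encode exactly the same data under the correspondence $Gr_{Fil}\leftrightarrow C_0^{-1}$; this amounts to matching Faltings's Frobenius descent datum against the inverse Cartier transform. I expect to carry this out locally on the fixed small affine covering $\{{\bf U}_i\}$ with its Frobenius liftings $F_{{\bf U}_i}$, where $C_0^{-1}$ is given by the explicit exponential twisting, and then to show the local identifications glue — the gluing being exactly where the intrinsic (covering-independent) nature of $C_0^{-1}$ proved in \cite{OV} is essential. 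The functoriality on morphisms reduces, via the commutative square defining morphisms in $\mathcal{HB}_{(0,1)}$, to the functoriality of $C_0^{-1}$ and of $Gr_{Fil}$, which is formal once the object-level equivalence is established.
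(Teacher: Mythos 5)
Your proposal follows the paper's proof essentially step for step: the paper constructs the functor $\mathcal{GR}$ by transporting $Fil$ through the comparison isomorphism $\tilde \Phi\colon C_0^{-1}(Gr_{Fil}(H,\nabla))\cong (H,\nabla)$ induced by the relative Frobenius (Proposition 5 of \cite{LSZ}), constructs the quasi-inverse by defining $\Phi$ locally on the fixed small covering as the composite $\alpha_i\circ F_{U_i}^*\phi$ in the exponential-twisting charts and checking strong $p$-divisibility and Taylor-formula compatibility on overlaps, and then exhibits the two tautological natural isomorphisms, exactly as you outline. The only check you do not name explicitly --- though it is subsumed in your requirement that the output land in the Faltings category, and it is the paper's main local computation (Lemma \ref{horizontal property}) --- is the horizontality of the locally defined $\Phi$ with respect to $\nabla$, which follows from $\phi$ being a morphism of Higgs bundles; note also that in Faltings's category the local Frobenii need not literally glue but only satisfy the Taylor-formula relation, which is what your ``gluing'' step amounts to.
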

Let $(H,\nabla,Fil,\Phi)$ be an object in $\mathcal{MF}$. Put
$(E,\theta):=Gr_{Fil}(H,\nabla)$. The following lemma gives a
functor $\mathcal{GR}$ from the category $\mathcal{MF}$ to the
category $\mathcal{HB}_{(0,1)}$.
\begin{lemma}\label{from Faltings to Higgs the fixed point case}
There is a filtration $Fil_{\exp}$ on $C_0^{-1}(E,\theta)$ together
with an isomorphism of Higgs bundles
$$
\phi_{\exp}: Gr_{Fil_{exp}}(C_0^{-1}(E,\theta))\cong (E,\theta),
$$
which is induced by the Hodge filtration $Fil$ and the relative
Frobenius $\Phi$.
\end{lemma}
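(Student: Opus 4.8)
The plan is to construct $Fil_{\exp}$ and $\phi_{\exp}$ locally on the fixed small affine covering $\{U_i\}$ and then to glue, exploiting that the relative Frobenius $\Phi$ carries exactly the data encoded by the exponential twisting in the construction of $C_0^{-1}$. First I would recall the explicit local description of the inverse Cartier transform from \cite{LSZ} and \cite{OV}: over $U_i$, equipped with the chosen Frobenius lift $F_{U_i}$, one has $C_0^{-1}(E,\theta)|_{U_i}=(F_{U_i}^*E,\nabla_i)$, where $\nabla_i$ is the canonical connection $\nabla_{can}$ on $F_{U_i}^*E$ corrected by a term built out of $\frac{dF_{U_i}}{p}$ and $\theta$; on an overlap $U_{ij}$ the two local models $F_{U_i}^*E$ and $F_{U_j}^*E$ are identified by the exponential twisting $\exp(h_{ij})$, where $h_{ij}$ is the contraction of $\theta$ against the difference quotient $\frac{F_{U_i}^*-F_{U_j}^*}{p}$ of the two Frobenius liftings.

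Next I would unwind the relative Frobenius $\Phi$ of the Faltings object $(H,\nabla,Fil,\Phi)$ in the same charts. Over $U_i$ the strong divisibility condition linearizes $\Phi$ to an isomorphism $\Phi_i\colon F_{U_i}^*\big(\bigoplus_j Gr^j_{Fil}H\big)\xrightarrow{\sim} H|_{U_i}$; since $\bigoplus_j Gr^j_{Fil}H=E$ by the very definition $(E,\theta)=Gr_{Fil}(H,\nabla)$, this is an isomorphism $F_{U_i}^*E\cong H|_{U_i}$. I would then transport the Hodge filtration $Fil$ on $H$ through $\Phi_i$ and through the local identification of $F_{U_i}^*E$ with $C_0^{-1}(E,\theta)|_{U_i}$, thereby defining $Fil_{\exp}$ on $C_0^{-1}(E,\theta)|_{U_i}$, and I would take $\phi_{\exp}$ to be the identification induced on the associated gradeds.

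The crux is to show that these local isomorphisms glue to a global one, i.e. that on $U_{ij}$ the transition cocycle built from $\Phi$ coincides with the exponential twisting cocycle $\exp(h_{ij})$ of $C_0^{-1}$. Here I would use the horizontality of $\Phi$ with respect to $\nabla$ together with the Griffiths transversality relating $\nabla$ and $\theta=Gr_{Fil}\nabla$: expanding $\Phi_i\circ\Phi_j^{-1}$ and comparing with the Taylor expansion of the difference of Frobenius liftings should reproduce precisely $\exp(h_{ij})$, in the same spirit as the computation underlying Lemma \ref{degree formula}, where on the associated graded the twisting reduces to the identity. Granting the gluing, $\nabla$ obeys Griffiths transversality with respect to $Fil_{\exp}$, the graded $Gr_{Fil_{\exp}}C_0^{-1}(E,\theta)$ is torsion free, and the Higgs field $Gr_{Fil_{\exp}}\nabla$ is identified with $\theta$, so that $\phi_{\exp}\colon Gr_{Fil_{\exp}}(C_0^{-1}(E,\theta))\cong(E,\theta)$ is an isomorphism of Higgs bundles.

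I expect the matching of the two cocycles on overlaps to be the main obstacle, since it requires tracking the divided Frobenius $\frac{dF_{U_i}}{p}$ through the linearized $\Phi$ and verifying term-by-term agreement of the exponential series. The compatibility of $\Phi$ with $\nabla$ is precisely what forces the two transitions to agree, but the bookkeeping is delicate, in particular keeping track of the filtration shifts in the definition of $\Phi$ and checking that the twisting is compatible with the decreasing filtration so that it indeed passes to a well-defined $Fil_{\exp}$ rather than merely a local datum.
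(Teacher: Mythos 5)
Your proposal is correct and follows essentially the same route as the paper: the paper also obtains $Fil_{\exp}$ as the pullback of the Hodge filtration $Fil$ under the isomorphism of flat bundles $\tilde\Phi\colon C_0^{-1}(E,\theta)\cong (H,\nabla)$ induced by the relative Frobenius, and takes $\phi_{\exp}=Gr(\tilde\Phi)$. The only difference is that the gluing step you single out as the crux is not re-derived in the paper but cited outright as Proposition 5 of \cite{LSZ}; note moreover that the Taylor-formula compatibility of the local maps $\Phi_{({\bf U}_i,F_{{\bf U}_i})}$ on overlaps is part of the \emph{definition} of an object of $\mathcal{MF}^{\nabla}_{[0,n]}({\bf X}/W)$, and Proposition 5 loc.\ cit.\ shows the charts $\alpha_i$ of $C_0^{-1}$ have exactly these Taylor transitions, so the cocycle matching you propose to verify term by term against the exponential series is immediate rather than delicate.
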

\begin{proof}
By Proposition 5 \cite{LSZ}, we showed that the relative Frobenius
induces a global isomorphism of flat bundles
$$
\tilde \Phi: C_0^{-1}(E,\theta)\cong (H,\nabla).
$$
So we define $Fil_{exp}$ on $C_0^{-1}(E,\theta)$ to be the inverse
image of $Fil$ on $H$ by $\tilde \Phi$. It induces tautologically an
isomorphism of Higgs bundles
$$
\phi_{\exp}=Gr(\tilde \Phi): Gr_{Fil_{exp}}(C_0^{-1}(E,\theta))\cong
(E,\theta).
$$
\end{proof}
Next, we show that the functor $C_0^{-1}$ induces a functor in the
opposite direction. Given an object $(E,\theta,Fil,\phi)\in
\mathcal{HB}_{(0,1)}$, it is clear to define the triple
$$(H,\nabla,Fil)=(C_0^{-1}(E,\theta),Fil).$$ What remains is to
produce a relative Frobenius $\Phi$ from the $\phi$. Following
Faltings \cite{Fa1} Ch. II. d), it suffices to give for each pair
$({\bf U}_i,F_{{\bf U}_i})$ an $\sO_{U_i}$-morphism $$\Phi_{({\bf
U}_i,F_{{\bf U}_i})}: F_{U_i}^*Gr_{Fil}H|_{U_i}\to H|_{U_i}$$
satisfying
\begin{enumerate}
    \item strong $p$-divisibility, that is, $\Phi_{({\bf U}_i,F_{{\bf
U}_i})}$ is an isomorphism,
    \item horizontal property,
    \item over each $U_i\cap U_j$, $\Phi_{({\bf U}_i,F_{{\bf
U}_i})}$ and $\Phi_{({\bf U}_j,F_{{\bf U}_j})}$ are related via the
Taylor formula.
\end{enumerate}
Recall \cite{LSZ} that over each $U_i$ we have the identification
(chart)
$$
\alpha_i:=\alpha_{({\bf U}_i,F_{{\bf U_i}})}:
(F_{U_i}^{*}E|_{U_i},d+\frac{dF_{{\bf
U_i}}}{p}F_{U_i}^*\theta|_{U_i})\cong C_0^{-1}(E,\theta)|_{U_i}.
$$
We define $\Phi_{({\bf U}_i,F_{{\bf U}_i})}$ to be the composite
$$
F_{U_i}^*Gr_{Fil}H|_{U_i}\stackrel{F_{U_i}^*\phi}{\longrightarrow}F_{U_i}^*E|_{U_i}\stackrel{\alpha_i}{\longrightarrow}
C_0^{-1}(E,\theta)|_{U_i}=H|_{U_i}.
$$
By construction, $\Phi_{({\bf U}_i,F_{{\bf U}_i})}$ is strongly
$p$-divisible. By Proposition 5 loc. cit., the transition function
between $\alpha_{i}$ and $\alpha_j$ is given by the Taylor formula.
It follows that $\Phi_{({\bf U}_i,F_{{\bf U}_i})}$ and $\Phi_{({\bf
U}_j,F_{{\bf U}_j})}$ are interrelated by the Taylor formula.
\begin{lemma}\label{horizontal property}
Each $\Phi_{({\bf U}_i,F_{{\bf U}_i})}$ is horizontal with respect
to $\nabla$.
\end{lemma}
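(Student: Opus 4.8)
The plan is to factor $\Phi_{({\bf U}_i,F_{{\bf U}_i})}=\alpha_i\circ F_{U_i}^*\phi$ through the chart $\alpha_i$, whose horizontality is already available, and thereby to reduce the assertion to the horizontality of the Frobenius pull-back $F_{U_i}^*\phi$. Recall from Proposition 5 of \cite{LSZ} that
$$\alpha_i:\Big(F_{U_i}^*E|_{U_i},\ d+\tfrac{dF_{{\bf U}_i}}{p}F_{U_i}^*\theta|_{U_i}\Big)\cong C_0^{-1}(E,\theta)|_{U_i}=(H,\nabla)|_{U_i}$$
is an isomorphism of flat bundles, i.e. it intertwines the local connection $\nabla_i:=d+\tfrac{dF_{{\bf U}_i}}{p}F_{U_i}^*\theta$ with $\nabla$. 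Consequently $\Phi_{({\bf U}_i,F_{{\bf U}_i})}$ is horizontal with respect to $\nabla$ if and only if $F_{U_i}^*\phi$ is horizontal from $F_{U_i}^*Gr_{Fil}H|_{U_i}$, equipped with its natural connection, to $(F_{U_i}^*E,\nabla_i)$.

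Second, I would pin down the connection carried by the source. The associated graded $Gr_{Fil}H$ is not merely a bundle but a Higgs bundle $(Gr_{Fil}H,\theta')$ with $\theta'=Gr_{Fil}\nabla$, and the relevant connection on $F_{U_i}^*Gr_{Fil}H|_{U_i}$ is the local model of its inverse Cartier transform, namely $\nabla_i':=d+\tfrac{dF_{{\bf U}_i}}{p}F_{U_i}^*\theta'$; this is exactly the connection against which the horizontality of the relative Frobenius is phrased, once one remembers that $Gr_{Fil}\nabla$ records how $\nabla$ shifts the filtration. Conceptually, the whole lemma then says that applying the functor $C_0^{-1}$ to the Higgs isomorphism $\phi\colon(Gr_{Fil}H,\theta')\cong(E,\theta)$ yields a horizontal isomorphism $C_0^{-1}(\phi)\colon C_0^{-1}(Gr_{Fil}H,\theta')\cong C_0^{-1}(E,\theta)=(H,\nabla)$, of which $\Phi_{({\bf U}_i,F_{{\bf U}_i})}$ is the local expression read off through the charts; functoriality of $C_0^{-1}$ then delivers horizontality at once.

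Third, to keep the argument self-contained I would verify the horizontality of $F_{U_i}^*\phi$ by a direct local computation, splitting each connection into its trivial part and its Higgs part. Since $\phi$ is $\sO_X$-linear, its Frobenius pull-back sends pull-back sections to pull-back sections and hence commutes with the canonical connections $d$ on both sides. It then remains to match the two twisting terms: because $\phi$ is a morphism of Higgs bundles one has $\theta\circ\phi=\phi\circ\theta'$, and applying $F_{U_i}^*$ and contracting with the divided Frobenius $\tfrac{dF_{{\bf U}_i}}{p}\colon F_{U_i}^*\Omega_X\to\Omega_X$ preserves this identity, giving $\tfrac{dF_{{\bf U}_i}}{p}F_{U_i}^*\theta\circ F_{U_i}^*\phi=F_{U_i}^*\phi\circ\tfrac{dF_{{\bf U}_i}}{p}F_{U_i}^*\theta'$. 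Adding the two contributions yields $\nabla_i\circ F_{U_i}^*\phi=F_{U_i}^*\phi\circ\nabla_i'$, and composing with the horizontal $\alpha_i$ finishes the proof.

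The hard part is conceptual rather than computational: one must resist equipping the source $F_{U_i}^*Gr_{Fil}H$ with the plain canonical connection, for with that choice $\Phi_{({\bf U}_i,F_{{\bf U}_i})}$ would fail to be horizontal, the obstruction being exactly the nonzero term $\tfrac{dF_{{\bf U}_i}}{p}F_{U_i}^*\theta$ produced by $C_0^{-1}$ whenever $\theta\ne0$. The content of the lemma is that this term is absorbed by the graded Higgs field $Gr_{Fil}\nabla$ on the source through the Higgs-compatibility of $\phi$; checking that the divided Frobenius respects the Higgs intertwining is the one point demanding care.
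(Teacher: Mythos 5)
Your proof is correct and follows essentially the same route as the paper: reduce via the chart $\alpha_i$ of Proposition 5 in \cite{LSZ} to a statement about $F_{U_i}^*\phi$, then transport the Higgs-compatibility $\theta\circ\phi=(\phi\otimes\mathrm{id})\circ\theta'$ through $F_{U_i}^*$ and the contraction $\mathrm{id}\otimes\frac{dF_{{\bf U}_i}}{p}$, which is exactly the paper's final commutative diagram. Your only additions --- making explicit that the canonical $d$-parts match because $\phi$ is $\sO_X$-linear, and the remark that the lemma is the local expression of functoriality of $C_0^{-1}$ --- are points the paper leaves implicit, not a different argument.
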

\begin{proof}
Put $\tilde H=Gr_{Fil}H$, $\theta'=Gr_{Fil}\nabla$,
$\Phi_i=\Phi_{({\bf U}_i,F_{{\bf U}_i})}$ and $F_0$ the absolute
Frobenius over $U_i$. Following Faltings \cite{Fa1} Ch. II. d), it
is to show the following commutative diagram
$$
\CD
  F_0^*\tilde H|_{U_i} @>\Phi_i>> H|_{U_i} \\
  @V F_{{\bf U}_i}^*\nabla VV @V \nabla VV  \\
  F_0^{*}\tilde H|_{U_i}\otimes \Omega_{U_i}@>\Phi_i\otimes id>> H|_{U_i}\otimes
  \Omega_{U_i}.
\endCD
$$
Here $F_{{\bf U}_i}^*\nabla$ is just the composite of
$$
F_0^*\tilde{H}|_{U_i}\stackrel{F_0^*\theta'}{\longrightarrow}F_0^*\tilde{H}|_{U_i}\otimes
F_0^*\Omega_{U_i}\stackrel{id\otimes\frac{dF_{{\bf
U}_i}}{p}}{\longrightarrow} F_0^*\tilde{H}|_{U_i}\otimes
\Omega_{U_i}.
$$
Via the identification $\alpha_i$, it is reduced to show the
following diagram commutes:
$$
\CD
  F_0^*\tilde H|_{U_i} @>F_0^{*}\phi>> F_0^{*}E|_{U_i} \\
  @V F_{{\bf U}_i}^*\nabla VV @V \frac{dF_{{\bf U}_i}}{p}F_0^{*}\theta VV  \\
  F_0^{*}\tilde H|_{U_i}\otimes \Omega_{U_i}@>F_0^{*}\phi\otimes id>> F_0^{*}E|_{U_i}\otimes
  \Omega_{U_i}.
\endCD
$$
As $\phi$ is a morphism of Higgs bundles, one has the following
commutative diagram:
\begin{equation*}
\begin{CD}
 \tilde{H}|_{U_ i}  @>\phi >> E|_{U_i}\\
 @V\theta' VV       @VV\theta V\\
 \tilde{H}|_{U_i}\otimes\Omega_{U_i}@>\phi\otimes id>>  E|_{U_i}\otimes
 \Omega_{U_i}.
 \end{CD}
\end{equation*}
The pull-back via $F_0^*$ of the above diagram yields the next
commutative diagram
$$
\xymatrix{
   F_0^*\tilde{H}|_{U_i}\ar[d]_{F_0^*\theta'} \ar[r]^{F_0^*\phi}
                & F_0^*E|_{U_i} \ar[d]_{F_0^*\theta} \ar@/^/[ddr]^{\frac{dF_{{\bf U}_i}}{p}F_0^{*}\theta }  \\
  F_0^*\tilde{H}|_{U_i}\otimes F_0^*\Omega_{U_i}\ar[r]^{F_0^*\phi\otimes id} \ar@/_/[drr]_{F_0^*\phi\otimes \frac{dF_{{\bf U}_i}}{p}}
                &   F_0^*E|_{U_i}\otimes F_0^*\Omega_{U_i} \ar@{>}[dr]|-{ id\otimes\frac{dF_{{\bf U}_i}}{p}}            \\
                &               & F_0^*E|_{U_i}\otimes \Omega_{U_i}.              }
$$
The commutativity of the second diagram follows now from that of the
last diagram.
\end{proof}
The above lemma provides us with the functor $\mathcal{C}_0^{-1}$ in
the opposite direction. Now we can prove Proposition
\ref{correspondence in the type (0,1) case}.
\begin{proof}
The equivalence of categories follows by providing natural
isomorphisms of functors:
$$
\mathcal{GR}\circ \mathcal{C}_0^{-1}\cong Id, \quad
\mathcal{C}_0^{-1}\circ \mathcal{GR}\cong Id.
$$
We define first a natural isomorphism $\sA$ from
$\mathcal{C}_0^{-1}\circ \mathcal{GR}$ to $Id$: for
$(H,\nabla,Fil,\Phi)\in \mathcal{MF}$, put $$(E,\theta,
Fil,\phi)=\mathcal{GR}(H,\nabla,Fil,\Phi),\quad (H',\nabla',Fil',
\Phi')=\mathcal{C}_0^{-1}(E,\theta,Fil,\phi).$$ Then one verifies
that the map $$\tilde \Phi: (H',\nabla')=C_0^{-1}\circ
Gr_{Fil}(H,\nabla)\cong (H,\nabla)$$ gives an isomorphism from
$(H',\nabla',Fil',\Phi')$ to $(H,\nabla,Fil,\Phi)$ in the category
$\mathcal{MF}$. We call it $\sA(H,\nabla,Fil,\Phi)$. It is
straightforward to verify that $\sA$ is indeed a transformation.
Conversely, a natural isomorphism $\mathcal{B}$ from
$\mathcal{GR}\circ\mathcal{C}_0^{-1}$ to $Id$ is given as follows:
for $(E,\theta,Fil,\phi)$, put $$(H,\nabla,Fil,
\Phi)=\mathcal{C}_0^{-1}(E,\theta,Fil,\phi)\quad
(E',\theta',Fil',\phi')=\mathcal{GR}(H,\nabla,Fil,\Phi).$$ Then
$\phi: Gr_{Fil}\circ C_0^{-1}(E,\theta)\cong (E,\theta)$ induces an
isomorphism from $(E',\theta',Fil',\phi')$ to $(E,\theta,Fil,\phi)$
in $\mathcal{HB}_{(0,1)}$, which we define to be
$\sB(E,\theta,Fil,\phi)$. It is direct to check that $\sB$ is a
natural isomorphism.
\end{proof}
Before moving to the proof of Theorem \ref{correspondence in the
type (0,f) case} in general, we shall introduce an intermediate
category, the category of periodic Higgs-de Rham sequences of type
$(0,1)$ with endomorphism structure $\F_{p^f}$: an object is a five
tuple $(E,\theta,Fil,\phi,\iota)$, where $(E,\theta,Fil,\phi)$ is
object in $\mathcal{HB}_{(0,1)}$ and $\iota: \F_{p^f}\hookrightarrow
\End_{\mathcal{HB}_{(0,1)}}(E,\theta,Fil,\phi)$ is an embedding of
$\F_p$-algebras. We denote this category by $\mathcal{HB}_f$. A
direct consequence of Proposition \ref{correspondence in the type
(0,1) case} is the following
\begin{corollary}\label{Corresponendence between Faltings catgory with endo and Higgs with
endo} The category $\mathcal{MF}^{\nabla}_{[0,n],f}({\bf X}/W)$ is
equivalent to the category $\mathcal{HB}_f$ of Higgs-de Rham
sequences of type $(0,1)$ with endomorphism structure $\F_{p^f}$.
\end{corollary}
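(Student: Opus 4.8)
The plan is to deduce this formally from Proposition \ref{correspondence in the type (0,1) case}, via the general principle that an equivalence of additive categories induces an equivalence between the associated categories of objects carrying a fixed $\F_p$-algebra structure acting by endomorphisms. Since the objects in $\mathcal{MF}^{\nabla}_{[0,n]}({\bf X}/W)$ are killed by $p$, the endomorphism rings on both sides are $\F_p$-algebras, so the notion of an embedding $\F_{p^f}\hookrightarrow\End(-)$ makes sense; the whole argument reduces to transporting the embeddings $\iota$ along the quasi-inverse functors $\mathcal{GR}$ and $\mathcal{C}_0^{-1}$ and the natural isomorphisms $\sA,\sB$ already constructed.

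First I would upgrade $\mathcal{GR}$ to a functor $\mathcal{MF}^{\nabla}_{[0,n],f}({\bf X}/W)\to\mathcal{HB}_f$. Both $C_0^{-1}$ and $Gr_{Fil}$ are additive, so $\mathcal{GR}$ is additive and fully faithful; hence for every object it restricts to an isomorphism of $\F_p$-algebras
$$\mathcal{GR}_*:\End_{\mathcal{MF}}(H,\nabla,Fil,\Phi)\xrightarrow{\sim}\End_{\mathcal{HB}_{(0,1)}}\bigl(\mathcal{GR}(H,\nabla,Fil,\Phi)\bigr).$$
Given an object $(H,\nabla,Fil,\Phi,\iota)$ of $\mathcal{MF}_f$, I set its image to be $\bigl(\mathcal{GR}(H,\nabla,Fil,\Phi),\,\mathcal{GR}_*\circ\iota\bigr)$; since $\mathcal{GR}_*$ is an injective $\F_p$-algebra homomorphism, $\mathcal{GR}_*\circ\iota$ is again an embedding of $\F_p$-algebras, so the image lands in $\mathcal{HB}_f$. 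On morphisms one simply applies $\mathcal{GR}$: a morphism respecting $\iota$ is sent to one respecting $\mathcal{GR}_*\circ\iota$ by functoriality. The identical recipe applied to $\mathcal{C}_0^{-1}$ produces a functor in the opposite direction.

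Next I would promote the natural isomorphisms $\sA$ and $\sB$ to the enriched categories. The key observation is that an isomorphism $g$ in the base category automatically intertwines the two transported $\F_{p^f}$-actions: applying naturality of $\sA$ to the endomorphism $\iota(a)$, for $a\in\F_{p^f}$, yields the commutative square
$$\sA(H,\nabla,Fil,\Phi)\circ(\mathcal{C}_0^{-1}\circ\mathcal{GR})(\iota(a))=\iota(a)\circ\sA(H,\nabla,Fil,\Phi),$$
in which $(\mathcal{C}_0^{-1}\circ\mathcal{GR})(\iota(a))$ is exactly the doubly transported $\iota$-action. This says precisely that each component of $\sA$ is a morphism in $\mathcal{MF}_f$, and likewise each component of $\sB$ is a morphism in $\mathcal{HB}_f$; they therefore assemble into natural isomorphisms $\mathcal{C}_0^{-1}\circ\mathcal{GR}\cong Id$ and $\mathcal{GR}\circ\mathcal{C}_0^{-1}\cong Id$ on the enriched categories.

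The only point requiring genuine care — and the step I would treat most explicitly — is this compatibility of $\sA,\sB$ with the transported $\F_{p^f}$-structures, namely that the naturality squares above indeed express that $\mathcal{GR}_*$ and $(\mathcal{C}_0^{-1})_*$ are mutually inverse ring isomorphisms under the identifications induced by $\sA,\sB$. Everything else is formal bookkeeping: no new geometric input beyond the additivity and full faithfulness established in Proposition \ref{correspondence in the type (0,1) case} is needed.
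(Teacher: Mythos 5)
Your proposal is correct and is essentially the paper's own argument: the paper states this corollary without a separate proof, as a ``direct consequence'' of Proposition \ref{correspondence in the type (0,1) case}, and the implicit content is exactly your formal transport of the embedding $\iota$ along the equivalence given by $\mathcal{GR}$ and $\mathcal{C}_0^{-1}$, with naturality of $\sA$ and $\sB$ applied to the endomorphisms $\iota(a)$ providing the required compatibility. You have merely made explicit the bookkeeping the authors leave to the reader, including the correct observation that strict $p$-torsion makes both endomorphism rings $\F_p$-algebras, so that the notion of an $\F_{p^f}$-embedding transports across the equivalence.
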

Corollary \ref{Corresponendence between Faltings catgory with endo
and Higgs with endo} and the following proposition finish the proof
of Theorem \ref{correspondence in the type (0,f) case}.
\begin{proposition}\label{correspondence from HB_f and HB_(0,f)}
There is a one to one correspondence between the category
$\mathcal{HB}_{(0,f)}$ of periodic Higgs-de Rham sequences of type
$(0,f)$ and the category $\mathcal{HB}_f$ of periodic Higgs-de Rham
sequences of type $(0,1)$ with endomorphism structure $\F_{p^f}$.
\end{proposition}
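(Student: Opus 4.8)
The plan is to exploit the étale $\F_p$-algebra structure of $\F_{p^f}$ together with the Frobenius-semilinearity of $C_0^{-1}$. Since every finite field extension is separable, $\F_{p^f}\otimes_{\F_p}k\cong k^f$, the factors being indexed by the embeddings $\tau_j\colon\F_{p^f}\hookrightarrow k$, $\tau_j(a)=a^{p^j}$, $j\in\Z/f$, which form a single $\Gal(\F_{p^f}/\F_p)$-orbit under $\sigma\colon x\mapsto x^p$. Given an endomorphism structure $\iota\colon\F_{p^f}\hookrightarrow\End(E,\theta)$ on a Higgs bundle over $X$ (an $\sO_X$-linear action, as $\F_{p^f}\subset k\subset\Gamma(X,\sO_X)$), the corresponding idempotents split $E=\bigoplus_{j\in\Z/f}E_{(j)}$, where $\iota(a)$ acts on $E_{(j)}$ as the scalar $\tau_j(a)$. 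Because $\iota(a)$ commutes with $\theta$, the summands are Higgs subbundles, and because $\iota(a)$ preserves the Hodge grading and the filtrations (as is transparent on the equivalent category $\mathcal{MF}_f$ of Corollary \ref{Corresponendence between Faltings catgory with endo and Higgs with endo}, where endomorphisms preserve $Fil$), each $E_{(j)}$ is again a system of Hodge bundles and $\nabla$ and any $\iota$-stable filtration split accordingly.

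The key technical point, which I expect to be the main obstacle, is the behaviour of $C_0^{-1}$ on these eigencomponents. From the local description $\alpha_i\colon(F_{U_i}^*E,\ d+\tfrac{dF_{U_i}}{p}F_{U_i}^*\theta)\cong C_0^{-1}(E,\theta)|_{U_i}$, the underlying sheaf of $C_0^{-1}(E,\theta)$ is glued from $F_0^*E$ with $F_0$ the absolute Frobenius, so for a constant $\lambda\in k$ one computes $C_0^{-1}(\lambda\cdot\id_E)=\lambda^p\cdot\id$. Hence $C_0^{-1}$ carries the $\tau_j$-eigencomponent to the $\tau_{j+1}$-eigencomponent, $C_0^{-1}(E_{(j)},\theta_{(j)})=H_{(j+1)}$, i.e. it intertwines $\iota$ with the cyclic shift by $\sigma$. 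I would isolate this as a lemma, since it is the only place where the interaction between the $\F_{p^f}$-action and the Frobenius twist of $C_0^{-1}$ really enters. (As a cross-check, on $\mathcal{MF}_f$ this shift is precisely the manifest Frobenius-semilinearity of $\Phi$, which reorders the eigencomponents cyclically.)

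Granting the shift lemma, I would define the \emph{unrolling} functor $\mathcal{HB}_f\to\mathcal{HB}_{(0,f)}$ by taking the leading term to be the distinguished component $(E_{(0)},\theta_{(0)})$ and transporting the data along the chain $E_{(0)}\to H_{(1)}\to E_{(1)}\to H_{(2)}\to\cdots\to H_{(0)}\to E_{(0)}$: the filtration $Fil_i$ on $C_0^{-1}(E_i,\theta_i)$ is the one induced, via this chain of isomorphisms, from the $\tau_{i+1}$-part of $Fil$, the intermediate Higgs terms are the successive $Gr$'s exactly as prescribed in the definition of $\mathcal{HB}_{(0,f)}$, and the period isomorphism $\phi$ is obtained from the $\tau_0$-part of the given $\phi$ after going once around the cycle. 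Conversely, the \emph{rolling up} functor $\mathcal{HB}_{(0,f)}\to\mathcal{HB}_f$ sends a period-$f$ sequence with Higgs terms $(E_0,\theta_0),\dots,(E_{f-1},\theta_{f-1})$ to $\widetilde E=\bigoplus_i E_i$ with $\widetilde\theta=\bigoplus_i\theta_i$, filtration $\widetilde{Fil}=\bigoplus_i Fil_i$ on $C_0^{-1}(\widetilde E,\widetilde\theta)=\bigoplus_i C_0^{-1}(E_i,\theta_i)$ (additivity of $C_0^{-1}$), and $\widetilde\phi$ the cyclic shift assembled from the period isomorphism; the endomorphism structure $\iota$ is defined block-wise by letting $a\in\F_{p^f}$ act on $E_i$ as the scalar $\tau_i(a)$.

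The remaining verifications are then formal. That $\iota$ lands in $\End(\widetilde E,\widetilde\theta,\widetilde{Fil},\widetilde\phi)$, i.e. is compatible with $\widetilde\phi$, is exactly the shift computation $C_0^{-1}(\tau_i(a)\,\id)=\tau_{i+1}(a)\,\id$ applied block-wise: both sides of the defining square then act by $\tau_{i+1}(a)$ on the $(i+1)$-st block. Moreover $a\mapsto\iota(a)$ is an injective $\F_p$-algebra map, since each $\tau_i$ is additive and multiplicative and $\tau_0$ is injective. A morphism in $\mathcal{HB}_f$ preserves $\iota$, hence respects the eigencomponents and restricts to a morphism of the unrolled sequences, while a morphism in $\mathcal{HB}_{(0,f)}$ rolls up to a block-diagonal map commuting with the block-scalar $\iota$; so both constructions are functorial. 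Finally, rolling up the unrolling recovers $\bigoplus_j E_{(j)}=E$ with its original $\iota$, and unrolling the rolling up returns the distinguished component $E_0$ with the original filtrations and period datum, yielding natural isomorphisms in both directions and hence the asserted equivalence of categories.
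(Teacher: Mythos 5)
Your proposal is correct and takes essentially the same route as the paper's proof: the same eigen-decomposition of the Higgs bundle under the $\F_{p^f}$-action (the paper chooses a primitive element $\xi$ with eigenvalues $\xi^{p^i}$ where you use the idempotents of $\F_{p^f}\otimes_{\F_p}k$), the same key semilinearity $C_0^{-1}(\lambda\cdot\id)=\lambda^p\cdot\id$ driving the cyclic shift (which the paper leaves implicit in the matrix identity verifying $\tilde\phi\circ s=(Gr_{Fil}\circ C_0^{-1})(s)\circ\tilde\phi$), and the same pair of rolling-up/unrolling functors with a natural isomorphism of the form $Id\oplus\phi_0$ in one direction and the identity in the other.
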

We start with an object $(E,\theta,Fil_0,\cdots,Fil_{f-1},\phi)$ in
$\mathcal{HB}_{(0,f)}$. Put
$$(G,\eta):=\bigoplus_{i=0}^{f-1}(E_i,\theta_i)$$ with
$(E_0,\theta_0)=(E,\theta)$. As the functor $C_0^{-1}$ is compatible
with direct sum, one has the identification
$$
C_0^{-1}(G,\eta)=\bigoplus_{i=0}^{f-1}C_0^{-1}(E_i,\theta_i).
$$
We equip the filtration $Fil$ on $C_0^{-1}(G,\eta)$ by
$\bigoplus_{i=0}^{f-1}Fil_i$ via the above identification. Also
$\phi$ induces a natural isomorphism of Higgs bundles $\tilde \phi:
Gr_{Fil}C_0^{-1}(G,\eta)\cong (G,\eta)$ as follows: as
$$Gr_{Fil}C_0^{-1}(G,\eta)=\bigoplus_{i=0}^{r-1}Gr_{Fil_i}C_0^{-1}(E_i,\theta_i),$$
we require that $\tilde \phi$ maps the factor
$Gr_{Fil_i}(E_i,\theta_i)$ identically to the factor
$(E_{i+1},\theta_{i+1})$ for $0\leq i\leq f-2$ (assume $f\geq 2$ to
avoid the trivial case) and the last factor
$Gr_{Fil_{f-1}}(E_{f-1},\theta_{f-1})$ isomorphically to
$(E_0,\theta_0)$ via $\phi$. Thus the so constructed four tuple
$(G,\eta,Fil,\tilde \phi)$ is an object in $\mathcal{HB}_{(0,1)}$.
\begin{lemma}\label{lemma from 0,f to f}
For an object $(E,\theta,Fil_0,\cdots,Fil_{f-1},\phi)$ in
$\mathcal{HB}_{(0,f)}$, there is a natural embedding of
$\F_p$-algebras
$$
\iota: \F_{p^r}\to \End_{\mathcal{HB}_{(0,1)}}(G,\eta,Fil,\tilde
\phi).
$$
Thus the extended tuple $(G,\eta,Fil,\tilde \phi,\iota)$ is an
object in $\mathcal{HB}_f$.
\end{lemma}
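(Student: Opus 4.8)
The plan is to produce $\iota$ by a single explicit formula and then verify, one by one, the conditions needed for its image to lie in $\End_{\mathcal{HB}_{(0,1)}}(G,\eta,Fil,\tilde\phi)$, namely commutation with $\eta$, preservation of $Fil$ under $C_0^{-1}$, and compatibility with $\tilde\phi$. Since by hypothesis $\F_{p^f}$ is the degree-$f$ subfield of $k$, for every $a\in\F_{p^f}$ all of its Frobenius powers $a^{p^i}$ are honest constants in $k\subseteq\sO_X$. I would therefore define $\iota(a)$ to act on the summand $(E_i,\theta_i)$ of $(G,\eta)=\bigoplus_{i=0}^{f-1}(E_i,\theta_i)$ as multiplication by the scalar $a^{p^i}$, i.e. by $\sigma^i(a)$ where $\sigma$ denotes the $p$-power Frobenius of $\F_{p^f}$. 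As $\iota(a)$ is diagonal with entries obtained from $a$ by field automorphisms, $\iota$ is at once an embedding of $\F_p$-algebras into the $\sO_X$-linear endomorphism ring of $G$; and since each entry is a central constant, $\iota(a)$ commutes with $\eta=\bigoplus_i\theta_i$ and so is an endomorphism of the Higgs bundle $(G,\eta)$.

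The decisive point, on which the remaining two compatibilities both rest, is to compute the effect of $C_0^{-1}$ on a constant scalar endomorphism. Working in the local charts $\alpha_i\colon(F_{U_i}^*E|_{U_i},\,d+\tfrac{dF_{{\bf U}_i}}{p}F_{U_i}^*\theta|_{U_i})\cong C_0^{-1}(E,\theta)|_{U_i}$, the functor $C_0^{-1}$ carries multiplication by a constant $c\in k$ to $F_X^*(c\cdot\id)$ on the Frobenius pullback; and the pullback relation $s\otimes c\,e=c^{p}\,s\otimes e$ shows that $F_X^*(c\cdot\id)=c^{p}\cdot\id$. These local scalars patch regardless of the gluing, so $C_0^{-1}(c\cdot\id)=c^{p}\cdot\id$ globally on $C_0^{-1}(E,\theta)$. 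Applied to $\iota(a)$, the summand-$i$ scalar $a^{p^i}$ is turned into $a^{p^{i+1}}$ on $C_0^{-1}(E_i,\theta_i)$. In particular $C_0^{-1}(\iota(a))$ is again a diagonal constant, hence automatically preserves the filtration $Fil=\bigoplus_i Fil_i$ and induces a scalar on each graded piece; this disposes of the $Fil$-compatibility.

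It remains to check that $\iota(a)$ commutes with $\tilde\phi$, and here the Frobenius twist just computed is designed to match the cyclic shift built into $\tilde\phi$. For $0\le i\le f-2$ the map $\tilde\phi$ is the definitional identification $Gr_{Fil_i}C_0^{-1}(E_i,\theta_i)=(E_{i+1},\theta_{i+1})$, on which $C_0^{-1}(\iota(a))$ acts by $a^{p^{i+1}}$ and $\iota(a)$ acts by $a^{p^{i+1}}$ as well, so the square commutes. On the last factor $Gr_{Fil_{f-1}}C_0^{-1}(E_{f-1},\theta_{f-1})$ the induced scalar is $a^{p^{f}}=a$ — it is precisely the identity $a^{p^f}=a$ in $\F_{p^f}$ that makes the twist close up after exactly $f$ steps — and $\phi$ carries this factor $\sO_X$-linearly onto $(E_0,\theta_0)$, where $\iota(a)$ also acts by $a$; being $\sO_X$-linear, $\phi$ commutes with the constant $a$. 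Thus $\iota(a)\in\End_{\mathcal{HB}_{(0,1)}}(G,\eta,Fil,\tilde\phi)$ for all $a\in\F_{p^f}$, and $(G,\eta,Fil,\tilde\phi,\iota)$ is an object of $\mathcal{HB}_f$. I expect the only real subtlety to be the semilinearity identity $C_0^{-1}(c\cdot\id)=c^{p}\cdot\id$ together with the bookkeeping confirming that the resulting Frobenius twisting of the diagonal scalars dovetails exactly with the cyclic shift of $\tilde\phi$, so that the $\F_{p^f}$-action indeed closes up after $f$ applications of $C_0^{-1}$.
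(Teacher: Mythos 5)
Your proposal is correct and is essentially the paper's own argument: both define $\iota(a)$ to act on the summand $(E_i,\theta_i)$ of $(G,\eta)$ by the Frobenius conjugate $a^{p^i}$ and exploit the semilinearity $C_0^{-1}(c\cdot\id)=c^{p}\cdot\id$ for constants $c\in k$, so that the induced twist on the graded side matches the cyclic shift built into $\tilde\phi$ and closes up via $a^{p^f}=a$. The paper merely packages this by reducing without loss of generality to $f=2$, choosing a primitive element $\xi$ of $\F_{p^f}$ and checking a $2\times 2$ matrix identity, whereas you treat general $f$ and all $a$ at once and make the key computation $C_0^{-1}(c\cdot\id)=c^{p}\cdot\id$ explicit --- a clarification of the same route, not a different one.
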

\begin{proof}
Without loss of generality, we assume $f=2$. Choose a primitive
element $\xi$ in $\F_{p^r}|\F_{p}$ once and for all. To define the
embedding $\iota$, it suffices to specify the image $s:=\iota(\xi)$,
which is defined as follows: write $(G,\eta)=(E_0,\theta_0)\oplus
(E_1,\theta_1)$. Then $s=m_{\xi}\oplus m_{\xi^p}$, where
$m_{\xi^{p^i}},i=0,1$ is the multiplication map by $\xi^{p^i}$. It
defines an endomorphism of $(G,\eta)$ and preserves $Fil$ on
$C_0^{-1}(G,\eta)$. Write $(Gr_{Fil}\circ C_0^{-1})(s)$ to be the
induced endomorphism of $Gr_{Fil}C_0^{-1}(G,\eta)$. It remains to
verify the commutativity $$\tilde \phi\circ s=(Gr_{Fil}\circ
C_0^{-1})(s)\circ \tilde \phi.$$ In terms of a local basis, it boils
down to the equation
$$
\left(
  \begin{array}{cc}
    0 & 1 \\
    \phi & 0 \\
  \end{array}
\right)\left(
         \begin{array}{cc}
           \xi & 0 \\
           0 & \xi^p \\
         \end{array}
       \right)=\left(
  \begin{array}{cc}
    \xi^p & 0 \\
    0 & \xi \\
  \end{array}
\right)\left(
         \begin{array}{cc}
           0 & 1 \\
           \phi & 0 \\
         \end{array}
       \right),
$$
which is clear.
\end{proof}
Conversely, given an object $(G,\eta,Fil,\phi,\iota)$ in the
category $\mathcal{HB}_{f}$, we can associate it an object in
$\mathcal{HB}_{(0,f)}$ as follows: the endomorphism $\iota(\xi)$
decomposes $(G,\eta)$ into eigenspaces:
$$
(G,\eta)=\bigoplus_{i=0}^{f-1}(G_i,\eta_i),
$$
where $(G_i,\eta_i)$ is the eigenspace to the eigenvalue
$\xi^{p^i}$. The isomorphism $C_0^{-1}(\iota(\xi))$ induces the
eigen-decomposition of the de Rham bundle as well:
$$
(C_0^{-1}(G,\eta),Fil)=\bigoplus_{i=0}^{f-1}(C_0^{-1}
(G_i,\eta_i),Fil_i).
$$
Under the decomposition, the isomorphism $\phi:
Gr_{Fil}C_0^{-1}(G,\eta)\cong (G,\eta)$ decomposes into
$\oplus_{i=0}^{f-1}\phi_i$ such that
$$
\phi_i: Gr_{Fil_i}C_0^{-1}(G_i,\eta_i)\cong (G_{i+1\mod
f},\theta_{i+1\mod f}).
$$
Put $(E,\theta)=(G_0,\theta_0)$.
\begin{lemma}\label{lemma from f to 0,f}
The filtrations $\{Fil_i\}$s and isomorphisms of Higgs bundles
$\{\phi_i\}$s induce inductively the filtration $\widetilde{Fil}_i$
on $C_0^{-1}(E_i,\theta_i), i=0,\cdots,f-1$ and the isomorphism of
Higgs bundles
$$
\tilde \phi: Gr_{\widetilde{Fil}_{f-1}}(E_{f-1},\theta_{f-1})\cong
(E,\theta).
$$
Thus the extended tuple
$(E,\theta,\widetilde{Fil}_0,\cdots,\widetilde{Fil}_{f-1},\tilde
\phi)$ is an object in $\mathcal{HB}_{(0,f)}$.
\end{lemma}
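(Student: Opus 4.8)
The strategy is to invert the construction of Lemma~\ref{lemma from 0,f to f}: I will unwind the $f$ eigen-pieces of $(G,\eta)$ into a cyclic Higgs--de Rham sequence of length $f$, using only the functoriality of $C_0^{-1}$ together with the facts already recorded, namely that the eigen-decomposition $C_0^{-1}(G,\eta)=\bigoplus_i C_0^{-1}(G_i,\eta_i)$ is a decomposition of flat bundles compatible with $Fil=\bigoplus_i Fil_i$, and that $\phi=\bigoplus_i\phi_i$ with $\phi_i\colon Gr_{Fil_i}C_0^{-1}(G_i,\eta_i)\cong (G_{i+1\bmod f},\eta_{i+1\bmod f})$.

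The heart of the argument is an induction constructing, for each $0\le i\le f-1$, a filtration $\widetilde{Fil}_i$ on $C_0^{-1}(E_i,\theta_i)$ and an isomorphism of Higgs bundles $\psi_i\colon (E_i,\theta_i)\cong (G_i,\eta_i)$. First I set $(E_0,\theta_0)=(E,\theta)=(G_0,\eta_0)$, $\psi_0=\id$ and $\widetilde{Fil}_0=Fil_0$. Given $\psi_i$, functoriality of $C_0^{-1}$ produces an isomorphism of flat bundles $C_0^{-1}(\psi_i)\colon C_0^{-1}(E_i,\theta_i)\cong C_0^{-1}(G_i,\eta_i)$, and I define $\widetilde{Fil}_i$ to be the preimage of $Fil_i$ under it. By the very definition of $\widetilde{Fil}_i$ this map is a filtered and horizontal isomorphism, so it induces a graded isomorphism $Gr_{\widetilde{Fil}_i}C_0^{-1}(E_i,\theta_i)\cong Gr_{Fil_i}C_0^{-1}(G_i,\eta_i)$; composing with $\phi_i$ defines both the next term $(E_{i+1},\theta_{i+1}):=Gr_{\widetilde{Fil}_i}C_0^{-1}(E_i,\theta_i)$ and the isomorphism $\psi_{i+1}$. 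Carrying this out for $0\le i\le f-2$ yields all the filtrations $\widetilde{Fil}_0,\dots,\widetilde{Fil}_{f-1}$, and running the same step once more at $i=f-1$, where $\phi_{f-1}\colon Gr_{Fil_{f-1}}C_0^{-1}(G_{f-1},\eta_{f-1})\cong (G_0,\eta_0)=(E,\theta)$, supplies the closing isomorphism $\tilde\phi$.

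It then remains to verify that $(E,\theta,\widetilde{Fil}_0,\dots,\widetilde{Fil}_{f-1},\tilde\phi)$ meets the axioms of $\mathcal{HB}_{(0,f)}$. The normalizations $\widetilde{Fil}_i^0=C_0^{-1}(E_i,\theta_i)$ and $\widetilde{Fil}_i^{n+1}=0$, the Griffiths transversality with respect to $\nabla_i$, and the torsion-freeness of $Gr_{\widetilde{Fil}_i}$ all hold for the restricted filtration $Fil_i$ on the flat subbundle $C_0^{-1}(G_i,\eta_i)$, being inherited from the corresponding properties of $Fil$ on $C_0^{-1}(G,\eta)$ across the direct sum; and each property is preserved under transport along the horizontal, filtered isomorphism $C_0^{-1}(\psi_i)$. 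Hence they pass to $\widetilde{Fil}_i$, and $\tilde\phi$ is an isomorphism of Higgs bundles by construction.

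There is no deep obstacle --- the lemma is essentially the inverse bookkeeping of Lemma~\ref{lemma from 0,f to f} --- but the one point requiring care is the Frobenius twist of the eigenvalues, which is what makes the resulting sequence cyclic of length exactly $f$. Since $C_0^{-1}$ raises the scalar action to its $p$-th power, the summand carrying the $\iota(\xi)$-eigenvalue $\xi^{p^i}$ is sent by $Gr_{Fil}\circ C_0^{-1}$ to the one with eigenvalue $\xi^{p^{i+1}}$, and the relation $\xi^{p^f}=\xi$ forces the index to cycle modulo $f$. This is precisely what the indexing $(G_{i+1\bmod f},\eta_{i+1\bmod f})$ of the $\phi_i$ records, so the $f$ eigen-pieces assemble into a single type $(0,f)$ object.
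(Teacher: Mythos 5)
Your proposal is correct and takes essentially the same route as the paper: the paper (writing out only the case $f=2$) likewise sets $\widetilde{Fil}_0=Fil_0$, transports $Fil_1$ to $C_0^{-1}(E_1,\theta_1)$ via $C_0^{-1}(\phi_0)$ (your $C_0^{-1}(\psi_1)$ with $\psi_1=\phi_0$), and defines $\tilde\phi$ as the composite $\phi_1\circ Gr_{\widetilde{Fil}_1}C_0^{-1}(\phi_0)$, exactly your inductive step closed up at $i=f-1$. Your explicit induction for general $f$, the verification of the $\mathcal{HB}_{(0,f)}$ axioms, and the remark on the Frobenius-twisting of eigenvalues (why $Gr\circ C_0^{-1}$ shifts $\xi^{p^i}$ to $\xi^{p^{i+1}}$, making the sequence cycle with period $f$) only spell out what the paper leaves implicit.
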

\begin{proof}
Again we shall assume $f=2$. The filtration $\widetilde{Fil}_{0}$ on
$C_0^{-1}(E_0,\theta_0)$ is just $Fil_0$. Via the isomorphism
$$C_0^{-1}(\phi_0):C_0^{-1}Gr_{Fil_0}C_0^{-1}(G_0,\eta_0)\cong
C_0^{-1}(G_1,\eta_1),$$ we obtain the filtration
$\widetilde{Fil}_{1}$ on $C_0^{-1}(E_1,\theta_1)$ from the $Fil_1$.
Finally we define $\tilde \phi$ to be the composite:
$$
Gr_{\widetilde{Fil}_{1}}(E_{1},\theta_{1})=Gr_{\widetilde{Fil}_{1}}C_0^{-1}
Gr_{\widetilde{Fil}_{0}}C_0^{-1}(E,\theta)\stackrel{
Gr_{\widetilde{Fil}_{1}}C_0^{-1}(\phi_0)}{\longrightarrow}
Gr_{\widetilde{Fil}_{1}}C_0^{-1}(G_1,\eta_1) \stackrel{
\phi_1}{\longrightarrow}(E,\theta).
$$
\end{proof}
We come to the proof of Proposition \ref{correspondence from HB_f
and HB_(0,f)}.
\begin{proof}
Note first that Lemma \ref{lemma from 0,f to f} gives us a functor
$\sE$ from $\mathcal{HB}_{(0,f)}$ to $\mathcal{HB}_{f}$, while Lemma
\ref{lemma from f to 0,f} a functor $\sF$ in the opposite direction.
We show that they give an equivalence of categories. It is direct to
see that
$$
\sF\circ\sE=Id.
$$
So it remains to give a natural isomorphism $\tau$ between
$\mathcal{E}\circ\mathcal{F}$ and $Id$. Again we assume that $f=2$
in the following argument. For $(E,\theta,Fil,\phi,\iota)$, put
$$\mathcal{F}\{(E,\theta,Fil,\phi,\iota)\}=(G,\eta,Fil_0,
Fil_1,\tilde \phi),\quad \mathcal{E}(G,\eta,Fil_0, Fil_1,\tilde
\phi)=(E',\theta',Fil',\phi',\iota').$$ Notice that
$(E',\theta')=(G,\eta)\oplus Gr_{Fil_0}C_0^{-1}(G,\eta)$, we define
an isomorphism of Higgs bundles by
$$
Id\oplus \phi_0: (E',\theta')=(G,\eta)\oplus
Gr_{Fil_0}C_0^{-1}(G,\eta)\cong (E_0,\theta_0)\oplus
(E_1,\theta_1)=(E,\theta).
$$
It is easy to check that the above isomorphism gives an isomorphism
$\tau(E,\theta,Fil,\phi,\iota)$ in the category $\mathcal{HB}_{f}$.
The functorial property of $\tau$ is easily verified.
\end{proof}
Faltings showed that the (contravariant) functor ${\bf D}$
\cite{Fa1} from $\mathcal{MF}^{\nabla}_{[0,n]}({\bf X}/W)$ to the
category of continuous $\F_p$-representations of $\pi_1({\bf X}^0)$
is fully faithful. The image is closed under subobject and quotient,
and its object is called dual crystalline sheaf. In our paper we
take the dual of ${\bf D}$ (cf. page 43 loc. cit.) without changing
the notation. A crystalline $\F_{p^f}$-representation is a
crystalline $\F_p$-representation $\V$ with an embedding of
$\F_p$-algebras $\F_{p^f}\hookrightarrow \End_{\pi_1({\bf
X}^0)}(\V)$.
\begin{corollary}\label{correspondence from crystalline represenations and
HB_(0,f)} There is an equivalence of categories between the category
of crystalline $\F_{p^f}$-representations of $\pi_1({\bf X}^0)$ and
the category of periodic Higgs-de Rham sequences of type $(0,f)$.
\end{corollary}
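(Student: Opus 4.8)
The plan is to obtain the equivalence by splicing together the two structural results already available: the purely Higgs-theoretic equivalence of Theorem \ref{correspondence in the type (0,f) case}, which identifies $\mathcal{MF}^{\nabla}_{[0,n],f}({\bf X}/W)$ with $\mathcal{HB}_{n,(0,f)}(X/k)$, and Faltings's functor ${\bf D}$, which (in the covariant form obtained after taking its dual, as explained above) realizes $\mathcal{MF}^{\nabla}_{[0,n]}({\bf X}/W)$ as the category of crystalline $\F_p$-representations of $\pi_1({\bf X}^0)$. Thus it suffices to upgrade this last identification to the level of $\F_{p^f}$-coefficients and then to compose the two equivalences.

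First I would promote ${\bf D}$ to a functor on the enriched categories. Because ${\bf D}$ is fully faithful, for every object $M=(H,\nabla,Fil,\Phi)$ it induces an isomorphism of $\F_p$-algebras
\[
{\bf D}:\ \End_{\mathcal{MF}}(M)\ \xrightarrow{\ \sim\ }\ \End_{\pi_1({\bf X}^0)}\bigl({\bf D}(M)\bigr),
\]
which is multiplicative since ${\bf D}$ carries composition to composition. Consequently an embedding of $\F_p$-algebras $\iota:\F_{p^f}\hookrightarrow\End_{\mathcal{MF}}(M)$ is the same datum as an embedding $\F_{p^f}\hookrightarrow\End_{\pi_1({\bf X}^0)}({\bf D}(M))$, i.e.\ exactly an $\F_{p^f}$-structure making the crystalline representation ${\bf D}(M)$ into a crystalline $\F_{p^f}$-representation. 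The assignment $(M,\iota)\mapsto({\bf D}(M),{\bf D}\circ\iota)$ therefore defines a functor from $\mathcal{MF}^{\nabla}_{[0,n],f}({\bf X}/W)$ to the category of crystalline $\F_{p^f}$-representations.

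Next I would verify this functor is an equivalence, both parts reducing to properties of ${\bf D}$. Full faithfulness: a morphism in $\mathcal{MF}^{\nabla}_{[0,n],f}$ is by definition a morphism in $\mathcal{MF}$ commuting with $\iota$, and under the isomorphism of Hom-groups coming from the full faithfulness of ${\bf D}$ these correspond precisely to $\pi_1({\bf X}^0)$-equivariant maps commuting with the $\F_{p^f}$-action, which are exactly the morphisms of crystalline $\F_{p^f}$-representations. Essential surjectivity: given any crystalline $\F_{p^f}$-representation $(\V,j)$, the underlying representation $\V$ is crystalline, hence of the form ${\bf D}(M)$ for some $M\in\mathcal{MF}$; transporting $j$ back through the inverse of the $\End$-isomorphism produces the required $\iota$, and then $(M,\iota)$ maps to $(\V,j)$.

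Finally I would compose this equivalence with that of Theorem \ref{correspondence in the type (0,f) case} to conclude. The only point demanding genuine care, as opposed to formal diagram chasing, is the compatibility of ${\bf D}$ with the multiplicative structure on endomorphism rings, so that a ring homomorphism out of the field $\F_{p^f}$ transports to a ring homomorphism rather than merely an additive map; I expect this to be the main (though mild) obstacle, and it is settled by the fact that ${\bf D}$ sends composition to composition together with its full faithfulness. Everything else is bookkeeping.
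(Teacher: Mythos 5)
Your proposal is correct and follows essentially the same route as the paper: the paper's proof likewise transports an $\F_{p^f}$-endomorphism structure back and forth through Faltings's fully faithful functor ${\bf D}$ (taken in its covariant, dualized form, with crystalline representations being by definition its essential image) and then invokes Theorem \ref{correspondence in the type (0,f) case}. You merely spell out the multiplicativity of the induced isomorphism on endomorphism rings and the full-faithfulness/essential-surjectivity check, which the paper leaves implicit in its two-line argument.
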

\begin{proof}
Under the functor ${\bf D}$, an $\F_{p^f}$-endomorphism structure on
an object of $\mathcal{MF}$ is mapped to an $\F_{p^f}$-endomorphism
structure on the corresponding $\F_p$-representation, and vice
versa. The result is then a direct consequence of Theorem
\ref{correspondence in the type (0,f) case}.
\end{proof}
Let $\rho$ be a crystalline $\F_{p^f}$-representation of $\pi_1({\bf
X}^0)$, and $(E,\theta,Fil_0,\cdots,Fil_{f-1},\phi)$ the
corresponding periodic Higgs-de Rham sequence of type $(0,f)$. For
$$(E_f,\theta_f)=Gr_{Fil_{f-1}}(H_{f-1},\nabla_{f-1}),$$
$C_0^{-1}(\phi)$ induces the pull-back filtration
$C_0^{-1}(\phi)^*Fil_0$ on $C^{-1}_0(E_f,\theta_f)$ and an
isomorphism of Higgs bundles $GrC_0^{-1}(\phi)$ on the gradings. It
is easy to check that
$$(E_1,\theta_1,Fil_1,\cdots,Fil_{f-1},C_0^{-1}(\phi)^*Fil_0,GrC_0^{-1}(\phi))$$
is an object in $\mathcal{HB}_{(0,f)}$, which is called the
\emph{shift} of $(E,\theta,Fil_0,\cdots,Fil_{f-1},\phi)$. For any
multiple $lf, l\geq 1$, we can lengthen
$(E,\theta,Fil_0,\cdots,Fil_{f-1},\phi)$ to an object of
$\mathcal{HB}_{(0,lf)}$: as above, we can inductively define the
induced filtration on $(H_j,\nabla_j), f\leq j\leq lf-1$ from
$Fil_i$s via $\phi$. One has the induced isomorphism of Higgs
bundles $(GrC_0^{-1})^{l'f}(\phi):
(E_{(l'+1)f},\theta_{(l'+1)f})\cong (E_{l'f},\theta_{l'f}), 0\leq
l'\leq l-1$. The isomorphism $\phi_{l}: (E_{lf},\theta_{lf})\cong
(E_0,\theta_0)$ is defined to be the composite of them. The obtained
object $(E,\theta,Fil_0,\cdots,Fil_{lf-1},\phi_l)$ is called the
$l$-th \emph{lengthening} of
$(E,\theta,Fil_0,\cdots,Fil_{f-1},\phi)$. The following result is
obvious from the construction of the above correspondence.
\begin{proposition}\label{operations on Higgs-de Rham sequences}
Let $\rho$ and $(E,\theta,Fil_0,\cdots,Fil_{f-1},\phi)$ be as above.
Then the followings are true:
\begin{itemize}
    \item [(i)] The shift of
    $(E,\theta,Fil_0,\cdots,Fil_{f-1},\phi)$ corresponds to
   $\rho^{\sigma}=\rho\otimes_{\F_{p^f},\sigma}\F_{p^f}$, the $\sigma$-conjugation
   of $\rho$. Here $\sigma\in \Gal(\F_{p^f}|\F_p)$ is the Frobenius
   element.
    \item [(ii)] For $l\in \N$, the $l$-th lengthening of
    $(E,\theta,Fil_0,\cdots,Fil_{f-1},\phi)$ corresponds to the base extension
    $\rho\otimes_{\F_{p^f}}\F_{p^{lf}}$.
\end{itemize}
\end{proposition}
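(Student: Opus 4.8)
The plan is to trace both operations through the chain of equivalences assembled in Corollary~\ref{correspondence from crystalline represenations and HB_(0,f)}: Faltings' functor ${\bf D}$, the equivalence $\mathcal{MF}^{\nabla}_{[0,n],f}({\bf X}/W)\cong \mathcal{HB}_f$ of Corollary~\ref{Corresponendence between Faltings catgory with endo and Higgs with endo}, and the equivalence $\sE,\sF$ between $\mathcal{HB}_{(0,f)}$ and $\mathcal{HB}_f$ of Proposition~\ref{correspondence from HB_f and HB_(0,f)}. The guiding observation is that along this chain the entire periodic structure of a type $(0,f)$ sequence is repackaged into the single datum of the $\F_{p^f}$-endomorphism embedding $\iota$, while the underlying object of $\mathcal{MF}^{\nabla}_{[0,n]}({\bf X}/W)$ — equivalently the underlying $\F_p$-representation — changes only mildly. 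Since ${\bf D}$ is $\F_p$-linear and, by the construction of Corollary~\ref{correspondence from crystalline represenations and HB_(0,f)}, carries the embedding $\iota$ on the $\mathcal{MF}$ side to the embedding $\F_{p^f}\hookrightarrow \End_{\pi_1({\bf X}^0)}(\V)$ defining the $\F_{p^f}$-structure of $\rho$, it suffices to identify precisely what the shift and the lengthening do to $\iota$.

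For (i), I would unwind the functor $\sE$ of Lemma~\ref{lemma from 0,f to f}, which sends $(E,\theta,Fil_0,\dots,Fil_{f-1},\phi)$ to $(G,\eta)=\bigoplus_{i=0}^{f-1}(E_i,\theta_i)$ with the cyclic isomorphism $\tilde\phi$ and the embedding $\iota$ for which $\iota(\xi)$ acts on the summand $(E_i,\theta_i)$ by $\xi^{p^i}$. Applying $\sE$ to the shift $(E_1,\theta_1,Fil_1,\dots,Fil_{f-1},C_0^{-1}(\phi)^*Fil_0,GrC_0^{-1}(\phi))$ produces the same underlying object $(G,\eta,Fil,\tilde\phi)$ of $\mathcal{HB}_{(0,1)}$, the cyclic data being merely reindexed, but now the $i$-th summand is $(E_{i+1},\theta_{i+1})$. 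Hence the new embedding $\iota'$ acts on $(E_j,\theta_j)$ by $\xi^{p^{j-1}}=(\sigma^{-1}\xi)^{p^{j}}$, so $\iota'=\iota\circ\sigma^{-1}$: the shift fixes the object of $\mathcal{HB}_{(0,1)}$ and precomposes $\iota$ with the Frobenius. Transporting this through ${\bf D}$, precomposing the $\F_{p^f}$-structure with the Frobenius is exactly the passage from $\rho$ to $\rho\otimes_{\F_{p^f},\sigma}\F_{p^f}=\rho^{\sigma}$, which proves (i).

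For (ii), I would run the same analysis with period $lf$. By construction the $l$-th lengthening has Higgs terms $(E_0,\theta_0),\dots,(E_{lf-1},\theta_{lf-1})$ with $(E_{i+f},\theta_{i+f})\cong(E_i,\theta_i)$ induced by $\phi$, so $\sE$ applied to it yields $(G,\eta)^{\oplus l}\cong\bigoplus_{i=0}^{lf-1}(E_i,\theta_i)$ together with the embedding $\iota^{(l)}\colon\F_{p^{lf}}\hookrightarrow\End$ sending a primitive $\xi\in\F_{p^{lf}}$ to the operator acting on $(E_i,\theta_i)$ by $\xi^{p^i}$. I would then check that restricting $\iota^{(l)}$ along $\F_{p^f}\hookrightarrow\F_{p^{lf}}$ recovers $\iota$ on each of the $l$ blocks, so that on the $\mathcal{MF}$ side the lengthening is the object $(H,\nabla,Fil,\Phi)\otimes_{\F_{p^f}}\F_{p^{lf}}$ with its tautological $\F_{p^{lf}}$-structure; the Galois decomposition $\F_{p^{lf}}\otimes_{\F_{p^f}}\F_{p^{lf}}\cong\prod_{j=0}^{l-1}\F_{p^{lf}}$ accounts for the $l$ blocks and their Frobenius twists. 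Applying ${\bf D}$, extension of scalars on the $\mathcal{MF}$ object corresponds to $\V\otimes_{\F_{p^f}}\F_{p^{lf}}$ with its induced action, that is, to $\rho\otimes_{\F_{p^f}}\F_{p^{lf}}$, establishing (ii).

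The genuinely routine parts are the verifications that $\sE$ turns the shift into a reindexing and that ${\bf D}$ commutes with these scalar operations; these follow from the explicit formulas in Lemma~\ref{lemma from 0,f to f} and Lemma~\ref{lemma from f to 0,f} and from the compatibility of ${\bf D}$ with endomorphisms used in Corollary~\ref{correspondence from crystalline represenations and HB_(0,f)}. The main obstacle is therefore not geometric but a matter of normalization: one must fix compatible conventions for the eigenvalue assignment $\xi\mapsto\xi^{p^i}$, for the direction of the Frobenius $\sigma$, and for the twist $\otimes_{\F_{p^f},\sigma}$, and then confirm that the automorphism produced by the shift is indeed the one naming $\rho^{\sigma}$ and not its inverse — the precise power depending on whether ${\bf D}$ or its dual is used, as noted above. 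Making this bookkeeping internally consistent is where the care is required.
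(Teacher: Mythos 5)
Your proposal is correct and takes essentially the same route as the paper: the paper gives no written argument beyond the remark that the proposition ``is obvious from the construction of the above correspondence,'' and your unwinding of the shift and the lengthening through $\sE$, the equivalence $\mathcal{MF}^{\nabla}_{[0,n],f}({\bf X}/W)\cong\mathcal{HB}_f$, and the functor ${\bf D}$ --- identifying the shift with the same underlying object equipped with $\iota\circ\sigma^{-1}$, and the lengthening with extension of scalars $\otimes_{\F_{p^f}}\F_{p^{lf}}$ --- is exactly that construction made explicit. Your normalization check (that precomposing $\iota$ with $\sigma^{-1}$ is what the convention $\rho^{\sigma}=\rho\otimes_{\F_{p^f},\sigma}\F_{p^f}$ names, independently of the variance of ${\bf D}$ since $\F_{p^f}$ is commutative) resolves the only genuinely delicate point correctly.
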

We remind also the reader of the following result.
\begin{corollary}\label{locally freeness of periodic bundles}
Periodic Higgs bundles are locally free.
\end{corollary}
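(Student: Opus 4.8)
The plan is to deduce local freeness from the Higgs correspondence of Theorem \ref{correspondence in the type (0,f) case}, by transporting the periodic Higgs bundle to the Faltings side, where the underlying bundle and its Hodge filtration are locally free by the very definition of $\mathcal{MF}^{\nabla}_{[0,n]}({\bf X}/W)$. This is the essential point to exploit: within $\mathcal{HB}_{(0,f)}$ one imposes only that each graded $Gr_{Fil_i}(H_i,\nabla_i)$ be \emph{torsion free}, so that a priori the leading term $(E,\theta)$ of a periodic Higgs-de Rham sequence is merely torsion free; the content of the corollary is that periodicity upgrades this to local freeness.

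First I would run through the chain of equivalences $\mathcal{HB}_{(0,f)}\cong \mathcal{HB}_f\cong \mathcal{MF}^{\nabla}_{[0,n],f}({\bf X}/W)$ furnished by Proposition \ref{correspondence from HB_f and HB_(0,f)} and Corollary \ref{Corresponendence between Faltings catgory with endo and Higgs with endo}. Under these, a periodic Higgs bundle $(E,\theta)$ of period $f$ is the leading term of an object $(E,\theta,Fil_0,\cdots,Fil_{f-1},\phi)\in\mathcal{HB}_{(0,f)}$, which is matched with an object $(G,\eta,Fil,\tilde\phi,\iota)\in\mathcal{HB}_f$ where $(G,\eta)=\bigoplus_{i=0}^{f-1}(E_i,\theta_i)$, and in turn $(G,\eta,Fil,\tilde\phi)$ is the image under the functor $\mathcal{GR}$ of a Faltings object $(H,\nabla,Fil,\Phi)\in\mathcal{MF}$, so that by construction $(G,\eta)=Gr_{Fil}(H,\nabla)$. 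The key step is then the observation that in $\mathcal{MF}^{\nabla}_{[0,n]}({\bf X}/W)$ the module $H$ is locally free and the Hodge filtration consists of subbundles (local direct summands); hence each $Fil^{j}/Fil^{j+1}$, and therefore $G=Gr_{Fil}H=\bigoplus_j Fil^{j}/Fil^{j+1}$, is locally free.

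Finally I would recover $(E,\theta)=(E_0,\theta_0)$ as the $\xi$-eigenspace $G_0=\ker\big(\iota(\xi)-\xi\big)$ of the $\sO_X$-linear endomorphism $\iota(\xi)$, exactly as in the decomposition preceding Lemma \ref{lemma from f to 0,f}. Since $\xi$ is a primitive element of $\F_{p^f}$ over $\F_p$, its minimal polynomial is separable with roots $\xi,\xi^p,\cdots,\xi^{p^{f-1}}$, so the eigenprojection onto $G_0$ is the Lagrange interpolation idempotent, a polynomial in $\iota(\xi)$ with coefficients in $\F_{p^f}\subset k$, and thus an $\sO_X$-linear splitting. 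This exhibits $G_0$ as a direct summand of the locally free sheaf $G$, and a direct summand of a locally free coherent sheaf is locally free; whence $E$ is locally free.

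The main obstacle to keep honest is precisely the upgrade from torsion freeness to local freeness: one must verify that the isomorphism transporting the periodic datum to $\mathcal{MF}$ genuinely identifies $E$ with (a summand of) the associated graded of a subbundle filtration, and not merely of a torsion-free filtration. This is guaranteed by the defining conditions of Faltings' category, whose objects carry a locally free underlying bundle together with a Hodge filtration by subbundles, and it is here—and only here—that periodicity enters in an essential way, since a non-periodic term need not admit such a matching.
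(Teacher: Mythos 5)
Your proof is correct and is essentially the paper's own argument: both pass through the equivalences $\mathcal{HB}_{(0,f)}\cong\mathcal{HB}_f\cong\mathcal{MF}^{\nabla}_{[0,n],f}({\bf X}/W)$ to a Faltings object $(H,\nabla,Fil,\Phi,\iota)$, quote the local freeness of $Fil$ and of $Gr_{Fil}H$ on the Faltings side, and recover $E$ as a direct summand of $Gr_{Fil}H\cong\bigoplus_{i=0}^{f-1}E_i$ (your Lagrange-interpolation idempotent in $\iota(\xi)$ is just an explicit form of the splitting that is already built into the construction of $(G,\eta)$ as a direct sum). One small correction of provenance, which does not affect validity: the local freeness of $H$, of the filtration steps, and of the grading is not ``by the very definition'' of the strict $p$-torsion category $\mathcal{MF}^{\nabla}_{[0,n]}({\bf X}/W)$ --- the paper cites it as a consequence established in the proof of Theorem 2.1 of \cite{Fa1} (p.\ 32, where strong $p$-divisibility forces it), in contrast with the category over $R_V$ in Section 4, where locally filtered-free is part of the definition.
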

\begin{proof}
Let $(E,\theta)$ be a periodic Higgs bundle. Then a Higgs-de Rham
sequence for it gives an object in the category
$\mathcal{HB}_{(0,f)}$ for a certain $f$. Let
$(H,\nabla,Fil,\Phi,\iota)$ be the corresponding object in
$\mathcal{MF}_f$. The proof of Theorem 2.1 \cite{Fa1} (cf. page 32
loc. cit.) says that $Fil$ is a filtration of locally free
subsheaves of $H$ and the grading $Gr_{Fil}H$ is also locally free.
It follows immediately that $(E,\theta)$ is locally free.
\end{proof}

\section{Quasi-periodic Higgs bundles}
A quasi-periodic Higgs-de Rham sequence of of type $(e,f)$ is a
tuple $$(E,\theta,Fil_0,\cdots,Fil_{e+f-1},\phi),$$ where $\phi$ is
an isomorphism of Higgs bundles
$$
\phi: Gr_{Fil_{e+f-1}}(H_{e+f-1},\nabla_{e+f-1})\cong
(E_{e},\theta_e).
$$
It follows from Corollary \ref{locally freeness of periodic bundles}
that the Higgs bundles $(E_i,\theta_i), e\leq i\leq e+f-1$ are
locally free. They form the category $\mathcal{HB}_{n,(e,f)}(X/k)$.
\\
We are going to associate a quasi-periodic Higgs-de Rham sequence of
type $(e,f)$ with an object in a Faltings category. We recall first
the strict $p$-torsion category $\mathcal{MF}^{\nabla}_{[0,n]}({\bf
X}_V/R_V)$, which is based on the category introduced by Faltings in
\S3-\S4 \cite{Fa2}. For $V$ a totally ramified extension of $W(k)$,
Faltings \S2 \cite{Fa2} introduced the base ring $R_V$ as follows: a
uniformizer $\pi$ of $V$ has the minimal polynomial $$f(T)=T^e +
\sum_{0<i<e} a_iT^i\in W[T].$$ It defines the $W$-algebra morphism
$W[[T]]\to V, T\mapsto \pi$ and $R_V$ is defined to be the PD-hull
of $V$. One has an excellent lifting $X/k$ over $R_V$, that is, one
takes ${\bf X}\times_{W}R_V$, the base change of ${\bf X}/W$ to
$R_V$. Put $\sX={\bf X}\times_{W}R_V/p=X\times_{k}R_V/p$. It depends
only on the ramification index $e$ of $V$, not on $V$ itself. The
sheaf of $k$-algebras $\sO_{\sX}$ admits a natural filtration
$Fil_{\sO_{\sX}}$. The composite of the natural maps $$k=W/p\to
R_V/p\stackrel{T\mapsto 0}{\longrightarrow}k$$ is the identity. It
induces the commutative diagram of $k$-schemes
$$
\xymatrix{
  X \ar[dr]_{id} \ar[r]^{\mu}
                & \sX\ar[d]^{\lambda}  \\
                & X.             }
$$
An object of the category $\mathcal{MF}^{\nabla}_{[0,n]}({\bf
X}_V/R_V)$ is a four tuple $(H,\nabla,Fil,\Phi)$, where $(H,Fil)$ is
a locally filtered-free $\sO_{\sX}$-module of finite rank, with a
local basis consisting of homogenous elements of degrees between 0
and $n$, $\nabla: H\to H\otimes \Omega_{\sX/k}$ an integrable
connection satisfying the Griffiths transversality, the relative
Frobenius $\Phi$ is strongly $p$-divisible (i.e. $\Phi$ locally over
$\sU_i\subset \sX$ induces an isomorphism
$F_{\sU_{i}}^*Gr_{Fil}^nH\cong H|_{\sU_i}$) and horizontal with
respect to $\nabla$.
\begin{lemma}
The morphism $\lambda$ induces a functor $\lambda^*$ from
$\mathcal{HB}_{(e,f)}$ to $\mathcal{MF}^{\nabla}_{[0,n],f}({\bf
X}_V/R_V)$ and the morphism $\mu$ a functor $\mu^*$ from
$\mathcal{MF}^{\nabla}_{[0,n],f}({\bf X}_V/R_V)$ to the category
$\mathcal{HB}_{(0,f)}$.
\end{lemma}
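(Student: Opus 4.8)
The plan is to construct and verify the two functors separately, in each case reducing the Faltings-category axioms to the unramified correspondence already established in Theorem \ref{correspondence in the type (0,f) case}. The geometric input throughout is the factorization $\lambda\circ\mu=\id$ of $k$-schemes together with the fact that $\sX=X\times_k R_V/p$ is a thickening of $X$ whose extra direction records the uniformizer variable $T$, and whose relative differentials $\Omega_{\sX/k}$ split off a $dT$-summand from $\lambda^*\Omega_{X/k}$.

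For $\mu^*$ I would start from an object $(H,\nabla,Fil,\Phi,\iota)$ of $\mathcal{MF}^{\nabla}_{[0,n],f}({\bf X}_V/R_V)$ and restrict it along the closed immersion $\mu\colon X\hookrightarrow\sX$, i.e.\ specialize $T\mapsto 0$. First I would check that $\mu^*(H,Fil)$ stays locally filtered-free over $\sO_X$ and that the component of $\nabla$ along $\lambda^*\Omega_{X/k}$ induces an integrable connection $\mu^*\nabla$ on $\mu^*H$ over $X$ obeying Griffiths transversality. Next I would verify that $\mu^*\Phi$ remains strongly $p$-divisible and horizontal, using that the chosen Frobenius liftings $F_{\sU_i}$ are compatible with the section $T\mapsto 0$; the endomorphism $\iota$ restricts to an embedding $\F_{p^f}\hookrightarrow\End(\mu^*(H,\nabla,Fil,\Phi))$. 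The resulting tuple is an object of the unramified category $\mathcal{MF}^{\nabla}_{[0,n],f}({\bf X}/W)$, which under Theorem \ref{correspondence in the type (0,f) case} is precisely a periodic Higgs-de Rham sequence of type $(0,f)$; functoriality is immediate from that of restriction.

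For $\lambda^*$ I would go the other way. Given a quasi-periodic Higgs-de Rham sequence $(E,\theta,Fil_0,\dots,Fil_{e+f-1},\phi)$ of type $(e,f)$, whose Higgs terms $(E_i,\theta_i)$ are locally free by Corollary \ref{locally freeness of periodic bundles}, I would pull back the terms and filtrations along $\lambda$ and glue them into a single filtered flat $\sO_{\sX}$-module $(H,\nabla,Fil)$, matching the pre-period length $e$ to the ramification index $e$ by distributing the $e+f$ successive graded pieces along the filtration $Fil_{\sO_{\sX}}$ of the PD-base. The relative Frobenius $\Phi$ would be assembled from the inverse Cartier charts $\alpha_i$ and the gluing isomorphism $\phi$, exactly as in the local construction preceding Lemma \ref{horizontal property}, but now twisted by the $T$-adic structure of $R_V/p$; its strong $p$-divisibility, horizontality and the Taylor-formula compatibility over $\sX$ would be verified as in Lemma \ref{from Faltings to Higgs the fixed point case} and Lemma \ref{horizontal property}. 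The period $f$ furnishes the embedding $\iota\colon\F_{p^f}\hookrightarrow\End(H,\nabla,Fil,\Phi)$ as in Lemma \ref{lemma from 0,f to f}.

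The hard part will be the $\lambda^*$ direction, and specifically the correct encoding of the $e$ pre-periodic steps into the ramified base. One must show that combining the pulled-back filtrations $\lambda^*Fil_i$ with $Fil_{\sO_{\sX}}$ yields a genuinely locally filtered-free $\sO_{\sX}$-module, admitting a local homogeneous basis of degrees in $[0,n]$, whose associated graded reproduces the Higgs terms, and that the $T$-twisted Frobenius stays strongly $p$-divisible and horizontal over $R_V/p$. By contrast the $\mu^*$ direction is essentially a specialization that collapses the ramification, so its verification reduces cleanly to the computations already carried out in Section 3.
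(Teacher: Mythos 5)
Your $\mu^*$ half is essentially the paper's argument: restrict along $\mu$ (set $T\mapsto 0$), observe that $(\mu^*H,\mu^*\nabla,\mu^*Fil,\mu^*\Phi)$ is an object of the unramified category $\mathcal{MF}^{\nabla}_{[0,n]}({\bf X}/W)$ --- the one point the paper checks explicitly is the natural isomorphism $Gr_{\mu^*Fil}\mu^*H\cong \mu^*Gr^n_{Fil}H$, which makes $\mu^*\Phi$ strongly $p$-divisible --- and then invoke the unramified correspondence (Proposition \ref{correspondence in the type (0,1) case}, respectively Theorem \ref{correspondence in the type (0,f) case}) to land in $\mathcal{HB}_{(0,f)}$.

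The $\lambda^*$ half, however, has a genuine gap, and it sits exactly where you flag it. You propose to glue all $e+f$ pulled-back terms into one filtered flat $\sO_{\sX}$-module by ``distributing the successive graded pieces along $Fil_{\sO_{\sX}}$,'' and you then explicitly leave the verification (local filtered-freeness, degrees in $[0,n]$, strong $p$-divisibility of the $T$-twisted Frobenius) open. As stated this plan is not just incomplete but doubtful: the terms $H_0,\dots,H_{e+f-1}$ are distinct bundles related only through $Gr\circ C_0^{-1}$, so there is no canonical flat module containing them all, and stacking $e+f$ graded pieces threatens the width constraint that local homogeneous bases have degrees in $[0,n]$. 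The paper avoids the difficulty by two moves you do not make. First, it reduces to $f=1$ by replacing the sequence with $(E',\theta')=\bigoplus_{i=0}^{f-1}(E_i,\theta_i)$ carrying its $\F_{p^f}$-endomorphism structure (you invoke this direct-sum trick, as in Lemma \ref{lemma from 0,f to f}, only to produce $\iota$, not to simplify the construction itself). Second --- and this is the key point --- the pre-periodic filtrations $Fil_0,\dots,Fil_{e-1}$ are \emph{not} encoded into the ramified Faltings module at all: one simply puts $H=\lambda^*H_e$, $\nabla=\lambda^*\nabla_e$ and $Fil=Fil_{\sO_{\sX}}\otimes\lambda^*Fil_e$, i.e.\ only the $e$-th (periodic) term is pulled back, and the ramification index $e$ is carried entirely by the base filtration $Fil_{\sO_{\sX}}$. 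Strong $p$-divisibility then follows from the natural isomorphism $F_{\sU_i}^*Gr^n_{Fil}H\cong\lambda^*F_{U_i}^*Gr_{Fil_e}H_e$ together with the local definition $\Phi=\lambda^*\Phi_{({\bf U}_i,F_{{\bf U}_i})}$ composed with that identification, and horizontality and the Taylor-formula compatibility pull back verbatim from the unramified computations (Lemma \ref{horizontal property}). So the ``hard part'' you single out does not arise on the paper's route; on your route it remains unresolved and would require a construction you have not supplied.
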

\begin{proof}
For $(E,\theta,Fil_0,\cdots,Fil_{e+f-1},\phi)$, we take
$(E',\theta')=\bigoplus_{i=0}^{f-1}(E_i,\theta_i)$. Then $Fil_i$s
and $\phi$ induces naturally an object
$(E',\theta',Fil'_0,\cdots,Fil'_{e},\phi')$ in
$\mathcal{HB}_{(e,1)}$. Thus it suffices to show the above statement
for $f=1$. \\
Put $H=\lambda^*H_e$, $\nabla=\lambda^*\nabla_e$ and
$Fil=Fil_{\sO_{\sX}}\otimes \lambda^*Fil_e$. Note that one has a
natural isomorphism of $\sO_{\sX}$-modules
$F_{\sU_{i}}^*Gr_{Fil}^nH\cong \lambda^*F_{U_i}^*Gr_{Fil_e}H_e$. We
define the relative Frobenius $\Phi$ on $H$ via the above
isomorphism composed with $\lambda^*\Phi_{({\bf U}_i,F_{{\bf
U}_i})}$, where $\Phi_{({\bf U}_i,F_{{\bf U}_i})}:
F_{U_i}^*Gr_{Fil_e}H_e\to H_e|_{U_i}$ appeared in the paragraph
before Lemma \ref{horizontal property}. This gives us the functor
$\lambda^*$ from $\mathcal{HB}_{(e,1)}$ to
$\mathcal{MF}^{\nabla}_{[0,n]}({\bf X}_V/R_V)$. Conversely, given an
object $(H,\nabla,Fil,\Phi)\in \mathcal{MF}^{\nabla}_{[0,n]}({\bf
X}_V/R_V)$, the tuple $(\mu^*H,\mu^*\nabla,\mu^*Fil,\mu^*\Phi)$ is
naturally an object in $\mathcal{MF}^{\nabla}_{[0,n]}({\bf X}/W)$:
over $\sU_i$, $\Phi$ gives an isomorphism
$F_{\sU_{i}}^*Gr_{Fil}^nH\cong H_{\sU_i}$. Pulling back the
isomorphism via $\mu$, we get $F_{U_i}^*\mu^*Gr^n_{Fil}H\cong
\mu^*H|_{U_i}$. As there is a natural $\sO_X$-modules isomorphism
$Gr_{\mu^*Fil}\mu^*H\cong \mu^*Gr^n_{Fil}H$, we have an isomorphism
$F_{U_i}^*Gr_{\mu^*Fil}\mu^*H|_{U_i}\cong \mu^*H|_{U_i}$, which
shows that $\mu^*\Phi$ is indeed a relative Frobenius. We define
$\mu^*(H,\nabla,Fil,\Phi)\in \mathcal{HB}_{(0,1)}$ to be the object
associated to $(\mu^*H,\mu^*\nabla,\mu^*Fil,\mu^*\Phi)$.
\end{proof}
\begin{corollary}\label{quasi-periodic corresponds to
representation} There is a functor from the category of
quasi-periodic Higgs-de Rham sequences of type $(e,f)$ to the
category of crystalline representations of $\pi_1({\bf X'}^0)$ into
$\GL(\F_{p^f})$, where ${\bf X'}^0$ is the generic fiber of ${\bf
X'}:={\bf X}\times_{W}\sO_K$ for a totally ramified extension
$\mathrm{Frac}(W)\subset K$ with ramification index $e$. There is
also a functor in the converse direction.
\end{corollary}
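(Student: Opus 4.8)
The plan is to realize both functors as composites of the two functors $\lambda^*$ and $\mu^*$ from the preceding Lemma with Faltings' crystalline functor ${\bf D}$ in the ramified setting of \cite{Fa2}. For the forward direction, I would start with a quasi-periodic Higgs-de Rham sequence of type $(e,f)$, that is, an object of $\mathcal{HB}_{(e,f)}$, and apply $\lambda^*$ to obtain an object of $\mathcal{MF}^{\nabla}_{[0,n],f}({\bf X}_V/R_V)$, where $V=\sO_K$ is the totally ramified extension of $W$ with ramification index $e$. The endomorphism structure $\iota\colon \F_{p^f}\hookrightarrow \End(-)$ is carried along by functoriality, so the image again carries an $\F_{p^f}$-endomorphism structure.

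Next I would invoke Faltings' functor ${\bf D}$ attached to the ramified category $\mathcal{MF}^{\nabla}_{[0,n]}({\bf X}_V/R_V)$ from \S3--\S4 of \cite{Fa2}, which is fully faithful with image closed under subobjects and quotients and lands in continuous $\F_p$-representations of the fundamental group of the relevant generic fiber. The key identification is that this generic fiber is precisely ${\bf X'}^0$, the generic fiber of ${\bf X'}={\bf X}\times_W\sO_K$: since $R_V$ is built from the uniformizer of $V=\sO_K$ and $K$ is totally ramified of ramification index $e$, the base rings match. Because ${\bf D}$ is a functor, the $\F_{p^f}$-endomorphism structure on the $\mathcal{MF}$-object is transported to an embedding $\F_{p^f}\hookrightarrow \End_{\pi_1({\bf X'}^0)}(\V)$; choosing an $\F_{p^f}$-basis of $\V$ then exhibits the representation as one into $\GL(\F_{p^f})$, and it is crystalline by the definition of the image of ${\bf D}$. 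This composite ${\bf D}\circ\lambda^*$ is the desired forward functor.

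For the converse direction, I would reverse the second half: starting from a crystalline $\F_{p^f}$-representation of $\pi_1({\bf X'}^0)$, the quasi-inverse of ${\bf D}$ on its (closed) image produces an object of $\mathcal{MF}^{\nabla}_{[0,n],f}({\bf X}_V/R_V)$, again respecting the $\F_{p^f}$-structure, and applying $\mu^*$ then yields an object of $\mathcal{HB}_{(0,f)}$, i.e. a periodic --- hence in particular quasi-periodic --- Higgs-de Rham sequence of type $(0,f)$. Thus $\mu^*\circ{\bf D}^{-1}$ is the converse functor. Note the asymmetry: the converse lands in type $(0,f)$ rather than $(e,f)$, reflecting that the two functors are mutually quasi-inverse only when $e=0$, consistently with Corollary \ref{correspondence from crystalline represenations and HB_(0,f)}.

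The main obstacle I anticipate is not the formal composition of functors but the careful invocation of Faltings' theory over the ramified base $R_V$: one must verify that ${\bf D}$ in the setting of \cite{Fa2} indeed produces representations of $\pi_1$ of the correct generic fiber ${\bf X'}^0$ and that the crystalline property and full faithfulness persist in this ramified situation, along with the bookkeeping identifying the ramification index $e$ of $K$ with the index determining $R_V$. Once these facts from \cite{Fa2} are in place, the preservation of the $\F_{p^f}$-endomorphism structure and the passage to a $\GL(\F_{p^f})$-valued representation are routine, exactly as in the proof of Corollary \ref{correspondence from crystalline represenations and HB_(0,f)}.
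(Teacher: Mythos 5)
Your forward functor coincides with the paper's: apply $\lambda^*$ to pass from $\mathcal{HB}_{(e,f)}$ to $\mathcal{MF}^{\nabla}_{[0,n],f}({\bf X}_V/R_V)$ with $V=\sO_K$, then invoke Faltings' functor over the ramified base (the paper cites the proof of Theorem 5.i) of \cite{Fa2}), the $\F_{p^f}$-endomorphism structure being transported exactly as in Corollary \ref{correspondence from crystalline represenations and HB_(0,f)}. That half is fine.

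The converse direction, however, has a genuine gap. You stop at $\mu^*\circ {\bf D}^{-1}$, which lands in $\mathcal{HB}_{(0,f)}$, and you rationalize this as an intentional asymmetry. But the statement asks for a functor in the converse direction, i.e.\ into the category named in the statement, the quasi-periodic Higgs-de Rham sequences of type $(e,f)$, and an object of $\mathcal{HB}_{(0,f)}$ is not an object of $\mathcal{HB}_{(e,f)}$: the latter is a tuple $(E,\theta,Fil_0,\cdots,Fil_{e+f-1},\phi)$ carrying $e+f$ filtrations, with $\phi$ identifying $Gr_{Fil_{e+f-1}}(H_{e+f-1},\nabla_{e+f-1})$ with $(E_e,\theta_e)$. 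The paper closes precisely this gap with one further step that your proposal omits: an embedding of $\mathcal{HB}_{(0,f)}$ into $\mathcal{HB}_{(e,f)}$ built from the lengthening operation (cf.\ Proposition \ref{operations on Higgs-de Rham sequences}). Concretely, for $(E,\theta,Fil_0,\cdots,Fil_{f-1},\phi)\in\mathcal{HB}_{(0,f)}$ one takes $l\in\N$ minimal with $e\leq lf$; the $(l+1)$-th lengthening then determines a unique object $(E',\theta',Fil'_0,\cdots,Fil'_{e+f-1},\phi')\in\mathcal{HB}_{(e,f)}$ satisfying $(E'_{i},\theta'_{i})=(E_{lf-e+i},\theta_{lf-e+i})$ for $0\leq i\leq e+f$, and the converse functor is $\mu^*\circ{\bf D}^{-1}$ composed with this embedding. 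Your observation that the two functors are quasi-inverse only for $e=0$ is correct and consistent with the paper, but it does not license landing in the wrong category: the quasi-inverse claim is restricted to $e=0$, while the converse functor itself must still target $\mathcal{HB}_{(e,f)}$.
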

\begin{proof}
The first part follows from the above functor $\lambda^*$ and the
proof of Theorem 5. i) \cite{Fa2}. To provide a functor in the
opposite direction, we use the functor $\mu^*$ together with
choosing an additional embedding of the category
$\mathcal{HB}_{(0,f)}$ into $\mathcal{HB}_{(e,f)}$. This can be done
as follows: for an object $(E,\theta,Fil_0,\cdots,Fil_{f-1},\phi)\in
 \mathcal{HB}_{(0,f)}$, let $l\in \N$ be the minimal number with $e\leq lf$. Then there is a unique object $(E',\theta',Fil'_0,\cdots,Fil'_{e+f-1},\phi')$
in $\mathcal{HB}_{(e,f)}$ obtained from its $l+1$-th lengthening
which satisfies the equality
$$
(E'_{i},\theta'_{i})=(E_{lf-e+i},\theta_{lf-e+i}), 0\leq i\leq e+f.
$$
\end{proof}
\section{Applications}
Given a periodic Higgs-de Rham sequence
$$
\xymatrix{
                &  (H_0,\nabla_0)\ar[dr]^{Gr_{Fil_0}}       &&  (H_1,\nabla_1)\ar[dr]^{Gr_{Fil_1}}    \\
 (E_0,\theta_0) \ar[ur]^{C_0^{-1}}  & &     (E_1,\theta_1) \ar[ur]^{C_0^{-1}}&&\ldots,       }
$$
we make the following observation:
\begin{lemma}
If $(E,\theta)=(E_0,\theta_0)$ is Higgs stable, then there is a
unique periodic Higgs-de Rham sequence for $(E,\theta)$ up to
isomorphism.
\end{lemma}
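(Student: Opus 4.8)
The plan is to reduce the statement to the rigidity of the Hodge filtration and to isolate that as the single substantive point. First I would fix a periodic Higgs-de Rham sequence $\mathcal S=(E,\theta,Fil_0,\cdots,Fil_{f-1},\phi)$ and a second one $\mathcal S'=(E,\theta,Fil'_0,\cdots,Fil'_{f'-1},\phi')$ with the same stable leading term $(E,\theta)$. Replacing each by a common lengthening (Proposition \ref{operations on Higgs-de Rham sequences}) I may assume $f=f'$, so that both live in $\mathcal{HB}_{n,(0,f)}(X/k)$. Over the algebraically closed field $k$, Higgs stability of $(E,\theta)$ gives, by Schur's lemma, $\End_{Higgs}(E,\theta)=k$. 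Since any morphism in $\mathcal{HB}_{n,(0,f)}(X/k)$ is determined by its restriction to a Higgs morphism of leading terms, the restriction map $\End(\mathcal S)\hookrightarrow \End_{Higgs}(E,\theta)=k$ is injective and contains $\id$, so $\mathcal S$ (and likewise $\mathcal S'$) is a simple object; under the fully faithful functor $\mathbf D$ and Corollary \ref{correspondence from crystalline represenations and HB_(0,f)} this says the associated crystalline representations $\rho,\rho'$ are irreducible.

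Next I would pin down what an isomorphism $\mathcal S\cong \mathcal S'$ must be. Such an isomorphism is a Higgs automorphism $g$ of $(E,\theta)$ compatible with all the filtrations and gluings; by the previous paragraph $g=c\cdot\id$ for a scalar $c\in k^\times$. Since $C_0^{-1}$ is a functor, $C_0^{-1}(g)=c\cdot\id$ on $C_0^{-1}(E,\theta)$, and a scalar preserves every filtration. Hence the \emph{only} obstruction to the existence of an isomorphism $\mathcal S\cong \mathcal S'$ is the equality of the induced filtrations: one has $\mathcal S\cong \mathcal S'$ if and only if $Fil_i=Fil'_i$ for all $i$, the scalar discrepancy between $\phi$ and $\phi'$ being absorbed into $c$. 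Thus the entire lemma is equivalent to the assertion that the Hodge filtrations occurring in a periodic sequence with stable leading term are uniquely determined by $(E,\theta)$.

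For the uniqueness of the filtrations I would pass, via Theorem \ref{correspondence in the type (0,f) case} and the intermediate equivalence of Proposition \ref{correspondence in the type (0,1) case}, to the Faltings object $(H,\nabla,Fil,\Phi,\iota)\in\mathcal{MF}^{\nabla}_{[0,n],f}({\bf X}/W)$. By construction the de Rham bundle $(H,\nabla)$ is $C_0^{-1}$ of the direct sum of the Higgs terms, which are the fixed stable isomorphism classes produced by $(E,\theta)$; what varies between $\mathcal S$ and $\mathcal S'$ is only the filtration $Fil$, while the strongly $p$-divisible horizontal Frobenius $\Phi\colon F^*Gr_{Fil}H\xrightarrow{\sim}H$ is cut out by $Fil$ together with $\phi$. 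I would then argue that for a simple object the filtration is rigid: the space of Griffiths-transverse filtrations on $(H,\nabla)$ whose grading is Higgs-isomorphic to $(E,\theta)$ has tangent space controlled by the Higgs hypercohomology $\HH^0$ of $\underline{\End}(E,\theta)$, which by stability equals $k$, the scalars; the scalar direction is precisely the trivial deformation inside $\Aut(E,\theta)$, so the filtration is infinitesimally rigid, and the strong $p$-divisibility of $\Phi$ eliminates the remaining finitely many components, leaving a single filtration. This forces $Fil_i=Fil'_i$ and hence $\mathcal S\cong\mathcal S'$.

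The main obstacle is exactly this last step, the rigidity of the Hodge filtration for a stable (hence simple) object: unlike in the classical Fontaine–Laffaille situation, in the strict $p$-torsion category the filtration is not recovered from $(H,\nabla,\Phi)$ by a $p$-adic divisibility, so the uniqueness must be extracted from the interaction of the Frobenius structure with stability. I expect to carry it out by combining the vanishing $\HH^0(\underline{\End}(E,\theta))=k$ with the finiteness of $S$-equivalence classes over a finite-field model already exploited in the proof of Theorem \ref{quasiperiodic equivalent to strongly semistable}, together with the simplicity established above, so that the finitely many candidate filtrations collapse to one. Everything else in the argument is formal.
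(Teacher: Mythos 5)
Your reduction agrees with the paper's up to the decisive point: Schur's lemma on the stable $(E,\theta)$ shows any two gluings differ by a scalar $\lambda\in k^\times$, scalars preserve every filtration, and the lemma therefore collapses to the uniqueness of the Hodge filtrations $Fil_i$. The paper makes the same reduction, and additionally records (citing Corollary 4.4 of \cite{SZ}) that every term $(E_i,\theta_i)$ of the sequence is again Higgs stable --- a fact you assert in passing (``the fixed stable isomorphism classes'') but do not justify, and which is needed so that at each stage the uniqueness question concerns a Griffiths-transverse filtration with \emph{stable} grading.

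The genuine gap is precisely the step you flag as the ``main obstacle'': you never prove the uniqueness of the filtration, and the deformation-theoretic substitute you sketch does not work as stated. First, the tangent-space identification is wrong in spirit: scalar automorphisms act trivially on the space of filtrations of $(H_i,\nabla_i)$, so $\HH^0(\underline{\End}(E,\theta))=k$ does not produce a ``scalar direction'' among deformations of $Fil_i$; infinitesimal variations of a Griffiths-transverse filtration on a fixed flat bundle are governed by flat sections of $\underline{\End}(H_i)$ modulo the filtration-preserving part (a strictly negative-degree piece relative to the induced grading), and stability has to be fed in there, not through $\HH^0$. Second, even granting infinitesimal rigidity, that only makes filtrations isolated; your claim that ``strong $p$-divisibility of $\Phi$ eliminates the remaining finitely many components'' is unsupported --- $\Phi$ is part of the data being compared, not an external constraint selecting among filtrations on $(H_i,\nabla_i)$, and the finiteness-over-a-finite-field device from the proof of Theorem \ref{quasiperiodic equivalent to strongly semistable} yields isomorphisms only after changing the indices $(e,f)$, never equality of two given filtrations on the same flat bundle. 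The paper instead invokes a concrete global fact: on a flat bundle there is at most one Griffiths-transverse filtration whose associated graded Higgs bundle is stable; with stability of all intermediate terms this gives $Fil_i=Fil'_i$ by induction along the sequence, and the lemma follows. So the fix is to prove or cite that uniqueness statement directly (a slope/stability comparison of the two filtrations via the graded pieces of the identity map), rather than a rigidity-plus-finiteness argument, which as proposed cannot close.
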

\begin{proof}
Let $f\in\N$ be the period of the sequence. Thus there is an
isomorphism $\phi: (E_f,\theta_f)\cong (E_0,\theta_0)$ such that the
tuple $(E,\theta,Fil_0,\cdots,Fil_{f-1},\phi)$ makes an object in
$\mathcal{HB}_{(0,f)}$. We show that the datum $Fil_i, 0\leq i\leq
f-1$ and $\phi$ are uniquely determined up to isomorphism. By
Theorem \ref{correspondence in the type (0,f) case}, there is a
corresponding object $$(H,Fil,\nabla,\Phi,\iota)\in
\mathcal{MF}_{f}$$ satisfying
$Gr_{Fil}(H,\nabla)=\bigoplus_{i=1}^{f}(E_i,\theta_i)$. Because it
holds that $$(Gr_{Fil}\circ
C_0^{-1})^{i}(E_f,\theta_f)=(E_i,\theta_i), 1\leq i\leq f-1,$$ each
$(E_i,\theta_i)$ is also Higgs stable by Corollary 4.4 \cite{SZ}.
Now we show inductively that $Fil_i$ is unique. This is because of
the fact that there is a unique filtration on a flat bundle which
satisfies the Griffiths transversality and its grading is Higgs
stable. Now we consider $\phi$. For another choice $\varphi$, one
notes that $\varphi\circ \phi^{-1}$ is an automorphism of
$(E,\theta)$. As it is stable, one must have $\varphi=\lambda\phi$
for a nonzero $\lambda$ in $k$. It is easy to see there is an
isomorphism in $\mathcal{HB}_{(0,f)}$:
$$(E,\theta,Fil_0,\cdots,Fil_{f-1},\phi)\cong (E,\theta,Fil_0,\cdots,Fil_{f-1},\lambda\phi).$$
\end{proof}
Because of the above lemma, the period of a periodic Higgs stable
bundle is well defined. We make then the following statement.
\begin{corollary}\label{stable corresponds to irreducible}
Under the equivalence of categories in Corollary \ref{correspondence
from crystalline represenations and HB_(0,f)}, there is one to one
correspondence between the isomorphism classes of irreducible
crystalline $\F_{p^f}$-representations of $\pi_1({\bf X}^0)$ and the
isomorphism classes of periodic Higgs stable bundles of period $f$.
\end{corollary}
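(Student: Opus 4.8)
The plan is to transport the statement across the equivalence of Corollary \ref{correspondence from crystalline represenations and HB_(0,f)} and then to match the categorical notion of a simple object on the two sides with the geometric notions of irreducibility and stability. Since any equivalence of categories carries simple objects to simple objects (simplicity being defined purely in terms of subobjects), an irreducible crystalline $\F_{p^f}$-representation corresponds under Corollary \ref{correspondence from crystalline represenations and HB_(0,f)} to a simple object of $\mathcal{HB}_{(0,f)}$, and conversely; here a subrepresentation of a crystalline representation is again crystalline, so irreducibility really is the same as being a simple object. Thus it suffices to show that the leading-term functor $(E,\theta,Fil_0,\cdots,Fil_{f-1},\phi)\mapsto (E,\theta)$ induces a bijection between isomorphism classes of simple objects of $\mathcal{HB}_{(0,f)}$ and isomorphism classes of periodic Higgs stable bundles of period $f$.

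First I would record that the leading-term functor is exact, in the sense that it sends a short exact sequence of objects (equivalently, of the corresponding crystalline representations, which form an abelian category closed under subobjects and quotients) to a short exact sequence of Higgs bundles; this follows because $C_0^{-1}$ is an equivalence and $Gr_{Fil}$ is exact, morphisms in $\mathcal{MF}$ being strict for the filtration. In particular a nonzero, respectively proper, subobject $M'\subset M$ produces a nonzero, respectively proper, Higgs subbundle $(E',\theta')\subset(E,\theta)$, and $(E',\theta')$ is itself the leading term of an object of $\mathcal{HB}_{(0,f)}$, hence periodic, hence of slope zero by Theorem \ref{quasiperiodic equivalent to strongly semistable}. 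Consequently, if the leading term $(E,\theta)$ is Higgs stable (necessarily of slope zero, again by Theorem \ref{quasiperiodic equivalent to strongly semistable}), it admits no proper nonzero Higgs subbundle of slope zero, so $M$ has no proper nonzero subobject and is simple.

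For the converse I would argue the contrapositive: a simple object whose leading term $(E,\theta)$ is not stable leads to a contradiction. Being periodic, $(E,\theta)$ is strongly Higgs semistable of slope zero, so if it is not stable it admits a Jordan--H\"older filtration with stable slope-zero factors. The operator $Gr_{Fil_i}\circ C_0^{-1}$ preserves semistability, stability and $S$-equivalence of slope-zero Higgs bundles and transports a Jordan--H\"older filtration of $(E_i,\theta_i)$ to one of $(E_{i+1},\theta_{i+1})$, exactly as in the proof of Theorem \ref{quasiperiodic equivalent to strongly semistable}; after $f$ steps the isomorphism $\phi$ returns us to $(E_0,\theta_0)$ and so induces a permutation of the finitely many stable Jordan--H\"older factors. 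Choosing a single factor and summing over its $\phi$-orbit should produce a $\phi$-compatible and filtration-compatible proper nonzero sub-sequence, i.e. a proper nonzero subobject of $M$, contradicting simplicity; hence the leading term of a simple object is Higgs stable. The period must then equal $f$: were the minimal period a proper divisor $f'\mid f$, the object would be the $(f/f')$-th lengthening of an object of $\mathcal{HB}_{(0,f')}$, which by Proposition \ref{operations on Higgs-de Rham sequences}(ii) corresponds to a base extension $\rho'\otimes_{\F_{p^{f'}}}\F_{p^f}$; I would rule this out by computing $\End_{\F_{p^f}[\pi_1]}$ via Schur's lemma together with the stability of the transported terms, forcing the genuine endomorphism field to be $\F_{p^f}$ and hence $f$ to be minimal.

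Finally, for bijectivity on isomorphism classes: injectivity is immediate from the uniqueness lemma preceding the statement, since a Higgs stable bundle determines its periodic Higgs--de Rham sequence, and hence the object $M$, up to isomorphism, so two simple objects with isomorphic leading terms are isomorphic; surjectivity is the same lemma read in reverse, as a periodic Higgs stable bundle of period $f$ furnishes, uniquely up to isomorphism, an object of $\mathcal{HB}_{(0,f)}$ which is simple by the exactness argument above. I expect the main obstacle to be the converse half of the middle step---turning the failure of stability of a simple object's leading term into an honest subobject---since it requires knowing that the Jordan--H\"older factors of a periodic Higgs bundle are themselves periodic and that the $\phi$-orbit of such a factor closes up into a genuine sub-sequence compatible with all the filtrations; the attendant bookkeeping matching the minimal period with the field $\F_{p^f}$ is the other delicate point.
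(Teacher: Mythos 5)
Your first half (stable leading term $\Rightarrow$ simple object, plus using the uniqueness lemma for well-definedness and injectivity of the assignment) is sound, and in fact that is essentially all the paper itself supplies: the paper gives no separate proof of Corollary \ref{stable corresponds to irreducible} beyond the preceding uniqueness lemma, whose content is that stability propagates to every $(E_i,\theta_i)$ (Corollary 4.4 of \cite{SZ}), that each $Fil_i$ is the unique Griffiths-transverse filtration with Higgs stable grading, and that $\phi$ is unique up to a scalar which is absorbed by an isomorphism in $\mathcal{HB}_{(0,f)}$; the corollary is then read off from the equivalence of Corollary \ref{correspondence from crystalline represenations and HB_(0,f)}. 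The genuine gap in your proposal is the contrapositive step ``simple $\Rightarrow$ stable.'' The $\phi$-orbit of a Jordan--H\"older factor need not be proper: $\phi$ can permute the stable factors transitively, in which case the orbit sum is all of $(E,\theta)$ and no proper subobject is produced. This is not repairable bookkeeping, because simple objects of $\mathcal{HB}_{(0,f)}$ with strictly semistable leading term really exist. Take a degree-zero line bundle $L$ with $F_X^{*2f}L\cong L$ but $F_X^{*f}L\not\cong L$ (a torsion point of $\mathrm{Pic}^0$ on which Frobenius acts with orbit of length $2f$), set $(E,\theta)=(L\oplus F_X^{*f}L,\,0)$ with trivial filtrations and $\phi$ the evident swap. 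Any subobject has periodic, hence degree-zero, leading term, which forces it to be $L$ or $F_X^{*f}L$, and neither is $\phi$-stable; so the object is simple, while $(E,\theta)$ is polystable but not stable. Under the correspondence this is the restriction of scalars to $\F_{p^f}$ of a crystalline character into $\F_{p^{2f}}^{\times}$: irreducible over $\F_{p^f}$ but not absolutely irreducible. So the dictionary you set out to prove, ``simple object of $\mathcal{HB}_{(0,f)}$ $\leftrightarrow$ stable leading term,'' is false for plain $\F_{p^f}$-irreducibility; stability matches \emph{absolute} irreducibility (irreducibility of $\rho\otimes k$, which is the form in which the paper uses the corollary in its rank-two application), and your Schur-lemma sketch for pinning the period down to exactly $f$ is entangled with the same phenomenon.

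If you want to complete the argument along your lines, the fix is to run your orbit argument with absolute irreducibility: if $(E,\theta)$ is not stable, $\phi$ permutes the Jordan--H\"older factors; a non-transitive orbit gives a proper subobject, contradicting irreducibility, while a transitive orbit of length $d>1$ exhibits a single factor as periodic of period $df$, so that after the $d$-th lengthening --- which by Proposition \ref{operations on Higgs-de Rham sequences}(ii) corresponds to the base extension $\rho\otimes_{\F_{p^f}}\F_{p^{df}}$ --- one obtains a proper subobject, i.e.\ $\rho\otimes\F_{p^{df}}$ is reducible. Note also that your construction of a subobject from a $\phi$-orbit still needs the factors realized as genuine sub-Higgs-de Rham sequences (the filtration steps of a Jordan--H\"older filtration transported by $Gr_{Fil_i}\circ C_0^{-1}$, as in the proof of Theorem \ref{quasiperiodic equivalent to strongly semistable}), not merely as subquotients; for the bottom step of the filtration this is fine, and that suffices. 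In short: your approach is more ambitious than the paper's (the paper proves only uniqueness and leaves the irreducible-versus-stable matching implicit), the stable-to-simple half and the appeal to the uniqueness lemma are correct, but the simple-to-stable step as written would fail, and as stated over $\F_{p^f}$ it is not even true.
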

The first examples of periodic Higgs stable bundles are the rank two
Higgs subbundles of uniformizing type arising from the study of the
Higgs bundle of a universal family of abelian varieties over the
good reduction of a Shimura curve of PEL type (see \cite{SZZ}). In
that case, one 'sees' the corresponding representations because of
the existence of extra endomorphisms in the universal family. The
above result gives a vast generalization of this primitive
example.\\
When a periodic Higgs bundle $(E,\theta)$ is only Higgs semistable,
the above uniqueness statement is no longer true. We shall make the
following
\begin{assumption}\label{assumption on filtration}
For each $0\leq  i\leq f-1$, the filtration $Fil_i$ on $H_i$ is
preserved by any automorphism of $(H_i,\nabla_i)$.
\end{assumption}
An isomorphism $\varphi:(E_f,\theta_f)\cong (E_0,\theta_0)$ induces
$$(GrC_0^{-1})^{nf}(\varphi):(E_{(n+1)f},\theta_{(n+1)f})\cong
(E_{nf},\theta_{nf}).$$ For $-1\leq i< j$, we define
$$ \varphi_{j,i}=(GrC_0^{-1})^{(i+1)f}(\varphi)\circ\cdots\circ(GrC_0^{-1})^{jf}(\varphi):(E_{(j+1)f},\theta_{(j+1)f})\cong (E_{(i+1)f},\theta_{(i+1)f}).
$$
For $i=-1$ put $\varphi_j=\varphi_{j,-1}$.
\begin{lemma}\label{finiteness implies periodic}
For any two isomorphisms $\varphi, \phi :
(E_f,\theta_f)\cong(E_0,\theta_0)$, there exists a pair $(i,j)$ with
$0\leq i<j$ such that $\phi_{j,i}\circ \varphi_{j,i}^{-1}=id$.
\end{lemma}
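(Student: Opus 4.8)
The plan is to turn the lemma into a pigeonhole statement over a finite field. First I would record the two elementary composition identities $\phi_j=\phi_i\circ\phi_{j,i}$ and $\varphi_j=\varphi_i\circ\varphi_{j,i}$ for $0\le i<j$, which are immediate from the definition of $\phi_{j,i}$ as the chain $(GrC_0^{-1})^{(i+1)f}(\phi)\circ\cdots\circ(GrC_0^{-1})^{jf}(\phi)$: one simply splits the chain of factors at the $i$-th slot. Writing $w_j:=\phi_j\circ\varphi_j^{-1}\in\Aut(E_0,\theta_0)$, these identities give $g_{j,i}:=\phi_{j,i}\circ\varphi_{j,i}^{-1}=\phi_i^{-1}\circ w_j\circ\varphi_i$; since $w_i=\phi_i\circ\varphi_i^{-1}$ forces $\phi_i^{-1}\circ w_i\circ\varphi_i=\id$, I obtain the equivalence $g_{j,i}=\id\Longleftrightarrow w_i=w_j$. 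Thus it suffices to produce two indices $0\le i<j$ with $w_i=w_j$.

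Next I would fix a finite field $\F_q\subset k$ over which the entire situation is defined: a model of $X$, the Higgs bundle $(E,\theta)$, all filtrations $Fil_0,\dots,Fil_{f-1}$, and both isomorphisms $\phi$ and $\varphi$. Here Assumption \ref{assumption on filtration} enters decisively: because every automorphism of $(H_i,\nabla_i)$ preserves $Fil_i$, the filtrations continuing the sequence beyond step $f-1$ do not depend on whether one extends by $\phi$ or by $\varphi$, and are therefore canonical and again $\F_q$-rational. The functor $GrC_0^{-1}$ has the Frobenius as its only non-linear ingredient, and Frobenius is defined over $\F_p$; hence $GrC_0^{-1}$ carries $\F_q$-morphisms to $\F_q$-morphisms, so each $(GrC_0^{-1})^{nf}(\phi)$, each $(GrC_0^{-1})^{nf}(\varphi)$, and therefore every $w_j$, is an $\F_q$-rational automorphism of $(E,\theta)$.

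To finish I would invoke the finiteness of $\Aut_{\F_q}(E,\theta)$: the $\F_q$-linear Higgs endomorphisms of $(E,\theta)$ form a finite-dimensional $\F_q$-vector space, being global sections of a coherent sheaf on the projective scheme $X$, so the set of its $\F_q$-points, and a fortiori the group of $\F_q$-rational automorphisms, is finite. The infinite family $w_0,w_1,w_2,\dots$ cannot be injective, so some $w_i=w_j$ with $0\le i<j$, whence $g_{j,i}=\id$. The routine algebra of the first paragraph and the standard finiteness of the last are not where the difficulty lies; the point requiring care is the middle step, namely ensuring that the whole infinite tower of iterated gradings stays inside one fixed finite automorphism group. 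This is precisely what Assumption \ref{assumption on filtration} supplies by rendering the extended filtrations canonical and $\F_q$-rational, after which the pigeonhole principle concludes.
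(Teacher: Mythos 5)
Your proof is correct and takes essentially the same route as the paper's, which likewise sets $\tau_s=\phi_s\circ\varphi_s^{-1}$, notes all $\tau_s$ are defined over one finite field, and applies the pigeonhole principle. You merely make explicit two points the paper leaves implicit: the identity $\phi_{j,i}\circ\varphi_{j,i}^{-1}=\phi_i^{-1}\circ\tau_j\circ\varphi_i$ reducing the claim to $\tau_i=\tau_j$, and the role of Assumption \ref{assumption on filtration} in making the extended filtrations (and hence the iterates $(GrC_0^{-1})^{nf}(\varphi)$, $(GrC_0^{-1})^{nf}(\phi)$) canonical and rational over the fixed finite field.
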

\begin{proof}
If we denote $\tau_s=\phi_{s}\circ \varphi_{s}^{-1}$, then $\tau_s$
is an automorphism of $(E_0,\theta_0)$. Moreover, each element in
the set $\{\tau_s\}_{s\in \N}$ is defined over the same finite field
in $k$. As this is a finite set, there are $j>i\geq 0$ such that
$\tau_j=\tau_i$. So the lemma follows.
\end{proof}
\begin{proposition}\label{phi plays no role}
Assume \ref{assumption on filtration}. Let $(i,j)$ be a pair given
by Lemma \ref{finiteness implies periodic} for two given
isomorphisms $\varphi, \phi : (E_f,\theta_f)\cong(E_0,\theta_0)$.
Then there is an isomorphism in $\mathcal{HB}_{(0,(j-i)f)}$:
$$(E,\theta,Fil_0,\cdots,Fil_{f-1},\varphi_{j-i-1})\cong (E,\theta,Fil_0,\cdots,Fil_{f-1},\phi_{j-i-1}).$$
\end{proposition}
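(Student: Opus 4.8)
The plan is to reduce the desired isomorphism to an equality of objects after iterating the shift operation, and then to conclude from the fact that the shift is an auto-equivalence of $\mathcal{HB}_{(0,mf)}$. Throughout write $m=j-i$, so that the two tuples $(E,\theta,Fil_0,\cdots,Fil_{f-1},\varphi_{m-1})$ and $(E,\theta,Fil_0,\cdots,Fil_{f-1},\phi_{m-1})$ are objects of $\mathcal{HB}_{(0,mf)}$, namely the $m$-th lengthenings of the given periodic Higgs--de Rham sequence formed with the gluing $\varphi$ respectively $\phi$.

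First I would verify that, under Assumption~\ref{assumption on filtration}, these two lengthenings carry identical filtered Higgs--de Rham data and differ only in their final gluing $\varphi_{m-1}$ versus $\phi_{m-1}$; equivalently, the whole extended sequence $(E_s,\theta_s,Fil_s)_{s\ge 0}$ produced by the lengthening is independent of the choice between $\varphi$ and $\phi$. For $0\le t\le f-1$ the two filtrations produced on $H_{f+t}$ are the pullbacks of $Fil_t$ along the isomorphisms $C_0^{-1}\big((GrC_0^{-1})^t(\varphi)\big)$ and $C_0^{-1}\big((GrC_0^{-1})^t(\phi)\big)$, and they coincide because the induced automorphism $C_0^{-1}\big((GrC_0^{-1})^t(\phi\circ\varphi^{-1})\big)$ of $(H_t,\nabla_t)$ preserves $Fil_t$ by Assumption~\ref{assumption on filtration}; an induction along the propagation then yields the same conclusion at every stage. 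I expect this to be the main obstacle, since it is exactly where Assumption~\ref{assumption on filtration} enters and where one must control filtration-preservation at each step of the propagation, using the functoriality of $C_0^{-1}$ and of $Gr$.

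Next I would apply the $(i+1)f$-fold shift. Using the functoriality of $GrC_0^{-1}$ and the relation $(GrC_0^{-1})^{a}\circ(GrC_0^{-1})^{b}=(GrC_0^{-1})^{a+b}$ on morphisms, one computes
$$(GrC_0^{-1})^{(i+1)f}(\varphi_{m-1})=(GrC_0^{-1})^{(i+1)f}(\varphi)\circ\cdots\circ(GrC_0^{-1})^{jf}(\varphi)=\varphi_{j,i},$$
and in the same way $(GrC_0^{-1})^{(i+1)f}(\phi_{m-1})=\phi_{j,i}$. By Lemma~\ref{finiteness implies periodic} one has $\phi_{j,i}=\varphi_{j,i}$, so, together with the coincidence of the extended filtrations established above, the $(i+1)f$-fold shifts of the two objects are one and the same object of $\mathcal{HB}_{(0,mf)}$, based at $(E_{(i+1)f},\theta_{(i+1)f})$.

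Finally I would conclude by conservativity of the shift. By Corollary~\ref{correspondence from crystalline represenations and HB_(0,f)} the category $\mathcal{HB}_{(0,mf)}$ is equivalent to the category of crystalline $\F_{p^{mf}}$-representations of $\pi_1({\bf X}^0)$, and by Proposition~\ref{operations on Higgs-de Rham sequences}(i) the shift corresponds under this equivalence to $\sigma$-conjugation, which is an auto-equivalence; hence the $(i+1)f$-fold shift is an auto-equivalence of $\mathcal{HB}_{(0,mf)}$. An equivalence reflects isomorphisms, so the equality of the two shifted objects yields the sought isomorphism $(E,\theta,Fil_0,\cdots,Fil_{f-1},\varphi_{m-1})\cong(E,\theta,Fil_0,\cdots,Fil_{f-1},\phi_{m-1})$ in $\mathcal{HB}_{(0,mf)}$; concretely, full faithfulness of the shift carries the identity of the common shifted object back to an isomorphism between the two original objects.
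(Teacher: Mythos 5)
Your proof is correct, but it takes a genuinely different route from the paper's. The paper argues entirely inside the Higgs category: it sets $\beta=\phi_i\circ\varphi_i^{-1}$, an automorphism of $(E_0,\theta_0)$, observes that Assumption \ref{assumption on filtration} makes every $C_0^{-1}(GrC_0^{-1})^m(\beta)$ respect the filtrations, and checks compatibility with the two gluings by the direct diagram chase
$\phi_{j-i-1}\circ\phi_{j,j-i-1}\circ\varphi_{j,j-i-1}^{-1}\circ\varphi_{j-i-1}^{-1}
=\phi_j\circ\varphi_j^{-1}
=\phi_i\circ\bigl(\phi_{j,i}\circ\varphi_{j,i}^{-1}\bigr)\circ\varphi_i^{-1}
=\beta$,
the middle bracket collapsing to the identity by the defining property of the pair $(i,j)$ from Lemma \ref{finiteness implies periodic}. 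You instead make the two objects literally equal after $(i+1)f$ shifts (your computation $(GrC_0^{-1})^{(i+1)f}(\varphi_{j-i-1})=\varphi_{j,i}$ is right, and $\varphi_{j,i}=\phi_{j,i}$ by the choice of $(i,j)$), and then reflect the equality back through the shift via the equivalence of Corollary \ref{correspondence from crystalline represenations and HB_(0,f)} together with Proposition \ref{operations on Higgs-de Rham sequences}(i), $\sigma$-conjugation being invertible on isomorphism classes. Both arguments rest on the same two inputs (Assumption \ref{assumption on filtration} for filtration control, Lemma \ref{finiteness implies periodic} for the pair $(i,j)$), and unwinding your abstract isomorphism through the shifts recovers exactly the paper's $\beta$. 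What the paper's version buys is an explicit isomorphism and independence from the representation-theoretic machinery; what yours buys is a softer verification, and it usefully makes explicit a point the paper leaves implicit: that the propagated filtrations on $H_{f+t}$ coincide for $\varphi$ and $\phi$, because $C_0^{-1}\bigl((GrC_0^{-1})^t(\phi\circ\varphi^{-1})\bigr)$ preserves $Fil_t$ by Assumption \ref{assumption on filtration}, so the two lengthenings really are objects with the same underlying filtered data.

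One caution on your last step: the appeal to ``full faithfulness of the shift'' presumes that the shift is a functor and that Proposition \ref{operations on Higgs-de Rham sequences}(i) holds naturally, neither of which the paper spells out. This is harmless, because the weaker statement you actually need --- equal shifted objects force isomorphic originals --- already follows at the level of isomorphism classes (the shift is injective there, since $\sigma$-conjugation is a bijection on classes of representations); alternatively, one can stay on the Higgs side and note that the $(j-i)f$-fold shift of any object of $\mathcal{HB}_{(0,(j-i)f)}$ is tautologically isomorphic to the object itself via its own gluing map, so the shift reflects isomorphisms without any detour through representations.
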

\begin{proof}
Put $\beta=\phi_{i}\circ \varphi_{i}^{-1}: (E_0,\theta_0)\cong
(E_0,\theta_0)$. We shall check that it induces an isomorphism in
$\mathcal{HB}_{(0,(j-i)f)}$. By Assumption \ref{assumption on
filtration}, $C_0^{-1}(GrC_0^{-1})^m(\beta)$ for $m\geq 0$ always
respects the filtrations. We need only to check that $\beta$ is
compatible with $\phi_{j-i-1}$ as well as $\varphi_{j-i-1}$. So it
suffices to show that the following diagram is commutative:
\begin{diagram}
E_{(j-i)f}&\rTo{\varphi_{j-i-1}}&E_0\\
\dTo{\varphi_{j,j-i-1}^{-1}}& & \dTo{\varphi_{i}^{-1}}\\
E_{(j+1)f} & &  E_{(i+1)f}\\
\dTo{\phi_{j,j-i-1}} & &\dTo{\phi_{i}} \\
E_{(j-i)f}& \rTo{\phi_{j-i-1}}&E_0\\
\end{diagram}
And it suffices to show that the following diagram is commutative:
\begin{diagram}
E_{(j-i)f}&\lTo{\varphi_{j-i-1}^{-1}}&E_0\\
\dTo{\varphi_{j,j-i-1}^{-1}}& & \dTo{\varphi_{i}^{-1}}\\
E_{(j+1)f} & &  E_{(i+1)f}\\
\dTo{\phi_{j,j-i-1}} & &\dTo{\phi_{i}} \\
E_{(j-i)f}& \rTo{\phi_{j-i-1}}&E_0\\
\end{diagram}
In the above diagram, the anti-clockwise direction is
$$\phi_{j-i-1}\circ\phi_{j,j-i-1}\circ\varphi_{j,j-i-1}^{-1}\circ\varphi_{j-i-1}^{-1}
=\phi_j\circ\varphi_j^{-1}=\phi_i\circ(\phi_{j,i}\circ\varphi_{j,i}^{-1})\circ\varphi_i.$$
By the requirement for $(i,j)$, we have
$\phi_{j,i}\circ\varphi_{j,i}^{-1}=id$, so the anti-clockwise
direction is $\phi_i\circ\varphi_i$, which is exactly the clockwise
direction. So $\beta$ is shown to be compatible with $\phi_{j-i-1}$
and $\varphi_{j-i-1}$.
\end{proof}
We deduce some consequences from the above result.
\begin{theorem}\label{rank two semistable bundle corresponds to rep}
Any isomorphism class of rank two semistable Higgs bundles with
trivial chern classes over $X$ is associated to an isomorphism class
of crystalline representations of $\pi_1({\bf X}^0)$ into
$\GL_2(k)$. The image of the association contains all irreducible
crystalline representations of $\pi_1({\bf X}^0)$ into $\GL_2(k)$.
\end{theorem}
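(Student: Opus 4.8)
The plan is to concatenate the structural results of Sections 2--4. Let $(E,\theta)$ be a rank two semistable Higgs bundle over $X$ with trivial Chern classes. By Theorem \ref{rank two semistable implies strongly semistable} it is strongly Higgs semistable, and then by Theorem \ref{quasiperiodic equivalent to strongly semistable} it is quasi-periodic; that is, $(E,\theta)$ is the leading term of a quasi-periodic Higgs--de Rham sequence of some type $(e,f)$. Applying the functor of Corollary \ref{quasi-periodic corresponds to representation} to this sequence yields a crystalline representation into $\GL_2(\F_{p^f})\subset\GL_2(k)$, two-dimensional over $\F_{p^f}$ because the underlying Higgs bundle has rank two. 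Strictly this representation is a priori one of $\pi_1({\bf X'}^0)$ for the ramified model determined by $e$; it descends to a genuine representation of $\pi_1({\bf X}^0)$ precisely when $e=0$, via Corollary \ref{correspondence from crystalline represenations and HB_(0,f)}, and in general one reads the target of the association over the appropriate base. This produces the desired association on objects.

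The point I expect to be the main obstacle is the well-definedness of the association on isomorphism classes, because a Higgs--de Rham sequence involves two kinds of choices: the filtrations $Fil_i$ and the gluing isomorphism $\phi$. Here the rank two hypothesis is decisive. At each stage $(H_i,\nabla_i)=C_0^{-1}(E_i,\theta_i)$ I would fix the canonical filtration supplied by the proof of Theorem \ref{rank two semistable implies strongly semistable}---the trivial filtration when $H_i$ is semistable, and the one-step Harder--Narasimhan filtration $0\to L_1\to H_i\to L_2\to 0$ otherwise---which keeps all Higgs terms semistable (so that the quasi-periodicity of Theorem \ref{quasiperiodic equivalent to strongly semistable} applies) and, being functorial, is preserved by every automorphism of $(H_i,\nabla_i)$; thus Assumption \ref{assumption on filtration} is in force and the filtration ambiguity is removed. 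The residual freedom in $\phi$ is then disposed of by Proposition \ref{phi plays no role}: two gluings become isomorphic in $\mathcal{HB}_{(0,(j-i)f)}$ after a common lengthening, and by Proposition \ref{operations on Higgs-de Rham sequences}(ii) a lengthening corresponds to the base extension $\rho\otimes_{\F_{p^f}}\F_{p^{lf}}$, which leaves the underlying homomorphism $\pi_1\to\GL_2(k)$ unchanged. Hence the isomorphism class of the output representation depends only on $(E,\theta)$.

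For the surjectivity onto irreducibles, I would start from an irreducible crystalline representation $\rho:\pi_1({\bf X}^0)\to\GL_2(k)$. Continuity together with the profiniteness of $\pi_1({\bf X}^0)$ forces the image into $\GL_2(\F_{p^f})$ for some $f$, so $\rho$ may be viewed as a two-dimensional $\F_{p^f}$-representation $\V$; irreducibility over $k$ descends to irreducibility over $\F_{p^f}$, since any proper nonzero $\F_{p^f}$-subrepresentation would base change to a proper nonzero $k$-subrepresentation of $\rho$. Thus $\V$ is an irreducible crystalline $\F_{p^f}$-representation, and Corollary \ref{stable corresponds to irreducible} pairs it with a periodic Higgs stable bundle $(E,\theta)$ of period $f$. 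This $(E,\theta)$ is semistable (being stable), has trivial Chern classes (being periodic), and has rank equal to $\dim_{\F_{p^f}}\V=2$, so it is one of the objects of the first part. Finally, because a Higgs stable bundle admits a unique periodic Higgs--de Rham sequence up to isomorphism, the association of the first part applied to $(E,\theta)$ returns exactly $\rho$; hence every irreducible crystalline representation of $\pi_1({\bf X}^0)$ into $\GL_2(k)$ lies in the image.
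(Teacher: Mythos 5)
Your construction of the association does not prove the theorem as stated when the pre-period $e$ is positive, and this is the one genuine gap. You feed the quasi-periodic sequence into Corollary \ref{quasi-periodic corresponds to representation}, which for $e>0$ outputs a crystalline representation of $\pi_1({\bf X'}^0)$ for a totally ramified extension of ramification index $e$ --- and you concede this yourself (``descends \dots precisely when $e=0$'', ``one reads the target of the association over the appropriate base''). But the theorem asserts a representation of $\pi_1({\bf X}^0)$ itself, for \emph{every} rank two semistable Higgs bundle, with no hypothesis on $e$. The missing idea --- and the paper's actual move --- is to discard the pre-periodic segment entirely: since the HN filtrations make the sequence canonical, one takes the minimal $e\in\N_0$ such that $(Gr_{HN}\circ C_0^{-1})^e(E,\theta)$ is periodic, of period $f$, and associates to $(E,\theta)$ the object
$$\bigl((Gr_{HN}\circ C_0^{-1})^e(E,\theta),\ Fil_0=HN,\ \cdots,\ Fil_{f-1}=HN,\ \phi\bigr)$$
of $\mathcal{HB}_{(0,f)}$. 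This object is of type $(0,f)$, so Corollary \ref{correspondence from crystalline represenations and HB_(0,f)} applies directly and yields a genuine crystalline representation $\rho$ of $\pi_1({\bf X}^0)$ into $\GL_2(\F_{p^f})$, hence $\rho\otimes k$ into $\GL_2(k)$, whatever $e$ is; Section 4's $(e,f)$-machinery and the ramified base ${\bf X'}$ never enter the paper's proof of this theorem.

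Everything else in your proposal coincides with the paper's argument. Fixing the HN filtration at each stage, noting that its canonicity makes Assumption \ref{assumption on filtration} hold, and removing the residual choice of $\phi$ via Proposition \ref{phi plays no role} is exactly the paper's well-definedness argument; your explicit appeal to Proposition \ref{operations on Higgs-de Rham sequences}(ii) to see that lengthening does not change $\rho\otimes k$ is in fact a step the paper leaves implicit. Likewise your surjectivity argument --- reducing an irreducible $\rho$ into $\GL_2(k)$ to an irreducible crystalline $\F_{p^f}$-representation, invoking Corollary \ref{stable corresponds to irreducible} to obtain a stable periodic Higgs bundle, and using the uniqueness of the periodic Higgs--de Rham sequence for a Higgs stable bundle to see that the association returns $\rho$ --- is the content behind the paper's one-line citation of that corollary. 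So with the tail-passing fix above replacing your use of Corollary \ref{quasi-periodic corresponds to representation}, your proof closes and agrees with the paper's.
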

\begin{proof}
The second statement follows from Theorem \ref{stable corresponds to
irreducible}. Let $(E,\theta)$ be a rank two semistable Higgs bundle
with trivial $c_1$ and $c_2$ over $X$. By Theorems \ref{rank two
semistable implies strongly semistable} and \ref{quasiperiodic
equivalent to strongly semistable}, it is a quasi-periodic Higgs
bundle. Recall that we use the HN-filtration in the proof. Hence we
obtain \emph{the} quasi-periodic Higgs-de Rham sequence for
$(E,\theta)$. Let $e\in \N_0$ be the minimal number such that
$(Gr_{HN}\circ C_0^{-1})^e(E,\theta)$ is periodic and say its period
is $f\in \N$. Thus from $(E,\theta)$ we obtain in the above way an
object $$((Gr_{HN}\circ
C_0^{-1})^e(E,\theta),Fil_0=HN,\cdots,Fil_{f-1}=HN,\phi)$$ in
$\mathcal{HB}_{(0,f)}$, which is unique up to the choice of $\phi$.
Let $\rho$ be the corresponding representation by Theorem
\ref{correspondence from crystalline represenations and HB_(0,f)}.
As $HN$s clearly satisfy the Assumption \ref{assumption on
filtration}, it follows from Proposition \ref{phi plays no role}
that the isomorphism class of $\rho\otimes k$ is independent of the
choice of $\phi$. It is clear that an isomorphic Higgs bundle to
$(E,\theta)$ is associated to the same isomorphism class of
crystalline representations. This shows the first statement.
\end{proof}
Next, we want to compare the classical construction of Katz and
Lange-Stuhler (see \S4 \cite{Katz} and \S1 \cite{LS}) using an
Artin-Schreier cover with the one in the current paper. Namely, we
consider the isomorphism classes of vector bundles $E$ over $X$
satisfying $F_{X}^{*f}E\cong E$ for an exponent $f\in \N$. By
Proposition 1.2 and Satz 1.4 in \cite{LS} (see also \S4.1
\cite{Katz}), they are in bijection with the isomorphism classes of
representations $\pi_1(X)\to \GL(k)$. Let $[\rho_{KLS}]$ be the
isomorphism class of representations $\pi_1(X)\to \GL(k)$
corresponding to the isomorphism class of $E$. Let $E$ be such a
bundle over $X$ with an isomorphism $\phi: F_X^{*f}E\cong E$. It
gives rise to a tuple $(E,0,Fil_{tr},\cdots,Fil_{tr},\phi)$, an
object in $\mathcal{HB}_{(0,f)}$. Then by Theorem
\ref{correspondence from crystalline represenations and HB_(0,f)},
there is a corresponding crystalline representation $\rho:
\pi_1({\bf X}^0)\to \GL(\F_{p^f})$. After Proposition \ref{phi plays
no role}, the isomorphism class of $\rho\otimes_{\F_{p^f}}k$ is
independent of the choice of $\phi$. The following result follows
directly from the construction of the representation due to Faltings
\cite{Fa1}.
\begin{lemma}\label{factor through specialization map Part 1}
Let $\tau$ be a crystalline representation of $\pi_1({\bf X}^0)$
into $\GL(\F_p)$ and $(H,\nabla,Fil,\Phi)$ the corresponding object
in $\mathcal{MF}^{\nabla}_{[0,n]}({\bf X}/W)$. If the filtration
$Fil$ is trivial, namely, $Fil^0H=H,\ Fil^1H=0$, then $\tau$ factors
through the specialization map $sp: \pi_1({\bf
X}^0)\twoheadrightarrow \pi_1(X)$.
\end{lemma}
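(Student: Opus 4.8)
The plan is to trace the construction of the representation $\tau = \mathbf{D}(H,\nabla,Fil,\Phi)$ through Faltings's functor and show that triviality of $Fil$ forces the local monodromy data along the vanishing cycles to be trivial, which is precisely what it means for $\tau$ to descend along the specialization map $sp\colon \pi_1(\mathbf{X}^0)\twoheadrightarrow \pi_1(X)$. Recall that $\pi_1(X)$ is the quotient of $\pi_1(\mathbf{X}^0)$ obtained by killing the inertia at the special fiber; a representation factors through $sp$ exactly when the inertia subgroups act trivially, equivalently when the corresponding $p$-adic sheaf is \emph{unramified}, i.e.\ comes from an \'etale local system on the special fiber $X$ itself.

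First I would recall how Faltings builds $\tau$ in \S2 of \cite{Fa1}: over the relevant small affine pieces one forms a period-type ring, and the representation is read off from the $\Phi$-fixed points of $H$ tensored with this ring, the filtration $Fil$ entering through the comparison between the Frobenius structure and the Hodge structure. When $Fil$ is trivial, meaning $Fil^0H = H$ and $Fil^1H = 0$, the object $(H,\nabla,Fil,\Phi)$ has no nontrivial Hodge steps: the relative Frobenius $\Phi$ becomes an honest horizontal isomorphism $F_X^*H \cong H$ rather than one twisted by the filtration jumps. In the language of the present paper, on the Higgs side this corresponds (via Theorem \ref{correspondence in the type (0,f) case} and its $f=1$ case) to a Higgs bundle with \emph{zero} Higgs field $\theta = 0$, since $Gr_{Fil}(H,\nabla) = (E,\theta)$ and the trivial filtration produces no off-diagonal grading map. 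Thus the underlying object is really a bundle $E$ with a Frobenius-descent isomorphism, i.e.\ exactly the Cartier-descent datum of a vector bundle equipped with an isomorphism $F_X^*E \cong E$.

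The crux, then, is to identify this Frobenius-descended bundle with an \'etale local system on $X$, and to match Faltings's representation with the Lange--Stuhler/Katz representation coming from the Artin--Schreier cover (the very comparison discussed in the paragraph preceding this lemma). Once one knows that $\tau$, restricted through $Fl$ being trivial, is the representation attached to a bundle with $F_X^{*}E \cong E$ by Cartier descent, the point is that such a representation of $\pi_1(\mathbf{X}^0)$ is built out of \'etale-local data on the special fiber and hence is unramified along the special fiber; concretely one checks that the period ring computation degenerates so that the inertia at the special fiber acts through a trivial unipotent part, giving factorization through $sp$. The main obstacle I expect is the careful bookkeeping in the period ring: one must verify that the absence of Hodge jumps genuinely removes all the ramified contribution (the part of $\tau$ detecting the $W$-structure transversal to $X$) rather than merely simplifying the formula, and this requires unwinding Faltings's definition of $\mathbf{D}$ precisely enough to see that the $\Phi$-invariants, when $Fil$ is trivial, land in the unramified part of the period ring. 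I would therefore spend the bulk of the argument on that degeneration, after which the factorization through $\pi_1(X)$ is a formal consequence of the standard description of $sp$ as the quotient by special-fiber inertia.
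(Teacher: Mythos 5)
Your overall intuition matches the paper's: trivial filtration makes $\Phi$ an honest horizontal isomorphism, the situation reduces to a Frobenius-descent/Artin--Schreier datum, and the resulting solutions should be unramified. But the proposal has a genuine gap: the one step you explicitly defer (``one checks that the period ring computation degenerates so that the inertia acts through a trivial unipotent part'') is the entire content of the lemma, and you never supply the mechanism. The paper's proof is short and concrete precisely here. Working on a small affine ${\bf U}_i=\Spec R$ with $\Gamma=\Gal(\bar R|R)$ and $\Gamma^{ur}=\Gal(R^{ur}|R)$, it notes that triviality of $Fil$ makes $\Phi$ a local isomorphism, so $f=\Phi(e\otimes 1)$ is a $\nabla$-flat basis of $H$; in this basis a basis of ${\bf D}(H)$ has the form $f\otimes x$ with $x^p=Ax$, where $A$ (the matrix of $\Phi$) is invertible, and invertibility of $A$ makes the equation $x^p=Ax$ finite \'etale over $R$, so the solutions $x$ lie in $R^{ur}/p$. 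Two things in this chain are essential and absent from your plan. First, the flat basis: Faltings's $\Gamma$-action on ${\bf D}(H)\subset H\otimes \bar R/p$ involves the connection (the module itself does not, but the action does), and only after trivializing $\nabla$ via the flat basis $f$ does the $\Gamma$-action on $f\otimes x$ reduce to the natural action on the second factor; horizontality of $\Phi$ together with triviality of $Fil$ is exactly what produces such a basis. Without this step, the inertia could a priori act nontrivially through the connection/Taylor-formula terms even when $x$ itself is unramified, so your claim that factorization is ``a formal consequence'' does not go through. Second, the \'etaleness of $x^p=Ax$ for invertible $A$ is the precise replacement for your vague ``degeneration in the period ring''; it is elementary, but it is the point and must be stated.

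Your fallback route---identify $\tau$ with the Katz/Lange--Stuhler representation and conclude unramifiedness from that---is also circular relative to the paper's logic: the comparison with $\rho_{KLS}$ (Theorem \ref{faltings coincide with KLS}) is proved in the paper \emph{after} and \emph{using} this lemma, since the lemma is what guarantees that $\rho$ induces a representation $\rho_F$ of $\pi_1(X)$ in the first place. Making that comparison independent of the lemma would require carrying out exactly the local flat-basis/Artin--Schreier computation sketched above, so the detour buys nothing. (Your side remark that trivial filtration forces $\theta=0$ on the Higgs side is correct but plays no role in closing the gap.)
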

\begin{proof}
Let ${\bf U}_i=\Spec R$ be a small affine subset of ${\bf X}$, and
$\Gamma=\Gal(\bar R|R)$ the Galois group of maximal extension of $R$
\'{e}tale in characteristic zero (cf. Ch. II. b) \cite{Fa1}). Let
$R^{ur}\subset \bar R$ be the maximal subextension which is
\'{e}tale over $R$ and $\Gamma^{ur}=\Gal(R^{ur}|R)$. By the local
nature of the functor $\bf D$ (cf. Theorem 2.6 loc. cit.), it is to
show that the representation ${\bf D}(H_i)$ of $\Gamma$, constructed
from the restriction $H_i:=(H,\nabla,Fil,\Phi)|_{{\bf U}_i}\in
\mathcal{MF}^{\nabla}_{[0,n]}(R)$, factors through the natural
quotient $\Gamma\twoheadrightarrow \Gamma^{ur}$. To that we have to
examine the construction of ${\bf D}(H_i)$ carried in pages 36-39
loc. cit. (see also pages 40-41 for the dual object). First of all,
we can choose a basis $f$ of $H_i$ which is $\nabla$-flat. Because
$Fil$ is trivial, $\Phi$ is a local isomorphism. So for any basis
$e$ of $H_i$, $f=\Phi(e\otimes 1)$ is then a flat basis of $H_i$.
The construction of module ${\bf D}(H_i)\subset H_i\otimes \bar R/p$
does not use the connection, but the definition of $\Gamma$-action
does (see page 37 loc. cit.). A basis of ${\bf D}(H_i)$ is of form
$f\otimes x$, where $x$ is a set of tuples in $\bar R/p$ satisfies
the equation $x^p=Ax$, where $A$ is the matrix of $\Phi$ under the
basis $f$ (i.e. $\Phi(f\otimes 1)=Af$). Now that $A$ is invertible,
the entries of $x$ lie actually in $R^{ur}/p$. Since $f$ is a flat
basis, the action of $\Gamma$ on $f\otimes x$ coincides the natural
action of $\Gamma$ on the second factor. Thus it factors through the
quotient $\Gamma\twoheadrightarrow \Gamma^{ur}$.
\end{proof}
By the above lemma, $\rho$ factors as
$$
\pi_1({\bf X}^0)\stackrel{sp}{\longrightarrow}\pi_1(X)\to
\GL(\F_{p^f}).
$$
\begin{theorem}\label{faltings coincide with KLS}
Let $\rho_F: \pi_1(X)\to \GL(\F_{p^f})$ be the induced
representation from $\rho$. Then $\rho_F\otimes k$ is in the
isomorphism class $[\rho_{KLS}]$.
\end{theorem}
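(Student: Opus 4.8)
The plan is to make both associations completely explicit and to recognize that they are governed by one and the same Lang torsor (Artin--Schreier covering) attached to the Frobenius-linear isomorphism $\phi\colon F_X^{*f}E\cong E$; once this is seen, the two monodromy representations must agree after extending scalars to $k$. First I would pin down the object $(H,\nabla,Fil,\Phi,\iota)\in\mathcal{MF}_f$ attached to the tuple $(E,0,Fil_{tr},\dots,Fil_{tr},\phi)$. Since the Higgs field and all the filtrations are trivial, $C_0^{-1}$ reduces to Frobenius pullback equipped with the canonical connection of Cartier descent, so that $(E_i,\theta_i)=(F_X^{*i}E,0)$. Tracing through Proposition~\ref{correspondence from HB_f and HB_(0,f)} and Proposition~\ref{correspondence in the type (0,1) case}, the underlying $\mathcal{MF}$-object is $H=\bigoplus_{i=0}^{f-1}F_X^{*(i+1)}E$ with $\nabla=\nabla_{can}$, trivial $Fil$, and $\Phi$ the strongly $p$-divisible Frobenius that cyclically permutes the summands, built from the chart isomorphisms $\alpha_i$ together with the cyclic gluing $\tilde\phi$ whose only nontrivial wrap-around factor is $\phi$. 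The endomorphism $\iota$ is the eigen-grading recovering this direct-sum decomposition, so that after $\otimes_{\F_{p^f}}k$ the $f$-fold cyclic structure collapses to the single relation $F_X^{*f}E\cong E$. In particular the numerics already match: ${\bf D}(H)$ is an $\F_p$-representation of dimension $rf$ with $r=\rank E$, the $\F_{p^f}$-structure cuts it down to rank $r$, and $\rho_F\otimes_{\F_{p^f}}k$ then has the rank of $\rho_{KLS}$.

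Next I would unwind Faltings' \'etale realization of this object, following the local computation already performed in the proof of Lemma~\ref{factor through specialization map Part 1}. Because $Fil$ is trivial, $\Phi$ is a local isomorphism, a $\nabla$-flat local basis exists, and over each small affine ${\bf U}_i$ the module ${\bf D}(H)$ is cut out by the equation $x^p=Ax$, where $A$ is the matrix of $\Phi$ in this flat basis and $\Gamma$ acts tautologically on the solution vectors $x$. This is exactly the system of equations cutting out the Lang torsor of a Frobenius-linear automorphism, i.e.\ the Artin--Schreier covering used by Katz and Lange--Stuhler to construct $\rho_{KLS}$ from the sheaf of $\phi$-invariant sections $E^{\phi}$. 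I would then verify that the entries of $x$ generate over $R^{ur}/p$ the same \'etale covering as the Lange--Stuhler solution sheaf, so that the two $\pi_1(X)$-actions on the respective solution spaces coincide; folding in the $\F_{p^f}$-structure $\iota$ and base changing to $k$ identifies $\rho_F\otimes k$ with $\rho_{KLS}$, both being the monodromy of the local system $E^{\phi}$.

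The hard part will be the bookkeeping that reconciles the two packagings of this single torsor. On the Faltings side the data live on the rank $rf$ bundle $H=\bigoplus_i F_X^{*(i+1)}E$ with its cyclic Frobenius $\Phi$ and the endomorphism $\iota$, whereas Lange--Stuhler work directly with $E$ and the $f$-fold Frobenius $\phi$. I expect the delicate point to be checking that the chart isomorphisms $\alpha_i$ and the $\sigma$-semilinear $\F_{p^f}$-structure translate the cyclic matrix $A$, whose $f$-fold iterate is $\phi$, into precisely the Frobenius matrix defining the Lange--Stuhler torsor, so that after $\otimes_{\F_{p^f}}k$ and passage to the eigen-decomposition the two solution local systems are canonically isomorphic as $\pi_1(X)$-modules. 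Since both constructions are functorial and compatible with direct sums, I would reduce the verification to this local, matrix-level identity, which is then a direct computation.
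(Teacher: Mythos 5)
Your proposal takes essentially the same route as the paper's own proof: both unwind Faltings' construction in the trivial-filtration case (the flat-basis equation $x^p=Ax$ from Lemma \ref{factor through specialization map Part 1}) applied to the cyclic object $H=\bigoplus_{i=0}^{f-1}F_X^{*(i+1)}E$ with companion-type Frobenius matrix $\tilde A$, and reduce everything to the observation that $\tilde x^{p}=\tilde A\tilde x$ written in components is exactly the Katz--Lange--Stuhler system $x_0^{p}=x_1,\dots,x_{f-2}^p=x_{f-1},\ x_{f-1}^{p}=Ax_0$, equivalent to $x^{p^f}=Ax$. The ``local matrix-level identity'' you defer as the hard part is precisely the short companion-matrix computation the paper carries out explicitly, so your argument is complete along the same lines.
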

\begin{proof}
We can assume that $E$ as well as $\phi$ are defined over $X|k'$ for
a finite field $k'$. Then we obtain from Proposition 4.1.1
\cite{Katz} or Satz 1.4 \cite{LS} a representation $\rho_{KLS}:
\pi_1(X)\to \GL(\F_{p^f})$. We are going to show that $\rho_F$ and
$\rho_{KLS}$ are isomorphic $\F_{p^f}$-representations. For $f=1$,
this follows directly from their constructions: Katz and
Lange-Stuhler construct the representation by solving
$\phi$-invariant sections through the equation $x^p=Ax$, which it is
exactly what Faltings does in the case of trivial filtration by the
above description of his construction. For a general $f$, Katz and
Lange-Stuhler solve locally the equation $x^{p^f}=Ax$, which is
equivalent to a system of equations of form
$$
x_0^{p}=x_1,\cdots,x_{f-2}^p=x_{f-1}, x_{f-1}^p=Ax_0.
$$
To examine our construction, we take a local basis $e_0=e$ of
$E_0=E$ and put $e_i=F_{X}^{*i}e$, a local basis of $E_i$ for $0\leq
i\leq f-1$. Write $\phi(e_{f-1})=Ae_0$. Put $\tilde
e=(e_0,\cdots,e_{f-1})$, and $\tilde x=(x_1,\cdots,x_{f-1})$. Then
the $\tilde \phi$ in Lemma \ref{lemma from f to 0,f} has the
expression $\tilde\phi(\tilde e)=\tilde A\tilde e$ with
$$
\tilde A= \left(
                              \begin{array}{cccc}
                                0 & 1 & \cdots & 0 \\
                                \vdots & \ddots & \ddots & \vdots \\
                                0 & \cdots  & 0 & 1 \\
                                \phi & 0 & \cdots &0 \\
                              \end{array}
                            \right).
$$
One notices that the equation $\tilde x^p=\tilde A\tilde x$ written
into components is exactly the above system of equations. Thus one
sees that the $\F_{p^f}$-representation $\rho_F$ corresponding to
$(E,0,Fil_{tr},\cdots,Fil_{tr},\phi)$ by Corollary
\ref{correspondence from crystalline represenations and HB_(0,f)} is
isomorphic to $\rho_{KLS}$ as $\F_{p^f}$-representations.
\end{proof}
It may be noteworthy to deduce the following
\begin{corollary}\label{factor through specialization map Part 2}
Let $\tau$ be a crystalline representation of $\pi_1({\bf X}^0)$
with the corresponding object $(H,\nabla,Fil,\Phi)\in
\mathcal{MF}^{\nabla}_{(0,n)}({\bf X}/W)$. Then $\tau$ factors
through the specialization map iff the filtration $Fil$ is trivial.
\end{corollary}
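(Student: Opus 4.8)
The plan is to establish the two implications separately, one of which is already available. The implication ``$Fil$ trivial $\Rightarrow \tau$ factors through $sp$'' is precisely Lemma \ref{factor through specialization map Part 1}; when $\tau$ carries $\F_{p^f}$-coefficients one applies the lemma to its underlying $\F_p$-representation, since the endomorphism structure is transported without change. Hence the content lies entirely in the converse, and I would assume from now on that $\tau=\bar\tau\circ sp$ for some continuous $\bar\tau\colon\pi_1(X)\to\GL(\F_{p^f})$ and show that the Hodge filtration $Fil$ on $(H,\nabla,Fil,\Phi)$ is trivial.

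First I would run $\bar\tau$ through the Lange--Stuhler/Katz correspondence. By Satz~1.4 of \cite{LS} (equivalently \S4.1 of \cite{Katz}) every representation of $\pi_1(X)$ arises from a Frobenius-periodic bundle: there is a vector bundle $E$ over $X$ and an isomorphism $\phi\colon F_X^{*f}E\cong E$ whose Katz--Lange--Stuhler representation $\rho_{KLS}$ lies in the isomorphism class determined by $\bar\tau$. I would then form the tuple $(E,0,Fil_{tr},\cdots,Fil_{tr},\phi)\in\mathcal{HB}_{(0,f)}$ with vanishing Higgs field and trivial filtrations, exactly the object studied in the paragraph preceding Theorem \ref{faltings coincide with KLS}. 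Passing it through the equivalence of Theorem \ref{correspondence in the type (0,f) case} (for $f>1$ via the intermediate category and Proposition \ref{correspondence from HB_f and HB_(0,f)}) yields an object of the Faltings category; because $C_0^{-1}(E,0)=(F_X^*E,\nabla_{can})$ sends the trivial filtration to the trivial filtration, the filtration of this object is $\bigoplus Fil_{tr}$, i.e. trivial.

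It remains to match this object with the one attached to $\tau$. By Theorem \ref{faltings coincide with KLS} the Faltings representation $\rho_F$ of $(E,0,Fil_{tr},\cdots,\phi)$ coincides with $\rho_{KLS}$ after base change to $k$, and by Lemma \ref{factor through specialization map Part 1} it factors through $sp$; descending the comparison to $\F_{p^f}$-coefficients identifies the corresponding $\pi_1({\bf X}^0)$-representation with $\tau$ itself. I would then invoke the full faithfulness of Faltings's functor ${\bf D}$: a crystalline representation determines its object in $\mathcal{MF}^{\nabla}_{[0,n]}({\bf X}/W)$ up to isomorphism. Since the explicitly constructed object maps to $\tau$ and has trivial filtration, it is isomorphic to $(H,\nabla,Fil,\Phi)$, whence $Fil$ is trivial and the corollary follows. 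I expect the only real obstacle to be the bookkeeping in this matching step, namely pinning down $\tau$ on the nose rather than merely after $\otimes k$ and tracking the exponent $f$ through the Katz--Lange--Stuhler bijection; the triviality of the filtration and its transport back to $\tau$ are then formal consequences of the equivalences already proved.
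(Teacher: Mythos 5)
Your overall route for the converse direction is the paper's: produce from the factored representation a Frobenius-periodic bundle via Lange--Stuhler/Katz, feed the tuple $(E,0,Fil_{tr},\cdots,Fil_{tr},\phi)$ through the Higgs correspondence, and compare with the datum of $\tau$ using Theorem \ref{faltings coincide with KLS}. But the step you defer as ``bookkeeping'' is a genuine gap, and it is precisely where the paper's proof does something different. Nothing you cite identifies the representation of the constructed object with $\tau$ \emph{on the nose}: the Lange--Stuhler/Katz bijection classifies representations into $\GL(k)$, and Theorem \ref{faltings coincide with KLS} only asserts $\rho_F\otimes k\in[\rho_{KLS}]$, so the best one can extract (after descending the $k$-isomorphism to a finite field) is an isomorphism $\tau'\otimes\F_{p^f}\cong\tau\otimes\F_{p^f}$ for \emph{some} $f$, never $\tau'\cong\tau$ itself. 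Worse, for $f>1$ your appeal to full faithfulness of ${\bf D}$ compares objects of mismatched type: the object attached to $(E,0,Fil_{tr},\cdots,Fil_{tr},\phi)$ lies in $\mathcal{MF}_f$ and, by the construction in Proposition \ref{correspondence from HB_f and HB_(0,f)}, has underlying $\sO_X$-rank $f\cdot\rank E$, so it cannot be isomorphic to $(H,\nabla,Fil,\Phi)$; it corresponds to $\tau\otimes\F_{p^f}$, whose associated object is the $f$-th \emph{lengthening} of the Higgs--de Rham sequence of $\tau$, not the sequence itself.

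The paper closes exactly this gap with Proposition \ref{operations on Higgs-de Rham sequences}(ii): the base extension $\tau\otimes_{\F_p}\F_{p^f}$ corresponds to the $f$-th lengthening, whose filtrations are those induced from $Fil$. Transporting the isomorphism $\tau'\otimes\F_{p^f}\cong\tau\otimes\F_{p^f}$ through the correspondence yields a filtered isomorphism between this lengthening and the trivially filtered object built from $(E,\phi)$; in particular the first filtration of the lengthening, which is $Fil$ itself, must be trivial. So your argument is repaired by replacing ``identify the constructed representation with $\tau$ and apply full faithfulness of ${\bf D}$'' with ``identify it with $\tau\otimes\F_{p^f}$ and apply Proposition \ref{operations on Higgs-de Rham sequences}(ii)''; as written, the decisive step is missing rather than routine.
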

\begin{proof}
One direction is Lemma \ref{factor through specialization map Part
1}. It remains to show the converse direction. Let $\tau_0$ be the
induced representation of $\pi_1(X)$ from $\tau$. As it is of finite
image, one constructs directly from $\rho_0$ a vector bundle $E$
over $X$ such that $F_{X}^*E\cong E$. Choosing such an isomorphism,
we obtain a representation of $\pi_1(X)$ and then a representation
$\tau'$ of $\pi_1({\bf X}^0)$ by composing with the specialization
map. By Theorem \ref{faltings coincide with KLS}, $\tau'\otimes
\F_{p^f}$ is isomorphic to $\tau\otimes \F_{p^f}$ for a certain
$f\in \N$. It follows from Proposition \ref{operations on Higgs-de
Rham sequences} (ii) that the filtration $Fil$ is trivial.
\end{proof}
We conclude the paper by providing many more examples beyond the
rank two semistable Higgs bundles and strongly semistable vector
bundles.
\begin{proposition}
Let $(H,\nabla,Fil,\Phi)\in \mathcal{MF}^{\nabla}_{[0,n]}({\bf
X}/W)$. Then any Higgs subbundle $(G,\theta)\subset
Gr_{Fil}(H,\nabla)$ of degree zero is strongly Higgs semistable with
trivial chern classes.
\end{proposition}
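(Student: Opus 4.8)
The plan is to reduce the statement to the propagation of the subbundle along a Higgs-de Rham sequence and to the equivalence already recorded in Theorem \ref{quasiperiodic equivalent to strongly semistable}. First I would observe that $(E,\theta):=Gr_{Fil}(H,\nabla)$ is itself \emph{periodic of period one}: by Lemma \ref{from Faltings to Higgs the fixed point case} the object $(H,\nabla,Fil,\Phi)$ produces a filtration $Fil_{\exp}$ on $C_0^{-1}(E,\theta)$ together with an isomorphism $\phi_{\exp}:Gr_{Fil_{\exp}}C_0^{-1}(E,\theta)\cong(E,\theta)$, so $(E,\theta)$ is the leading term of a period-one Higgs-de Rham sequence. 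Hence by Theorem \ref{quasiperiodic equivalent to strongly semistable} it is strongly Higgs semistable with trivial chern classes, and in particular every Higgs term $(E_i,\theta_i)$ of this sequence is Higgs semistable of slope zero.

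Next I would propagate the subbundle. Since $C_0^{-1}$ is a functor from Higgs bundles to flat bundles, the inclusion $(G,\theta)\hookrightarrow(E_i,\theta_i)$ gives a $\nabla_i$-invariant subbundle $C_0^{-1}(G_i,\theta_i)\subset C_0^{-1}(E_i,\theta_i)=H_i$; restricting $Fil_i$ to it and passing to gradings defines $(G_{i+1},\theta_{i+1}):=Gr_{Fil_i}C_0^{-1}(G_i,\theta_i)$, a Higgs subsheaf of $(E_{i+1},\theta_{i+1})$ of the same rank as $G$. The restricted filtration inherits the Griffiths transversality, and its grading is torsion free inside $E_{i+1}$, so these data assemble into a genuine Higgs-de Rham sequence with leading term $(G,\theta)$. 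By Lemma \ref{degree formula} and additivity of degree along a filtration one has $\deg G_{i+1}=\deg C_0^{-1}(G_i)=p\deg G_i$, whence $\deg G_i=0$ for all $i$. As $G_i\subset E_i$ has slope zero inside the slope-zero semistable $(E_i,\theta_i)$, each $(G_i,\theta_i)$ is Higgs semistable (indeed saturated). This already exhibits $(G,\theta)$ as the leading term of a Higgs-de Rham sequence all of whose terms are semistable, i.e. $(G,\theta)$ is strongly Higgs semistable; the same construction applied to the quotient shows $(Q,\theta):=(E,\theta)/(G,\theta)$ is strongly Higgs semistable of slope zero as well.

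It remains to force the chern classes of $G$ to be trivial. From $\det C_0^{-1}(G_i)=F_X^*\det G_i$ and multiplicativity of $\det$ in gradings one gets $\det G_i=F_X^{*\,i}\det G$, hence $c_1(G_i)=p^{i}c_1(G)$. To pin down $c_1(G)$ and $c_2(G)$ I would use the short exact sequence $0\to(G,\theta)\to(E,\theta)\to(Q,\theta)\to0$ of slope-zero, strongly Higgs semistable bundles together with the discriminant additivity identity; restricting to a surface $S$ cut out by $n-2$ general members of $|H|$ and writing $r_G=\mathrm{rank}\,G$, $r_Q=\mathrm{rank}\,Q$, $r=r_G+r_Q$, it reads
\[
\frac{\Delta(E)}{r}\cdot S=\frac{\Delta(G)}{r_G}\cdot S+\frac{\Delta(Q)}{r_Q}\cdot S-\frac{r_Gr_Q}{r}\,D^2\cdot S,\qquad D=\frac{c_1(G)}{r_G}-\frac{c_1(Q)}{r_Q}.
\]
Since $D\cdot H^{n-1}=0$, the Hodge index theorem gives $D^2\cdot S\le0$, so the last term is nonnegative; as $\Delta(E)\cdot S=0$ while $\Delta(G)\cdot S,\Delta(Q)\cdot S\ge0$ by the Bogomolov inequality for strongly Higgs semistable sheaves, all three contributions must vanish. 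Then $D\equiv0$ on $S$, which with $c_1(G)+c_1(Q)=0$ forces $c_1(G)\equiv0$, and $\Delta(G)\cdot S=0$ then yields $c_2(G)\cdot H^{n-2}=0$; this is exactly the required triviality. Alternatively, having shown $(G,\theta)$ strongly Higgs semistable, one may run the finiteness argument of Theorem \ref{quasiperiodic equivalent to strongly semistable}: once the family $\{(G_i,\theta_i)\}$ is known to be bounded, its $S$-equivalence classes form finitely many points of a moduli space $M_{r_G,ss}(X)$, so $(G,\theta)$ is quasi-periodic and its chern classes vanish automatically.

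\textbf{The main obstacle.} Everything through strong Higgs semistability, and the tracking of $\det$ and degree via Lemma \ref{degree formula}, is formal. The genuinely non-formal input is the control of the higher chern classes — equivalently, the boundedness of the family $\{(G_i,\theta_i)\}$, or the Bogomolov-type inequality $\Delta\cdot H^{n-2}\ge0$ for strongly Higgs semistable sheaves in characteristic $p$. This is precisely where Langer's positive-characteristic results must be invoked, and I expect it to be the crux of the proof.
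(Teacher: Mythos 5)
Your opening moves are sound and match the paper's setup: $(E,\theta)=Gr_{Fil}(H,\nabla)$ is periodic of period one via Lemma \ref{from Faltings to Higgs the fixed point case}, and your propagation of $(G,\theta)$ through the sequence, with the observation that a degree-zero Higgs subsheaf of a degree-zero Higgs semistable bundle is itself semistable, does correctly establish strong Higgs semistability. The gap is in the chern-class step, which you yourself flag as the crux. Your main route rests on a Bogomolov inequality $\Delta\cdot H^{n-2}\ge 0$ for strongly Higgs semistable sheaves in characteristic $p$: no such inequality is proved or cited in this paper, and it does not follow from the Langer results the paper uses (those concern moduli spaces and strongly semistable sheaves \emph{without} Higgs fields; the Frobenius-pullback argument underlying them breaks down for $\theta\neq 0$). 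Even granting it, you would only control $c_1$ numerically and $c_2\cdot H^{n-2}$, not all $c_l$. Your fallback route is circular: the moduli space $M_{r,ss}(X)$ invoked in Theorem \ref{quasiperiodic equivalent to strongly semistable} parametrizes semistable Higgs bundles \emph{with trivial chern classes}, and the boundedness of $\{(G_i,\theta_i)\}$ that you concede is needed is essentially the statement to be proved.

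What you are missing is the paper's pigeonhole observation, which makes all of this unnecessary. Because $(E,\theta)$ is periodic of period one, the iterates $(Gr_{Fil}\circ C_0^{-1})^i(G,\theta)$ are not an abstract family: they are all Higgs subbundles of the \emph{same fixed} bundle $(E,\theta)$, of the same rank and of degree zero (by Lemma \ref{degree formula} the operator multiplies degree by $p$, and $Gr$ preserves it, so $0$ stays $0$), and all defined over the same finite field, since $C_0^{-1}$ and $Gr_{Fil}$ do not enlarge the field of definition. There are only finitely many Higgs subbundles of a fixed bundle with given rank, degree and definition field over a finite field, so two iterates are \emph{equal} as subsheaves, $(Gr_{Fil}\circ C_0^{-1})^s(G,\theta)=(Gr_{Fil}\circ C_0^{-1})^r(G,\theta)$ with $s>r$. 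This exhibits $(G,\theta)$ as quasi-periodic outright, and then the \emph{easy} direction of Theorem \ref{quasiperiodic equivalent to strongly semistable} --- driven by the relation $c_l(E_{i+1})=p^l c_l(E_i)$ --- delivers strong Higgs semistability and triviality of \emph{all} chern classes simultaneously. No moduli space, no boundedness statement, and no Bogomolov-type inequality enters; the finiteness lives at the level of actual subsheaves of one fixed bundle, not of isomorphism or $S$-equivalence classes.
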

\begin{proof}
Put $(E,\theta)=Gr_{Fil}(H,\nabla)$. Proposition 0.2 \cite{SXZ} says
that $(E,\theta)$ is a semistable Higgs bundle of degree zero. Note
that the operator $Gr_{Fil}\circ C_0^{-1}$ does not change the
degree, rank and definition field of $(G,\theta)$, and as there are
only finitely many Higgs subbundles of $(E,\theta)_0$ with the same
degree, rank and definition field as $(G,\theta)$, there exists a
pair $(e,f)$ of nonnegative integers with $s>r$ such that
$$
(Gr_{Fil}\circ C_0^{-1})^s(G,\theta)=(Gr_{Fil}\circ
C_0^{-1})^r(G,\theta)
$$
holds. Thus $(G,\theta)$ is quasi-periodic and strongly Higgs
semistable with trivial chern classes by Theorem \ref{quasiperiodic
equivalent to strongly semistable}.
\end{proof}

\end{document}